\tikzset{commutative diagrams/arrow style=Latin Modern}
\setlist[enumerate]{
  label=(\thethm.\arabic*),
  before={\setcounter{enumi}{\value{equation}}},
  after={\setcounter{equation}{\value{enumi}}},
  labelindent=0cm,
  leftmargin=*,
  widest=iiii,
  itemsep=1ex
}
\setlist[itemize]{
  leftmargin=*,
  itemsep=1ex,
  label=$\circ$
}
\newtheorem*{thm-plain}{Theorem}
\newtheorem{thm}{Theorem}[section]
\newtheorem{lem}[thm]{Lemma}
\newtheorem{prp}[thm]{Proposition}
\newtheorem{cor}[thm]{Corollary}
\newtheorem*{boi}{One Sacrilegious Conjecture}
\newtheorem{ques}[thm]{Question}
\numberwithin{equation}{thm}
\newtheorem{thmalph}{Theorem}
\theoremstyle{definition}
\newtheorem{dfn}[thm]{Definition}
\newtheorem{prpdfn}[thm]{Proposition/Definition}
\newtheorem*{dfn-plain}{Definition}
\theoremstyle{remark}
\newtheorem{clm}[thm]{Claim}
\newtheorem{awlog}[thm]{Additional Assumption}
\newtheorem{ntn}[thm]{Notation}
\newtheorem{rem}[thm]{Remark}
\newtheorem{exm}[thm]{Example}
\newtheorem*{rem-plain}{Remark}
\newcommand{\inv}{^{-1}}
\newcommand{\from}{\colon}
\newcommand{\gdw}{\ensuremath{\Leftrightarrow}}
\newcommand{\imp}{\ensuremath{\Rightarrow}}
\newcommand{\lto}{\longrightarrow}
\newcommand{\x}{\times}
\newcommand{\inj}{\hookrightarrow}
\newcommand{\isom}{\cong}
\newcommand{\defn}{\coloneqq}
\newcommand{\tensor}{\otimes}
\newcommand{\id}{\mathrm{id}}
\newcommand{\wt}{\widetilde}
\newcommand{\wb}{\overline}
\renewcommand{\d}{\mathrm d}
\newcommand{\ddual}{^{\smash{\scalebox{.7}[1.4]{\rotatebox{90}{\textup\guilsinglleft} \hspace{-.5em} \rotatebox{90}{\textup\guilsinglleft}}}}}
\newcommand{\factor}[2]{\left. \raise 2pt\hbox{$#1$} \right/\hskip -2pt \raise -2pt\hbox{$#2$}}
\DeclareMathOperator{\coker}{coker}
\DeclareMathOperator{\Sym}{Sym}
\newcommand{\set}[1]{\left\{ #1 \right\}}
\def\rd#1.{\lfloor{#1}\rfloor}
\def\rp#1.{\lceil{#1}\rceil}
\def\tw#1.{\langle{#1}\rangle}
\renewcommand{\O}[1]{\mathscr{O}_{#1}}
\newcommand{\Omegap}[2]{\Omega_{#1}^{#2}}
\newcommand{\Omegar}[2]{\Omega_{#1}^{[#2]}}
\newcommand{\Omegal}[3]{\Omega_{#1}^{#2} \big( \!\log #3 \big)}
\newcommand{\Omegarl}[3]{\Omega_{#1}^{[#2]} \big( \!\log #3 \big)}
\newcommand{\can}[1]{\omega_{#1}}
\newcommand{\Reg}[1]{{#1}_{\mathrm{reg}}}
\newcommand{\Sing}[1]{{#1}_{\mathrm{sg}}}
\newcommand{\snc}[1]{{#1}_{\mathrm{snc}}}
\newcommand{\codim}[2]{\mathrm{codim}_{#1}(#2)}
\newcommand{\Diff}[2]{\mathrm{Diff}_{#1}(#2)}
\def\Hnought#1.#2.{\mathit{\Gamma} \!\left( #1, #2 \right)}
\def\HH#1.#2.#3.{\mathrm{H}^{#1} \!\left( #2, #3 \right)}
\def\euler#1.#2.{\chi \!\left( #1, #2 \right)}
\def\HHbig#1.#2.#3.{\mathrm{H}^{#1} \!\big( #2, #3 \big)}
\def\hh#1.#2.#3.{h^{#1} \!\left( #2, #3 \right)}
\def\RR#1.#2.#3.{R^{#1} #2_* #3}
\def\HHc#1.#2.#3.{\mathrm{H}_{\mathrm{c}}^{#1} \!\left( #2, #3 \right)}
\def\Hh#1.#2.#3.{\mathrm{H}_{#1} \!\left( #2, #3 \right)}
\def\Hom#1.#2.#3.{\mathrm{Hom}_{#1} \!\left( #2, #3 \right)}
\def\sHom#1.#2.{\mathscr{H}\!om \!\left( #1, #2 \right)}
\def\Ext#1.#2.#3.{\mathrm{Ext}^{#1} \!\left( #2, #3 \right)}
\def\sExt#1.#2.#3.{\mathscr{E}\!xt^{#1} \!\left( #2, #3 \right)}
\newcommand{\A}[1]{\mathbb A^{#1}}
\newcommand{\PP}[1]{\mathbb P^{#1}}
\newcommand{\kahler}{K{\"{a}}hler\xspace}
\DeclareMathOperator{\Spec}{Spec}
\DeclareMathOperator{\Gal}{Gal}
\DeclareMathOperator{\Exc}{Exc}
\DeclareMathOperator{\res}{res}
\DeclareMathOperator{\restr}{restr}
\DeclareMathOperator{\supp}{supp}
\newcommand{\germ}[2]{\left( #1 \in #2 \right)} %{\left( #2, #1 \right)}
\newcommand{\zp}{\ensuremath{\Z_{(p)}}}
\newcommand{\et}{Regular Extension Theorem\xspace}
\newcommand{\lext}{Logarithmic Extension Theorem\xspace}
\newcommand{\eps}{\varepsilon}
\renewcommand{\theta}{\vartheta}
\renewcommand{\phi}{\varphi}
\newcommand{\N}{\ensuremath{\mathbb N}}
\newcommand{\Z}{\ensuremath{\mathbb Z}}
\newcommand{\Q}{\ensuremath{\mathbb Q}}
\renewcommand{\frm}{\mathfrak m}
\newcommand{\sA}{\mathscr A} \newcommand{\sB}{\mathscr B} \newcommand{\sC}{\mathscr C}
 \newcommand{\sE}{\mathscr E} 
  \newcommand{\sL}{\mathscr L}
 \newcommand{\sQ}{\mathscr Q}
  \newcommand{\cC}{\ensuremath{\mathcal C}}
\definecolor{forrest}{RGB}{81,133,49}
\definecolor{mydarkblue}{RGB}{10,92,153}
\newcommand{\PreprintAndPublication}[2]{ %
  \sideremark{ %
    \begin{color}{mydarkblue} Preprint \end{color}/ %
    \begin{color}{forrest} Publication \end{color}
  } %
  \begin{color}{mydarkblue} #1 \end{color} %
  \begin{color}{forrest} #2 \end{color} %
  \sideremark{ %
    End of
    \begin{color}{mydarkblue} Preprint \end{color}/ %
    \begin{color}{forrest} Publication \end{color} %
  }
}
\renewcommand{\PreprintAndPublication}[2]{#1}
\title[Differential forms in positive characteristic]{Differential forms on log canonical spaces \\ in positive characteristic}
\author{Patrick Graf}
\address{Lehrstuhl f\"ur Mathematik I, Universit\"at Bayreuth, 95440 Bayreuth, Germany}
\email{\href{mailto:patrick.graf@uni-bayreuth.de}{patrick.graf@uni-bayreuth.de}}
\urladdr{\href{http://www.pgraf.uni-bayreuth.de/en/}{www.graficland.uni-bayreuth.de}}
\date{January 17, 2022}
\thanks{The author was supported in full by a DFG Research Fellowship.}
\keywords{Reflexive differentials, extension theorem, surfaces in positive characteristic, residue sequence, restriction sequence}
\subjclass[2010]{14B05, 14J17, 13A35}
\begin{document}

\begin{abstract}
Deswegen zur{\"{u}}ck zum echten Leben.
In dem, wenn uns Ruhe umgibt, eine Erinnerung hochkommen kann.
Und die k{\"{u}}ndigt sich leise an, wiederholt sich ein paar Mal und man fragt sich: ist das jetzt wirklich so gewesen oder doch anders?
Aber es war schmerzhaft, diese Erinnerung.
Und die kommt immer n{\"{a}}her, und wird immer erlebbarer, und dann ist es pl{\"{o}}tzlich so, als w{\"{a}}re es ganz aktuell, als w{\"{u}}rde es wieder durch einen durchgehen im Hier und Jetzt.
\end{abstract}

\maketitle

\begingroup
\hypersetup{linkcolor=black}
\tableofcontents
\endgroup

%\addtocontents{toc}{\protect\contentsline {part}{Introduction}{}{}}

\thispagestyle{empty}

\section{Introduction}

Differential forms play an essential r{\^{o}}le in the study of algebraic varieties.
Given an algebraic variety $X$ over a field $k$ and a resolution of singularities $\pi \from Y \to X$, it is natural to ask whether any $p$-form on the regular locus $\Reg X$ extends to a regular $p$-form on $Y$.
There is also a version of this question which concerns pairs and allows certain logarithmic poles.
In order to fix our terminology once and for all, we introduce the following language.
(For notation, see \cref{sec notation}).

\begin{dfn-plain}[Extension properties for differential forms]
Let $(X, D)$ be a pair (i.e.~$X$ is normal and $D$ is a Weil \Q-divisor with coefficients in $[0, 1] \cap \Q$) defined over a field $k$, and $1 \le q \le \dim X$ an integer.
\begin{itemize}
\item We say that \emph{$(X, D)$ satisfies the \et for $q$-forms} if for any proper birational map $\pi \from Y \to X$ from a normal variety $Y$, the natural inclusion
\[ \pi_* \Omegar{Y/k}q \xhookrightarrow{\quad} \Omegar{X/k}q \]
is an isomorphism.
Equivalently, the sheaf $\pi_* \Omegar{Y/k}q$ is reflexive.
It is sufficient to check this for a resolution of singularities $Y \to X$ (if available): cf.~\cite[Lemma~2.13]{GKK10} and note that the proof given there is independent of the base field.
\item We say that \emph{$(X, D)$ satisfies the \lext for $q$-forms} if for any map $\pi$ as above, with $D_Y$ the strict transform of $D$ and $E \subset Y$ the reduced divisorial part of the exceptional set $\Exc(\pi)$, the natural inclusion
\[ \pi_* \Omegarl{Y/k}q{\, \rd D_Y. + E} \xhookrightarrow{\quad} \Omegarl{X/k}q{\, \rd D.} \]
is an isomorphism.
Equivalently, the sheaf $\pi_* \Omegarl{Y/k}q{\, \rd D_Y. + E}$ is reflexive.
Again, it is sufficient to check this for a log resolution $Y \to X$ of $(X, D)$.
\item We say that \emph{$(X, D)$ satisfies the \et} if it satisfies the \et for $q$-forms, for all values of $q$.
Ditto for the logarithmic variant.
\end{itemize}
\end{dfn-plain}

Over the complex numbers, the problem of when the Extension Theorems hold has a long history.
It has been studied by several people using different methods---the following list is not exhaustive: \cite{SvS85, Flenner88, Nam01, deJongStarr04, GKK10, GKKP11}.
The paper mentioned last, \cite{GKKP11}, can in many ways be seen as the culmination\footnote{Very recently, it has been generalized further in~\cite{KebekusSchnell18}, using perverse sheaves.} of this line of research.
It proved the following:
\stepcounter{thm}
\begin{enumerate}
\item\label{gkkp klt} Any complex klt (= Kawamata log terminal) pair $(X, D)$ satisfies the \et~\cite[Thm.~1.4]{GKKP11}.
\item\label{gkkp lc} Any complex log canonical pair $(X, D)$ satisfies the \lext~\cite[Thm.~1.5]{GKKP11}.
\end{enumerate}
Given the importance of these results, it is not free of interest to ask whether similar results also hold in positive characteristic.
Curiously enough, no research in this direction has been conducted so far.
We have identified two main reasons for this:
\begin{itemize}
\item It has been known to experts for some time that~\labelcref{gkkp klt} fails in a strong sense in positive characteristic.
In fact, over any field of nonzero characteristic, there exists a strongly $F$-regular (in particular, klt) surface $X$ violating the \et~(\cref{lext no et}).
\item The proof of~\labelcref{gkkp lc} relies on rather subtle Hodge-theoretic vanishing theorems for Du Bois spaces.
These are either false or not known in positive characteristic, inextricably linking the proof to the complex numbers.
The same can be said of the techniques in~\cite{KebekusSchnell18}.
\end{itemize}
The purpose of this article is to overcome these obstacles, at least for surfaces (but see \cref{lext p fail} for higher dimensions).
Concerning the first issue, our approach is pretty straightforward: as~\labelcref{gkkp klt} fails, we instead concentrate on~\labelcref{gkkp lc}.
(Cf.~however \cref{main reg}, which explores the failure of~\labelcref{gkkp klt} more thoroughly.)
To deal with the second problem, we develop a completely novel and much more hands-on approach to extension.
Our first main result is as follows.

\begin{thm}[Logarithmic Extension for surfaces] \label{main lc}
Let $(X, D)$ be a log canonical surface pair over a perfect field $k$ of characteristic $p \ge 7$.
Then $(X, D)$ satisfies the \lext.
\end{thm}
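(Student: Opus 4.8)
The plan is to reduce the theorem to a local statement about log canonical surface germs and to prove that statement by going through the classification of such germs, which for $p\ge 7$ reproduces the characteristic-zero list. Concretely: the \lext is local on $X$ and holds trivially wherever $(X,D)$ is snc, so only the germ $(x\in X, D)$ at each of the finitely many points where $X$ is singular or $D$ is not snc has to be treated; after the harmless base change to $\wb k$ one may assume $k$ algebraically closed, and resolution of singularities for surfaces over a perfect field is available. By a standard sandwiching argument it suffices to verify the extension isomorphism for the minimal log resolution $\pi\from\wt X\to X$ of the germ, the passage to an arbitrary proper birational $Y\to X$ costing only the \lext for snc pairs on smooth surfaces; that case is elementary, since on a smooth surface $\Omega^q$ with logarithmic poles along an snc divisor is locally free, hence reflexive, so a rational $q$-form that restricts to such a section away from a codimension-two subset extends to one over the whole surface. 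For $q=\dim X=2$ one has $\Omegar{X}{2}(\lfloor D\rfloor)=\O X(K_X+\lfloor D\rfloor)$ and the claim reduces to the inequality $K_{\wt X}+\lfloor\wt D\rfloor+E\ge\pi^*(K_X+\lfloor D\rfloor)$ over the germ (with $\wt D$ the strict transform and $E$ the reduced exceptional divisor), which is immediate from the definition of log canonicity and is characteristic-free.

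The case $q=1$ is the crux. A reflexive logarithmic $1$-form $\omega$ on the germ pulls back to a rational $1$-form $\widetilde\omega$ on $\wt X$ that is regular off $\Delta\defn\lfloor\wt D\rfloor+E$, and the task is to show that its pole order along every component of $\Delta$ is at most $1$. First I would establish the classification of log canonical surface pair germs over a perfect field of characteristic $p\ge 7$: the dual graph of $\pi$ and the position of $\wt D$ relative to $E$ agree with the characteristic-zero picture. This is exactly where the hypothesis $p\ge 7$ is needed — it makes the finite covers and group quotients used in the classification tame (in particular $p$ divides none of $24$, $48$, $120$, the orders of the exceptional binary polyhedral groups), which rules out wild exotic configurations. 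For each configuration on the list I would then bound the pole orders of $\widetilde\omega$ componentwise, organising the estimate by an induction along the dual graph that peels off components (a $(-1)$-curve appearing after a chain of contractions, or a tail of the tree). Two ingredients carry the argument: first, the residue and restriction sequences, for instance $0\to\Omega^1_{\wt X}\to\Omega^1_{\wt X}\big(\!\log\Delta\big)\xrightarrow{\res}\bigoplus_i\O{\Delta_i}\to 0$, which convert the order of the pole of $\widetilde\omega$ along $\Delta_i$ into a section of a twist of $\Omega^1_{\wt X}|_{\Delta_i}$ with prescribed poles at the points where $\Delta_i$ meets the remainder of $\Delta$; second, the vanishing of the pertinent zeroth cohomology on the components — for a rational component this is the computation that a suitable negative twist of $\Omega^1_{\wt X}|_{\PP 1}$ has no global sections, whereas for the non-klt germs (simple elliptic singularities, cusps, that is, cycles of rational curves, and their $\Z/2$-quotients) one argues instead with the cohomology of $\Omega^1$ and of the structure sheaf on a curve of arithmetic genus one, using crucially that such an $E$ enters $\Delta$ with coefficient exactly $1$.

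The decisive difficulty is this $q=1$ analysis: tracking pole orders across the whole dual graph at once in positive characteristic, where the cohomological vanishings that drive the induction are precisely what pins the hypothesis down to $p\ge 7$ and where the non-klt germs have to be handled on their own terms. That logarithmic poles along $E$ genuinely occur, and must be used, is already forced by the failure of the non-logarithmic \et for klt surfaces in characteristic $p$; hence every proof must exploit that $E$ carries coefficient $1$ in $\Delta$, and it is in making that exploitation uniform over the classification that the real work lies.
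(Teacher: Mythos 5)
Your global reductions (localizing, passing to $\bar k$, the $q=2$ case via log canonicity, reducing to one resolution) are fine and match the paper, but the core of your $q=1$ argument has a genuine gap, and it sits exactly where the paper had to work hardest. If you run the peeling argument entirely on the smooth resolution, the residue step along a component $P$ of the pole divisor $G$ with $G\cdot P<0$ is fine, but the restriction step requires the vanishing of $\HH0.P.\Omegal P1{(\Delta-P)|_P}(G|_P).$, whose degree is $-2+\#\big(P\cap(\Delta-P)\big)+G\cdot P$. At a fork of the dual graph (the central curve of a $D_n$ or $E_n$ configuration, or the fork in the non-klt graphs) this degree is $\ge -2+3-1=0$, so the vanishing that is supposed to drive your induction simply fails there; ``peeling off tails first'' does not help, because the pole divisor $G$ need not be supported on a tail, and no ordering of the components removes the fork problem. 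This is precisely why the paper does \emph{not} argue on the resolution: it uses the MMP to factor $\pi$ into steps along which $-(K+\text{boundary})$ is relatively nef, and on the resulting \emph{singular, tamely dlt} intermediate surfaces it needs new residue/restriction sequences and the computation of the different (coefficients $1-\tfrac1m<1$ at the quotient points) to make the decisive degree strictly negative.

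There is also a structural reason your plan cannot be repaired as stated: every tool you invoke on the resolution --- the snc residue and restriction sequences, intersection numbers, cohomology of line bundles on $\PP1$ and on arithmetic-genus-one cycles, and the dual-graph classification (which in Koll\'ar's book is characteristic-free) --- is independent of $p$. So if your componentwise induction worked, it would prove the \lext in characteristic $2,3,5$ as well; but the $E_8^0$ singularity $z^2+x^3+y^5=0$ has the same $E_8$ resolution graph as in characteristic zero and violates the \lext for $p\le 5$ (Section 9 of the paper). Hence the hypothesis $p\ge 7$ cannot enter through the classification, contrary to what you assert: in the actual proof it enters through the requirement that the Cartier indices of $K+\text{boundary}$ at the intermediate MMP steps (quotient singularities of order $\le 6$) be prime to $p$, which is what makes the index-one covers separable, the different computable, and the residue/restriction maps exist at all --- and the failure of exactly this tameness is what kills the argument (and the theorem) in low characteristic.
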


Our second main result explains when the \lext does imply the \et.

\begin{thm}[Regular Extension for surfaces] \label{main reg}
Let $\germ0{X, D}$ be a surface singularity over a field $k$ of characteristic $p > 0$.
Assume that for some (not necessarily log) resolution $\pi \from Y \to X$, with exceptional curves $E_1, \dots, E_\ell$, the determinant of the intersection matrix $(E_i \cdot E_j)$ is not divisible by $p$.
Then if $\germ0{X, D}$ satisfies the \lext for $1$-forms, it also satisfies the \et for $1$-forms.
\end{thm}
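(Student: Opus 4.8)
The plan is to pass to a log resolution, recognize that the obstruction to regular extension of a reflexive $1$-form is carried by the residues of its pull-back along the exceptional curves, and kill those residues using the first Chern classes of the exceptional curves together with the nondegeneracy of the intersection matrix modulo $p$.

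\emph{Reductions and set-up.} First I would replace $k$ by its separable closure: the \lext for $1$-forms remains valid after this base change (which preserves normality), the intersection numbers are unaffected, and the \et for $1$-forms over $k$ follows from the one over $k^{\mathrm{sep}}$ by faithfully flat descent; so we may assume $k=k^{\mathrm{sep}}$, whence $H^0(E_i,\mathscr O_{E_i})=k$ for every $i$. Next, a direct Schur-complement computation shows that $\bigl|\det(E_i\cdot E_j)\bigr|$ --- which over $\Z$ is the order of $\coker\bigl((E_i\cdot E_j)\colon\Z^\ell\to\Z^\ell\bigr)$ --- is unchanged by blowing up a closed point; as any two resolutions of a surface dominate a common one obtained from each by such blow-ups, the hypothesis $p\nmid\det(E_i\cdot E_j)$ does not depend on the resolution. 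Since the \lext for $1$-forms is assumed (hence holds for every resolution) and the \et for $1$-forms may be verified on a single resolution, we may take $\pi\colon Y\to X$ to be a \emph{log} resolution of $\germ0{X,D}$, with $X=\Spec\mathscr O_{X,0}$ affine and $E=\sum_{i=1}^\ell E_i$ the reduced exceptional divisor over $0$, an snc curve with smooth connected components $E_i$. Now fix a reflexive $1$-form $\omega$ on $X$. As $\omega$ is a section of $\Omega^{[1]}_X(\lfloor D\rfloor)$, the \lext gives that $\pi^*\omega$ has at worst logarithmic poles along $\lfloor D_Y\rfloor+E$; but $\omega$ is regular at the generic points of $D$, which lie in $\Reg X$, so $\pi^*\omega$ has no pole along $D_Y$ and hence $\pi^*\omega\in H^0\bigl(Y,\Omega^1_Y(\log E)\bigr)$. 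The goal is $\pi^*\omega\in H^0(Y,\Omega^1_Y)$, as doing this for every $\omega$ gives $\pi_*\Omega^1_Y=\Omega^{[1]}_X$.

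\emph{The residue sequence.} Because $E$ is snc, the sequence
\[
0\longrightarrow\Omega^1_Y\longrightarrow\Omega^1_Y(\log E)\xrightarrow{\ \res\ }\bigoplus_{i=1}^\ell\mathscr O_{E_i}\longrightarrow0
\]
is exact, and $\pi^*\omega$ is regular precisely when the residues $r_i\defn\res_{E_i}(\pi^*\omega)\in H^0(E_i,\mathscr O_{E_i})=k$ all vanish. Since $X$ is affine and $\pi$ is proper, taking cohomology over $Y$ produces a connecting homomorphism $\delta\colon k^\ell\to H^1(Y,\Omega^1_Y)$, and the tuple $(r_i)_i$, lifting to the global section $\pi^*\omega$, satisfies $\delta\bigl((r_i)_i\bigr)=0$. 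It therefore suffices to prove that $\delta$ is injective.

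\emph{The intersection matrix enters.} The boundary map of the residue sequence is cup product with the first Chern class: $\delta\bigl((c_i)_i\bigr)=\sum_i c_i\,\mathrm c_1\bigl(\mathscr O_Y(E_i)\bigr)$, where $\mathrm c_1\colon\Pic Y\to H^1(Y,\Omega^1_Y)$ is induced by $\operatorname{dlog}\colon\mathscr O_Y^{\times}\to\Omega^1_Y$ --- a characteristic-free statement about the extension class of the logarithmic-differentials sequence. For each $j$, restriction to $E_j$ followed by the canonical surjection $\Omega^1_Y|_{E_j}\surj\Omega^1_{E_j}$ and the trace isomorphism $H^1(E_j,\Omega^1_{E_j})\cong k$ defines a $k$-linear functional $\phi_j$ on $H^1(Y,\Omega^1_Y)$; by functoriality of $\operatorname{dlog}$ and the standard identity $\operatorname{Tr}\circ\mathrm c_1=\deg$ on the smooth proper curve $E_j$, one gets $\phi_j\bigl(\mathrm c_1(\mathcal L)\bigr)=\deg\bigl(\mathcal L|_{E_j}\bigr)$ for every line bundle $\mathcal L$, and in particular $\phi_j\bigl(\mathrm c_1(\mathscr O_Y(E_i))\bigr)=E_i\cdot E_j$. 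Thus $(\phi_j)_j\circ\delta\colon k^\ell\to k^\ell$ is given by the integral matrix $(E_i\cdot E_j)$, which is invertible over $k$ because $p\nmid\det(E_i\cdot E_j)$. Hence $\delta$ is injective, forcing $(r_i)_i=0$; thus $\pi^*\omega$ is regular.

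\emph{The main difficulty.} The step I expect to cost the most work is the identification of the connecting map $\delta$ with cup product by $\mathrm c_1=\operatorname{dlog}$: in characteristic zero this is classical, and here one must confirm that it requires no Hodge-theoretic input --- which it does not, being purely a statement about the extension class of the logarithmic-differentials sequence and the algebraic definition of the first Chern class via $\operatorname{dlog}$. Everything else --- the passage to $k^{\mathrm{sep}}$ and to a log resolution, the exactness of the residue sequence for snc divisors, and the identity $\operatorname{Tr}\circ\mathrm c_1=\deg$ --- is routine. Note that the hypothesis $p\nmid\det(E_i\cdot E_j)$ is used exactly once, to keep the intersection matrix invertible modulo $p$; this localized use is why the characteristic restriction here is so much weaker than the $p\ge7$ of \cref{main lc}.
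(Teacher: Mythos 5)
Your proposal is correct and follows essentially the same route as the paper: reduce to a log resolution after checking that $\det(E_i\cdot E_j)$ is unchanged by point blow-ups, use the residue sequence $0\to\Omega^1_Y\to\Omega^1_Y(\log E)\to\bigoplus_i\mathscr O_{E_i}\to 0$, and prove injectivity of the connecting map by composing with restriction to the $E_i$ and observing that the resulting endomorphism is given by the intersection matrix, which remains invertible modulo $p$ by hypothesis. The differences are minor: you dispose of the boundary components of $D$ by a direct vanishing-of-residues argument for $\pi^*\omega$ where the paper pushes forward the residue sequence and invokes a reflexivity criterion, you spell out the identification of $r\circ\delta$ with the intersection matrix via the $\operatorname{dlog}$ description of $\mathrm c_1$ and $\operatorname{Tr}\circ\mathrm c_1=\deg$ where the paper cites the proof of \cite[Prop.~3.2]{GK13}, and your preliminary passage to $k^{\mathrm{sep}}$ (which the paper omits, implicitly assuming $H^0(E_i,\mathscr O_{E_i})=k$) tacitly assumes that the determinant condition survives the possible splitting of exceptional curves that are not geometrically irreducible, a point worth an explicit argument.
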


We would like to emphasize the advantages of our approach over the existing techniques.
First of all, we feel that our proof offers a new level of both transparency and tangibility, as it does not explicitly use any Hodge theory (it does, however, rely on the Minimal Model Program).
Secondly, this very same feature also makes it, to a large extent, insensitive to the characteristics of the ground field.
In fact, aside from some effortless changes our approach also yields a new proof of the characteristic zero extension theorem~\cite[Thm.~1.5]{GKKP11}---the details are worked out in \cref{ext higher-dim zero}.
Thirdly and maybe most importantly, we obtain a lucid explanation of why the \lext fails in low characteristics, even for surface rational double points.

\subsection*{Further results in this paper}

Apart from the above extension results, we establish residue and restriction sequences for reflexive differential forms on dlt pairs in positive characteristic, and symmetric powers thereof.
This is analogous to known results in characteristic zero~\cite{GKKP11, Gra12}.
However, it is important to note that actually a slightly stronger notion is required, called \emph{tamely dlt} in this paper.
A dlt pair $(X, D)$ is tamely dlt if $D$ is reduced and the Cartier index of $K_X + D$ is not divisible by $p$ (\cref{tamely dlt}).

The precise statement is as follows.
Even though we only use it as a technical tool in the proof of our main result, we believe that it is of independent interest.

\begin{thm}[Residue sequence] \label{res}
Let $(X, D)$ be a tamely dlt surface pair (in particular, $D$ is reduced), and let $P \subset D$ be an irreducible component.
Set $P^c \defn \Diff P{D - P}$, so that $(K_X + D)\big|_P = K_P + P^c$.
Then there is a short exact sequence
\begin{equation} \label{res seq}
0 \lto \Omegarl X1{D-P} \lto \Omegarl X1D \xrightarrow{\;\res_P\;} \O P \lto 0
\end{equation}
which on the snc locus of $(X, D)$ agrees with the usual residue sequence.
Its restriction to $P$ induces a short exact sequence\footnote{Here, of course, in the middle term we are taking the double dual on $P$ and not on $X$ (the latter would be zero).}
\begin{equation} \label{res P seq}
0 \lto \Omegal P1{\rd P^c.} \lto \Omegarl X1D \big|_P \ddual \xrightarrow{\;\res_P^P\;} \O P \lto 0.
\end{equation}
More generally, for every $m \in \N$ there is a surjective map
\[ \res_P^m \from \Sym^{[m]} \Omegarl X1D \lto\mathrel{\mkern-25mu}\lto \O P \]
which generically coincides with the $m$-th symmetric power of the residue map.
\end{thm}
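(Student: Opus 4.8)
The plan is to build everything by first establishing the sequences on the snc locus, then extending across the (codimension-two) non-snc locus by taking reflexive hulls, using the key hypothesis that $(X,D)$ is \emph{tamely} dlt to avoid the wild ramification phenomena that would otherwise obstruct the extension. I would proceed as follows.

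First, on the open set $U \subseteq X$ where $(X, D)$ is snc, the classical residue sequence $0 \to \Omegap{U}1(\log(D-P)) \to \Omegap{U}1(\log D) \xrightarrow{\res_P} \O{P \cap U} \to 0$ is well known and exact. The point is that $P \cap U$ is smooth and $D$ has normal crossings along it. Since $(X,D)$ is dlt, the complement $X \setminus U$ has codimension $\ge 2$ (for dlt surface pairs this follows from the standard structure theory, using that the generic points of components of $\rd D.$ and their intersection points are snc points). Both $\Omegarl X1{D-P}$ and $\Omegarl X1D$ are reflexive by construction, and $\O P$ is reflexive as a sheaf on the normal curve $P$ (in fact $P$ is smooth at all snc points, and at the remaining finitely many points we work with the pushforward). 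So I would define the maps in~\labelcref{res seq} as the reflexive pushforwards $j_* (\text{--})$ of the snc-level maps, where $j \from U \hookrightarrow X$; left-exactness and the injectivity of $\res_P$ on a dense open set are then automatic. The content is \emph{surjectivity} of $\res_P$ and \emph{exactness in the middle} across $X \setminus U$.

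For this I would argue locally analytically (or formally/étale-locally) around a point $0 \in X \setminus U$. By the structure theory of dlt surface singularities, $\germ0{X, D}$ is, up to a tame cyclic quotient, a pair $\germ0{\A2, \{xy = 0\}}$ or $\germ0{\A2, \{x=0\}}$ or a smooth point — and the quotient group $\mu_r$ acts with order $r$ \emph{prime to $p$} precisely because the pair is tamely dlt (this is where the Cartier-index hypothesis enters: $r \mid$ the Cartier index of $K_X + D$). Since $r$ is prime to $p$, taking $\mu_r$-invariants is exact and commutes with forming symmetric powers and reflexive hulls; concretely, $\Omegarl X1D = \big(\pi_* \Omegal V1{\tilde D}\big)^{\mu_r}$ for the index-one cover $\pi \from V \to X$, and the residue map descends to invariants. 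The residue of a generating log form $\frac{dx}{x}$ on $V$ is a $\mu_r$-semi-invariant that, after averaging (possible since $p \nmid r$), yields a generator of $\O P$; hence $\res_P$ is surjective and the kernel is exactly $\Omegarl X1{D-P}$. This establishes~\labelcref{res seq}.

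To obtain~\labelcref{res P seq}, I would restrict~\labelcref{res seq} to $P$ and then take the double dual on $P$. Restricting is not exact, but the only failure is controlled by $\mathcal Tor_1$, which is supported on the finitely many points $P \cap \Sing X$ and thus disappears after double-dualizing on the curve $P$; the identification of the kernel with $\Omegal P1{\rd P^c.}$ is exactly the adjunction formula $(K_X + D)|_P = K_P + P^c$ together with a local check at the quotient singularities (again using $p \nmid r$, and $P^c = \Diff P{D-P}$ absorbs the fractional boundary). Finally, for the symmetric powers: composing the $m$-th reflexive symmetric power of $\res_P$, namely $\Sym^{[m]} \Omegarl X1D \to \Sym^{[m]}_P(\O P^{\oplus ?})$, with the obvious projection onto the summand $\O P$ coming iterating the residue, produces $\res_P^m$; surjectivity follows from surjectivity of $\res_P$ and right-exactness of $\Sym$, once one knows the relevant sheaves are reflexive and the construction is compatible with the $\mu_r$-averaging. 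The main obstacle, and the reason the theorem is restricted to \emph{tamely} dlt pairs, is precisely this last compatibility: without $p \nmid r$ the invariant-taking functor is no longer exact, the residue need not survive averaging, and both surjectivity and middle-exactness can genuinely fail — so the crux of the write-up is to isolate the tame quotient structure of dlt surface singularities and push everything through the $\mu_r$-action cleanly.
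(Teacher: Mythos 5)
Your core mechanism is the one the paper uses: at the finitely many non-snc points you pass to the index-one cover of $K_X + D$, which by the tameness hypothesis is a separable, cyclic, quasi-\'etale cover of degree prime to $p$ whose preimage pair is snc (this is exactly \cref{dlt quotient}); you descend the residue map from the cover, get surjectivity from exactness of taking invariants (averaging, $p \nmid r$), and identify the kernel of $\res_P$ by reflexivity plus agreement on the snc locus. Two points of divergence deserve comment. For sequence~\labelcref{res P seq}, the paper does not restrict~\labelcref{res seq} to $P$ and saturate; it first proves the restriction sequence of \cref{restr} independently and then obtains~\labelcref{res P seq} from a Snake Lemma diagram combining the two. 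Your route (restrict, kill torsion on the curve $P$, identify the kernel locally at the quotient points) is viable, but note that the ``local check at the quotient singularities'' you defer is precisely the existence of the restriction map of \cref{restr} at those points (proved in the paper by a computation in the style of \cref{compute diff}), so nothing is saved---only reorganized. Second, your construction of $\res_P^m$ as ``$\Sym^{[m]}$ of $\res_P$ followed by a projection'' is loose at exactly the place where the paper invests its effort: the map defined on the snc locus and pushed forward a priori lands in $\O P(*\rp P^c.)$, and since local sections of the reflexive hull $\Sym^{[m]} \Omegarl X1D$ near a singular point need not be sums of products of sections of $\Omegarl X1D$, surjectivity of $\res_P$ together with right-exactness of $\Sym^m$ does not by itself produce a map into $\O P$. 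One must check regularity of the residue of an arbitrary section of the reflexive hull directly, which the paper does by pulling back along the tame cover, where the pair is snc and the residue commutes with \'etale pullback; your appeal to ``compatibility with the $\mu_r$-averaging'' gestures at this, but that verification is the substantive step and should be written out.
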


\begin{thm}[Restriction sequence] \label{restr}
Notation as above.
Then there is a short exact sequence\footnote{By definition, $\Omegarl X1D(-P) \ddual$ means the double dual of $\Omegarl X1D \tensor \O X(-P)$. Taking the reflexive hull is necessary because $P \subset X$ is in general not a Cartier divisor.}
\begin{equation} \label{restr seq}
0 \lto \Omegarl X1D(-P) \ddual \lto \Omegarl X1{D-P} \xrightarrow{\;\restr_P\;} \Omegal P1{\rd P^c.} \lto 0
\end{equation}
which on the snc locus of $(X, D)$ agrees with the usual restriction sequence.
More generally, for every $m \in \N$ there is a surjective map
\[ \restr_P^m \from \Sym^{[m]} \Omegarl X1{D-P} \lto\mathrel{\mkern-25mu}\lto \O P \big( mK_P + \rd mP^c. \big) \]
which generically coincides with the $m$-th symmetric power of the restriction map.
\end{thm}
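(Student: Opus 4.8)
The plan is to construct the sequences first over the large open subset $X^\circ \subseteq X$ on which $(X, D)$ is snc, and then extend over all of $X$ by reflexivity. Recall that a dlt pair is snc away from a closed subset of codimension $\ge 2$, so in our surface setting $X \setminus X^\circ$ is a finite set of closed points; in particular $X^\circ$ is smooth, $P^\circ \defn P \cap X^\circ$ is a smooth curve, and $D-P$ restricts over $X^\circ$ to a reduced snc divisor meeting $P^\circ$ transversally. Over $X^\circ$ the sequence~\eqref{restr seq} is then nothing but the classical restriction (conormal) sequence of $P^\circ \subset X^\circ$, twisted by the boundary. Two points make this precise. First, $\Omegarl X1D(-P) \ddual$ restricts over $X^\circ$ to $\Omegal{X^\circ}1{D} \tensor \O{X^\circ}(-P)$, which is exactly the subsheaf of $\Omegal{X^\circ}1{D-P}$ of forms whose restriction to $P^\circ$ lies in the conormal bundle of $P^\circ$ in $X^\circ$, i.e.\ the kernel of $\restr_P$: allowing a logarithmic pole along $P$ and then twisting down by $P$ produces precisely these forms. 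Second, over $X^\circ$ the different $P^c$ is reduced, so $\rd mP^c. = mP^c$ there for every $m$, and the $m$-th symmetric power of the restriction sequence gives over $X^\circ$ a surjection onto $\Sym^m \Omegal{P^\circ}1{P^c} = \O{P^\circ}\big(mK_P + \rd mP^c.\big)$.

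Let $j \from X^\circ \inj X$ be the inclusion. As the first two terms of~\eqref{restr seq} are reflexive on $X$, they coincide with $j_*$ of their restrictions to $X^\circ$; applying $j_*$ to the snc restriction sequence and using left-exactness of $j_*$ gives automatically exactness of~\eqref{restr seq} on the left and in the middle. Moreover, since $P$ is a smooth curve with $P \setminus P^\circ$ finite, the image of the resulting map $\restr_P$ (and likewise of each $\restr_P^m$) is a coherent subsheaf of the quasi-coherent pushforward that agrees with $\Omegal P1{\rd P^c.}$ over $P^\circ$, hence differs from it only by a twist supported on the finitely many points of $P \setminus P^\circ$. It therefore remains to determine that twist — that is, to show the image is exactly the asserted target $\O P\big(mK_P + \rd mP^c.\big)$, and in particular that $\restr_P^m$ is surjective onto it. This is a local problem on $X$ at the tamely dlt surface singularities lying on $P$.

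The heart of the proof is thus a local computation at a germ $\germ x{X, D}$ of a tamely dlt surface singularity with $x \in P$, and this is where the tameness hypothesis — that the Cartier index $r$ of $K_X + D$ is prime to $p$ — is crucial. It guarantees that the index-one cover $\gamma \from (\wt X, \wt D) \to (X, D)$ of $K_X + D$ is a cyclic cover of degree $r$ prime to $p$, hence tamely ramified and étale over $X^\circ$, with $(\wt X, \wt D)$ again dlt and $K_{\wt X} + \wt D$ Cartier, and it makes the Reynolds operator $\frac1r \sum_{\zeta^r = 1} \zeta^*$ available to split off $\mu_r$-invariants. Consequently the local computation of $\Omegarl X1{D-P}$, of its restriction to $P$, and of the different $P^c$ proceeds exactly as in characteristic zero (cf.~\cite{Gra12}): one reduces to an explicit tame quotient model, writes down generators of the reflexive differentials, and reads off that $\restr_P$ is surjective onto $\O P\big(K_P + \rd P^c.\big) = \Omegal P1{\rd P^c.}$ with the stated kernel. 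For the $m$-th symmetric power the same computation shows that forming the reflexive hull $\Sym^{[m]}$ over $X$ rounds the pole divisor $mP^c$ — which has fractional coefficients at $x$ — down to $\rd mP^c.$; this accounts for the discrepancy between $\Sym^{[m]} \Omegal P1{\rd P^c.} = \O P\big(mK_P + m\rd P^c.\big)$ and the asserted target $\O P\big(mK_P + \rd mP^c.\big)$, and surjectivity of $\restr_P^m$ persists.

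The main obstacle is precisely this local analysis at the tamely dlt singularities: it requires a sufficiently explicit description of such singularities and of reflexive (symmetric) differentials on them in characteristic $p$, together with a careful verification that tameness is exactly what makes the characteristic-zero computation go through verbatim — so that no spurious differential forms appear, the round-down phenomenon for symmetric powers survives, and $\restr_P^m$ stays surjective at $x$. A more routine but still non-trivial prerequisite is that adjunction $(K_X + D)\big|_P = K_P + P^c$, and the behaviour of the different in general, hold as stated in positive characteristic; this too rests on tameness and is presumably established in the course of proving the residue sequence~(\cref{res}), so it may be invoked here.
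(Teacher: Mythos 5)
Your proposal is correct and follows essentially the same route as the paper: construct the maps by pushing forward the symmetric powers of the restriction map from the snc locus, then settle the behaviour at the finitely many non-snc points via the prime-to-$p$ cyclic index-one covers furnished by tameness (with the Reynolds/invariants argument giving surjectivity and a local computation in the style of \cref{compute diff} giving the round-down $\rd mP^c.$), and identify the kernel of $\restr_P$ by reflexivity and agreement with $\Omegarl X1D(-P)\ddual$ on $\snc{(X,D)}$. The only cosmetic difference is that the paper works with the local covers of \cref{dlt quotient} and phrases the kernel step via reflexivity rather than left-exactness of $j_*$, which amounts to the same thing.
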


\subsection*{Sharpness of results}

In \cref{sharp}, we have gathered a number of examples to show that our results are sharp.
First of all, \cref{main lc} does fail in characteristic less than seven, even if $k$ is algebraically closed, $D = 0$ and $X$ is a rational double point (RDP).
More precisely, we show by explicit calculation that the singularity given by the equation $z^2 + x^3 + y^5 = 0$ violates the \lext over \emph{any} field of characteristic $p \le 5$.
In the terminology of Artin's classification of RDPs~\cite{Artin77}, this is the $E_8^0$ singularity.
This failure also occurs for some singularities of types~$D_n$ ($p = 2$) and $E_6, E_7$ ($p = 2, 3$).
We have omitted those calculations, as they are very similar in spirit to the $E_8$ case.

Turning to \cref{main reg}, its statement is sharp too, as shown by the example of contracting a smooth rational curve with self-intersection $-p$ in any characteristic $p > 0$.
In this case, the \lext holds for $1$-forms, but the \et does not.
Again, this can be seen via explicit computation.

The latter example can also be elaborated upon to show that \cref{restr} fails for dlt pairs that are not tamely dlt.
If one tries to run the proof of \cref{main lc} on, say, a $D_n$ singularity in characteristic two, the lack of a suitable restriction map is exactly where the argument breaks down: already the first contraction performed by the MMP produces a pair that is not tamely dlt.
This should be seen as the deeper reason for the failure of \cref{main lc} in low characteristics.

\subsection*{Higher dimensions}

For the majority of readers, a most pressing question will be to what extent \cref{main lc} carries over to higher dimensions.
As we will see in \cref{ext higher-dim zero}, in characteristic zero the higher-dimensional \lext is intimately linked to the fact that on a projective snc pair $(X, D)$, a line bundle $\sL \subset \Omegal XpD$ cannot be big unless $p = \dim X$.
This is the content of the Bogomolov--Sommese vanishing theorem~\cite[Cor.~6.9]{EV92}, while the weaker statement that $\sL$ cannot be ample is a special case of (Kodaira--Akizuki--)Nakano vanishing~\cite[Thm.~1$''$]{AkizukiNakano54}.
Both results fail badly in positive characteristic and in fact there are counterexamples strong enough to show that \cref{main lc} itself does not hold.
The precise statement is as follows and the details of the construction can be found in \cref{ext higher-dim p}.

\begin{thm}[Failure of the higher-dimensional \lext] \label{lext p fail}
Fix an algebraically closed field $k$ of characteristic~$p > 0$.
\begin{enumerate}
\item\label{lext p fail lc} In any dimension $n \ge p - 1$, there exists a log canonical pair $(X, \emptyset)$ over~$k$ that violates the \lext for $(n - 2)$-forms.
\item\label{lext p fail can} If $n \ge 2p - 1$, there exists a \emph{canonical} pair $(X, \emptyset)$ for which the \lext fails as above.
\item\label{lext p fail term} If $n \ge 3p - 1$, there even exists a \emph{terminal} pair $(X, \emptyset)$ as above.
\end{enumerate}
Furthermore, the above examples admit log resolutions.
\end{thm}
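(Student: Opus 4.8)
My plan is to realise all the required examples as affine cones $X$ over smooth projective varieties $Z$. The point is that if $\Omegap Z{\dim Z-1}$ contains an ample line bundle --- a failure of Akizuki--Nakano vanishing that is impossible over $\C$ but available in characteristic~$p$ --- then the exceptional divisor of the natural resolution of $X$ obstructs the logarithmic extension of $(\dim X-2)$-forms, while the positivity of $-K_Z$ relative to the cone polarisation dictates how bad the singularities of $X$ are. In more detail: let $Z$ be smooth projective over $k$ of dimension $d$, let $\sL$ be an ample line bundle on $Z$, and suppose there is a nonzero map $\sL\hookrightarrow\Omegap Z{d-1}$; since $\Omegap Z{d-1}\isom\T Z\tensor\can Z$, this is equivalent to $\sL\hookrightarrow\T Z\tensor\can Z$, and, by Serre duality, to $\HH d.Z.{\Omegap Z1\tensor\sL}.\ne0$. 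Assuming such $(Z,\sL)$ available, set $X\defn\Spec\bigoplus_{m\ge0}\HH0.Z.{\sL^m}.$; this is a normal variety with an isolated singularity at the vertex $v$, and the blow-up $\pi\from\wt X\to X$ of $v$ is the total space of $\sL^{-1}$ over $Z$, with bundle projection $\rho\from\wt X\to Z$ and exceptional divisor $E\isom Z$ satisfying $\O{\wt X}(E)\big|_E\isom\sL^{-1}$. As $\wt X$ is smooth and $E$ smooth and irreducible, $\pi$ is a log resolution of $(X,\emptyset)$ --- which settles the last sentence of the theorem --- so it suffices to show that the \lext fails for this $\pi$.

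To do that I would exhibit a non-extending form. Choose $0\ne\sigma\in\HH0.Z.{\Omegap Z{d-1}\tensor\sL^{-1}}.$ and let $s\in\HH0.\wt X.{\rho^*\sL^{-1}}.$ be the tautological section, which cuts out $E$ with multiplicity one. Over $\wt X\setminus E\isom X\setminus\{v\}$ the section $s$ is nowhere zero, so $\eta\defn s^{-1}\cdot\rho^*\sigma$ is a well-defined $(d-1)$-form there, via the split inclusion $\rho^*\Omegap Z{d-1}\hookrightarrow\Omegap{\wt X}{d-1}$. Since $\{v\}$ has codimension $\dim X=d+1\ge2$ in the normal variety $X$, this $\eta$ extends to a section of $\Omegar X{d-1}$. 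On the other hand, in local coordinates on $\wt X$ near a general point of $E$, $\eta$ is $s^{-1}$ times a form pulled back from $Z$: it has a genuine order-one pole along $E$ and no $\tfrac{\d s}{s}$-component, so it is not a section of $\Omegarl{\wt X}{d-1}{E}$. Thus $\pi_*\Omegarl{\wt X}{d-1}{E}\hookrightarrow\Omegar X{d-1}$ is not surjective, i.e.\ $(X,\emptyset)$ violates the \lext for $(d-1)$-forms. With $n\defn\dim X=d+1$, this is failure for $(n-2)$-forms.

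It remains to control the singularities and the dimensions. The discrepancy of $E$ over $v$ is $a(E)=-r-1$, where $K_Z\sim_\Q r\sL$, so $X$ is log canonical, canonical, or terminal exactly when $r\le0$, $r\le-1$, or $r<-1$ --- that is, when $-K_Z$ is nef, at least as positive as $\sL$, or strictly more positive. For the log canonical statement I take $\can Z\equiv0$ (so $r=0$); a pathological Calabi--Yau $Z$ of this kind exists already in dimension $d=p-2$, giving $n=p-1$, via a Frobenius-twisted construction in the spirit of Tango and Raynaud. For the canonical statement one must in addition make $-K_Z$ positive enough while keeping the tangent sheaf pathological, which costs room and forces the base variety (or an iterated-cone variant of it) up to dimension $2p-2$; the strict inequality needed for the terminal statement forces it to $3p-2$. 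This is the source of the thresholds $n\ge p-1$, $2p-1$, $3p-1$. Finally, given the minimal example $X_0$ of dimension $n_0$, every larger $n$ is handled by passing to $X_0\x\A{n-n_0}$: this is again log canonical (resp.\ canonical, terminal), since these properties are stable under product with a smooth variety; the base change of the resolution is still a log resolution; and $\eta\wedge\d x_1\wedge\dots\wedge\d x_{n-n_0}$ is a reflexive $(n-2)$-form that does not extend.

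The cone computations, the check that $\eta$ is reflexive but not logarithmic, and the product trick are all routine. The hard part will be the explicit construction, in the claimed minimal dimensions, of a smooth projective $Z$ carrying an ample line bundle inside $\Omegap Z{\dim Z-1}$ and with $K_Z$ of the prescribed sign --- exactly the brand of positive-characteristic pathology (Tango structures, Frobenius pullbacks, foliations violating Miyaoka's theorem) that underlies the classical counterexamples to Kodaira and Bogomolov--Sommese vanishing. Those details are the content of \cref{ext higher-dim p}.
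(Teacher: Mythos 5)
Your overall architecture---an affine cone over a polarized variety violating Nakano vanishing in degree $\dim - 1$, the discrepancy of the vertex blow-up governed by the relation between the canonical class and the polarization, and a reflexive $(n-2)$-form acquiring a genuine (non-logarithmic) order-one pole along the exceptional divisor---is exactly the paper's strategy (\cref{ext cones} together with \cref{proof lext p fail}), and your pole computation, discrepancy bookkeeping and product trick are all fine (the paper does not even need the product trick, since its construction works directly in every admissible dimension). The genuine gap is that you never construct the base varieties, and that existence statement \emph{is} the content of the theorem. You assert that a smooth projective $Z$ of dimension $p-2$ with (numerically) trivial canonical class carrying an ample line bundle inside $\Omegap Z{\dim Z-1}\isom \T Z\tensor\can Z=\T Z$ exists ``via a Frobenius-twisted construction in the spirit of Tango and Raynaud'', and similarly in dimensions $2p-2$ and $3p-2$ for the canonical and terminal cases; but no construction or reference providing one is given. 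Tango--Raynaud-type examples violate Kodaira vanishing for line bundles on surfaces; they do not produce ample subsheaves of $\Omegap Z{\dim Z-1}$ on a variety with prescribed canonical class in the stated dimensions, and it is precisely this step that is hard. In particular, for your log canonical case you would need a smooth variety with trivial canonical bundle whose tangent bundle contains an ample line bundle---an object whose existence is, to say the least, not established by your sketch---so the thresholds $n\ge p-1$, $2p-1$, $3p-1$ are unsupported in your argument.

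The paper closes this gap by dropping your smoothness requirement. In \cref{kollar construction} it runs Koll\'ar's $p$-th root construction $Y=X[\sqrt[p]s]\to X$ over a smooth hypersurface $X\subset\PP{n+1}$ of degree $d=n-2p+3$ (resp.\ $n-3p+3$, $n-p+3$) with $L=\O X(2)$ (resp.\ $\O X(3)$, $\O X(1)$); the resulting $Y$ has isolated canonical (for $n\ge3$ even terminal) hypersurface singularities, the prescribed sign of $\can Y$, and a nonzero section of $\Omegar Y{n-1}\tensor L\inv$ coming from the reflexive wedge power of Koll\'ar's exact sequence (\cref{nakano fail}). Smoothness of the base is neither achieved (the paper notes these covers are ``virtually never smooth'') nor needed, because the cone criterion of \cref{lext cones} and the discrepancy computation only require control of reflexive forms and of the singularities of the total space of $L\inv$. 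So your proposal reproduces the paper's framework but replaces its actual counterexample machine by an unproven---and, as stated, possibly false---existence claim; as it stands it does not prove the theorem.
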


As \cref{main lc} already fails for surfaces if the characteristic is low, \cref{lext p fail} becomes interesting only for $p \ge 7$.
In this sense, the lowest-dimensional example it provides is a $6$-dimensional singularity in characteristic~$7$.
The following conjecture hence remains open.

\begin{boi}
Over a perfect field of characteristic $p \ge 7$, the \lext holds for log canonical pairs of dimension $\le p - 2$.
\end{boi}

Of course, we do not believe in the Sacrilegious Conjecture.
Rather, our inability to disprove it is caused by a lack of techniques to produce meaningful counterexamples.

\subsection*{Relation to $F$-singularities}

The examples in \cref{lext p fail} are (un-)fortunately not $F$-pure.
On the other hand, using classification results~\cite[Thm.~1.1]{Hara98} one can show that all normal $F$-regular surface singularities over a perfect field satisfy the \lext.
The same is probably true for $F$-pure surfaces, but the case distinctions get much more tedious.
These observations have led us to the following intriguing question:

\begin{ques} \label{565}
Is there a version of the \lext for strongly $F$-regular/$F$-pure singularities that does not exclude low characteristics and works in any dimension?
\end{ques}

The following line of attack appears to be quite promising.
By~\cite[Thm.~3.3]{Watanabe91}, the affine cone over a smooth projective variety $X$ is $F$-pure if and only if $X$ is (globally) $F$-split.
Hence one would need to investigate whether $F$-split varieties satisfy Nakano vanishing.
Since at least Kodaira vanishing obviously holds for these, chances may not be that bad.
This would immediately provide a positive answer to \cref{565} for cones.
On the other hand, if Nakano vanishing failed, we would obtain an $F$-pure counterexample to the \lext.

\PreprintAndPublication{
\subsection*{Outline of proof}

We would like to explain the general strategy for the proof of \cref{main lc}.
It can be broken up into three major steps.

\subsubsection*{Step 1: Baby case}

The basic idea is quite simple.
Consider an lc surface pair $(X, D)$ as in the theorem, where for simplicity we will assume $D = 0$.
Also, fix a log resolution $\pi \from Y \to X$, with exceptional divisor $E$.
Given any reflexive $1$-form $\sigma \in \HH0.X.\Omegar X1.$, we may extend $\sigma$ to a rational logarithmic $1$-form $\wt\sigma$ on $Y$, i.e.~a global section of $\Omegal Y1E(G)$, where $G \ne 0$ is effective and $\supp(G) \subset E$.
This is equivalent to giving a map $\O Y(-G) \to \Omegal Y1E$.
We use the fact that $G^2 < 0$ combined with the residue sequence on the snc pair $(Y, E)$ to show that this map factors as $\O Y(-G) \to \Omegal Y1{E-P}$, for some component $P \subset E$.
The same argument repeated, but this time using the restriction sequence, then shows that our map actually even factors as $\O Y(-G) \to \Omegal Y1E(-P)$.
This means that $G$ may be replaced by the strictly smaller divisor $G - P$.
Repeating this procedure finitely many times, namely as long as $G$ is nonzero, we finally obtain $G = 0$ and hence $\wt\sigma \in \HH0.Y.\Omegal Y1E.$ as desired.

It turns out that for this approach to work smoothly, $-(K_Y + E)$ needs to be $\pi$-nef.
By adjunction, $(K_Y + E)\big|_P = K_P + (E - P)\big|_P$ for every component $P \subset E$ and hence the nefness condition in practice means that $E$ is either a chain or a cycle of rational curves, or a single elliptic curve.
Curiously, this already implies \cref{main lc} (in any characteristic!) for two extreme cases: the $A_n$ singularities on the one hand and Gorenstein log canonical singularities \emph{that are not canonical} on the other hand.

\subsubsection*{Step 2: General case}

For e.g.~a $D_4$ singularity, the baby case argument breaks down and this is where the technical complications, as well as the restrictions on the ground field, start.
Indeed, the crucial idea is to use the Minimal Model Program to factor the resolution $\pi$ into a series of steps each of which satisfies the nefness condition from Step~1, as detailed in \cref{sec fact res}.
As is well-known, if run on an snc pair, the MMP will produce intermediate steps and an end result that are only dlt.
This is where Theorems~\labelcref{res} and~\labelcref{restr} come into play.
They are proved in \cref{res dlt}, with preparations in \cref{adj diff dlt}.

With these technical generalizations in place, the ideas from Step~1 apply to show that $1$-forms extend along each step in the factorization of $\pi$ provided by the MMP.
The precise statement may be found in \cref{lift elem}.
Piecing all the steps together, we can prove our result \emph{if $(X, D)$ has a tame resolution}, i.e.~one that factors in such a way that all intermediate pairs are tamely dlt.
It is easy to see that this implies the following weak form of \cref{main lc}:
\emph{For every extended dual graph $\Gamma$ (i.e.~incorporating self-intersection numbers as well as the boundary components), there is a prime number $p_0 = p_0(\Gamma)$ such that every log canonical surface pair with dual graph $\Gamma$ and defined over a field of characteristic $p \ge p_0$ satisfies the \lext.}

\subsubsection*{Step 3: Effective bounds}

Two things remain to be done: First, to eliminate the dependency of $p_0$ on $\Gamma$ and second, to give an effective value for $p_0$.
In order to achieve this, we resort to the classification of log canonical surface pairs over an algebraically closed field.
(This is also where the perfectness hypothesis on $k$ comes from: the base change to the algebraic closure needs to be separable.)
We stress that this is the only place in the whole paper where classification is used.
It turns out that we may choose $p_0 = 7$, finishing the proof of \cref{main lc}.
The details are contained in \cref{sec main lc}.
}{}

\subsection*{Acknowledgements}

I would like to thank Karl Schwede and Thomas Polstra for helpful discussions and answering many of my questions.
Jorge Vit\'orio Pereira has brought the paper~\cite{Kol95nh} to my attention via MathOverflow.
The Department of Mathematics at the University of Utah has provided support and excellent working conditions.

I am particularly grateful to the anonymous referee for extremely diligent proofreading, which has not only greatly improved the presentation of certain details, but also led to some simplifications.

\section{Notation and conventions} \label{sec notation}

\subsection*{Base field}

Throughout this paper, we work over a field $k$, which except for \cref{ext higher-dim zero} will be assumed to be of positive characteristic~$p > 0$.
Further assumptions (perfect, algebraically closed, \dots) will be expressly stated whenever necessary.

\subsection*{Pairs and divisors}

A \emph{pair} $(X, D)$ consists of a normal variety $X$ and a Weil \Q-divisor $D = \sum a_i D_i$ with coefficients $0 \le a_i \le 1$.
The pair is called \emph{reduced} if $D$ is reduced.
The \emph{round-down} of $D$ is denoted by $\rd D. \defn \sum \rd a_i. D_i$, and similarly for the \emph{round-up} $\rp D.$.
The \emph{fractional part} $\{ D \}$ is, by definition, $D - \rd D.$.
For a uniform definition of the singularities of the MMP (klt, plt, dlt, lc, \dots), we refer to \cite[Def.~2.8]{Kollar13}.

The regular and singular loci of a variety $X$ are denoted $\Reg X$ and $\Sing X$, respectively.
We say that a closed subset $Z \subset X$ is \emph{small} if $\codim XZ \ge 2$, and that an open subset $U \subset X$ is \emph{big} if $X \setminus U$ is small.

A Weil divisor $D$ on a normal variety $X$ is said to be \emph{\zp-Cartier} if it has a multiple not divisible by $p$ which is Cartier.
Equivalently, $D$ is in the image of the natural map
\[ \operatorname{Div}(X) \tensor_\Z \zp \lto \operatorname{WDiv}(X) \tensor_\Z \zp. \]
Since $\Z_{(0)} = \Q$, in characteristic zero we recover the usual notion of being \Q-Cartier.
More generally, the \emph{Cartier index} of $D$ is the smallest integer $m > 0$ with $mD$ Cartier (or $+\infty$ if no such $m$ exists).

\subsection*{Reflexive sheaves}

Let $X$ be a normal variety and $\sE$ a coherent sheaf on $X$.
The $\O X$-double dual (or reflexive hull) of $\sE$ is denoted by $\sE \ddual$.
The sheaf $\sE$ is called \emph{reflexive} if the canonical map $\sE \to \sE \ddual$ is an isomorphism.
A \emph{Weil divisorial sheaf} is a reflexive sheaf of rank one.
A coherent subsheaf $\sA \subset \sE$ of a reflexive sheaf is said to be \emph{saturated} if the quotient $\factor \sE \sA$ is torsion-free.
We use square brackets $^{[-]}$ as an abbreviation for taking the double dual, e.g.~$\sE^{[k]} = (\sE^{\tensor k}) \ddual$ and $f^{[*]} \sE = (f^* \sE) \ddual$ for a map $f \from Y \to X$ with $Y$ normal.

Let $D \subset X$ be a reduced divisor.
Then we denote by
\[ \sE(*D) \defn \varinjlim \big( \sE \tensor \O X(mD) \big) \ddual \]
the quasi-coherent sheaf of sections of $\sE$ with arbitrarily high order poles along~$D$.
If $i \from U \inj X$ is the inclusion of the snc locus of $(X, D)$, the sheaf of \emph{reflexive differential $q$-forms} is defined to be $\Omegarl {X/k}qD \defn i_* \Omegal {U/k}q{D\big|_U}$.
The base field $k$ will usually be dropped from notation.

Following are some useful properties of reflexive sheaves which will be used implicitly or explicitly.
For proofs, we refer to~\cite[Sec.~3]{Gra12}.

\begin{lem}
Let $\sE$ be a reflexive sheaf on the normal variety $X$ and $\sA, \sB \subset \sE$ coherent subsheaves, with $\sA$ saturated.
\begin{enumerate}
\item The sheaf $\sA$ is reflexive.
\item Let $s$ be a rational section of $\sA$ which is regular as a section of $\sE$.
Then $s$ is also regular as a section of $\sA$.
\item\label{gen equal} Suppose that for some dense open subset $U \subset X$, the subsheaves $\sA\big|_U$ and $\sB\big|_U$ of $\sE\big|_U$ are equal.
Then it follows that $\sB \subset \sA$. \qed
\end{enumerate}
\end{lem}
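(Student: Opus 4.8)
The plan is to derive all three assertions from two facts. First, the hypothesis that $\sA$ is saturated means precisely that $\sQ \defn \factor \sE \sA$ is torsion-free; since a torsion-free sheaf on an integral variety embeds into the (constant) sheaf attached to its generic stalk, a local section of $\sQ$ that vanishes on a dense open subset vanishes identically. Second, on a normal variety a coherent sheaf is reflexive if and only if it is torsion-free and satisfies Serre's condition $(S_2)$, equivalently, if and only if it is torsion-free and every section defined off a small closed subset extends; this is among the properties recalled in \cite[Sec.~3]{Gra12}. With these in hand, each of the three parts becomes the same move: lift a section to $\sE$, note that its image in $\sQ$ dies on a dense open set, and conclude that it already lay in $\sA$.

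Concretely, for the first assertion, $\sA$ is torsion-free as a subsheaf of $\sE$, so by the criterion above it is enough to check the extension property. Given an open $W \subset X$, a small closed $Z \subset W$, and $s \in \sA(W \setminus Z)$, reflexivity of $\sE$ extends $s$ to some $\tilde s \in \sE(W)$; its image in $\sQ(W)$ restricts to zero on the dense open set $W \setminus Z$, hence is zero, so $\tilde s \in \sA(W)$. Thus $\sA$ is reflexive. For the second assertion, let $s$ be a rational section of $\sA$ with $s \in \sE(X)$; then $s \in \sA(V)$ for some dense open $V$, so the image of $s$ in $\sQ(X)$ vanishes on $V$, hence vanishes, hence $s \in \sA(X)$ — note this uses only the torsion-freeness of $\sQ$, not the first part. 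For the third assertion, pick any local section $t \in \sB(W)$; then $t \in \sE(W)$, and by hypothesis $t|_{W \cap U} \in \sA(W \cap U)$, so the image of $t$ in $\sQ(W)$ vanishes on the dense open $W \cap U$, hence vanishes, hence $t \in \sA(W)$; therefore $\sB \subseteq \sA$.

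I do not expect a genuine obstacle here: the argument is routine, and the whole content really sits in the first assertion, the other two being instant consequences of the torsion-freeness of $\sQ$. The only point that needs a little care is that one must invoke the codimension-$\ge 2$ characterization of reflexivity in order to transfer it from $\sE$ to $\sA$. If one prefers to bypass the $(S_2)$/extension criterion, the first assertion can instead be obtained from the depth estimate
\[ \operatorname{depth}_x \sA \;\ge\; \min\bigl\{ \operatorname{depth}_x \sE,\ \operatorname{depth}_x \sQ + 1 \bigr\} \]
applied to $0 \to \sA \to \sE \to \sQ \to 0$ at each point $x$ with $\operatorname{codim} \ge 2$: there $\operatorname{depth}_x \sE \ge 2$, and $\operatorname{depth}_x \sQ \ge 1$ whenever $\sQ_x \ne 0$ (while $\sA_x = \sE_x$ if $\sQ_x = 0$), so $\operatorname{depth}_x \sA \ge 2$, i.e.\ $\sA$ is $(S_2)$ and hence reflexive.
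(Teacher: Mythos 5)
Your argument is correct, and all three parts indeed reduce to the single observation that a section of the torsion-free quotient $\sQ = \factor\sE\sA$ which vanishes on a dense open subset vanishes identically; your handling of (1) via the characterization ``reflexive $\Leftrightarrow$ torsion-free and sections extend across closed subsets of codimension $\ge 2$'' (valid on a normal variety), as well as the alternative depth-lemma argument giving $(S_2)$, are both sound. For comparison: the paper does not prove the lemma at all but refers to~\cite[Sec.~3]{Gra12}, and the route taken there (and used again later in this paper, e.g.\ in Step~3 of the proof of the residue theorem) is to quote directly the fact that the kernel of a map from a reflexive sheaf to a torsion-free sheaf is reflexive, \cite[Cor.~1.5]{Hartshorne80}, which disposes of (1) in one line since $\sA = \ker(\sE \to \sQ)$; parts (2) and (3) are then the same torsion-freeness argument you give. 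So your proof is a genuinely self-contained re-derivation of the Hartshorne input rather than a citation of it: what you lose in brevity you gain in transparency, and you also make explicit the useful point that (2) and (3) use only torsion-freeness of $\sQ$ and not the reflexivity statement (1). The only points deserving care, which you do address, are that the extension/$(S_2)$ criterion for reflexivity requires $X$ normal, and that in the depth argument the codimension $\le 1$ points are covered by torsion-freeness of $\sA$.
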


\section{Factorizing resolutions} \label{sec fact res}

It is well-known that in characteristic zero, the MMP can be used to obtain log crepant partial resolutions for log canonical pairs (called ``minimal dlt models'', ``dlt blowups'', or ``dlt modifications'').
See for example~\cite[Thm.~3.1]{KK10}.
Here we would like to point out that the same argument also works for surfaces over arbitrary fields.
The reason is that the MMP for log canonical surfaces is very well developed~\cite{Tanaka18}.
In fact, our proof is even simpler than the one in~\cite{KK10} because we do not have to perturb the dlt pair of interest into a linearly equivalent klt pair.

Unlike~\cite{KK10}, we are not only interested in the end product of the MMP (in the notation below, the map $f$), but also in the intermediate steps.
Note that since we are on a surface, we can use Mumford's pullback to get the same result also for \emph{numerically} log canonical pairs~\cite[Notation~4.1]{KM98}.
This will be important later.

\begin{thm} \label{fact res}
Let $(X, D)$ be a numerically log canonical surface pair and $\pi \from Y \to X$ a log resolution, with exceptional divisor $E$.
Then $\pi$ can be factored into a sequence of maps as follows:
\[ \xymatrix{
Y = Y_0 \ar^-{\phi_0}[r] \ar_\pi[ddrr] & Y_1 \ar^-{\phi_1}[r] & \cdots\cdots\cdots \ar^-{\phi_{r-2}}[r] & Y_{r-1} \ar^-{\phi_{r-1}}[r] & Y_r = Z \ar^-f[ddll] \\
\\
& & X
} \]
such that, setting $\wt D_0 \defn \pi\inv_* D + E$ and $\wt D_{i+1} \defn (\phi_i)_* \wt D_i$, the following properties hold:
\begin{enumerate}
\item\label{fact res.1} For any $0 \le i \le r$, the pair $(Y_i, \wt D_i)$ is dlt and $Y_i$ is \Q-factorial.
\item\label{fact res.2} For any $0 \le i \le r - 1$, the exceptional locus of $\phi_i$ is irreducible.
\item\label{fact res.3} The map $f$ is \emph{(numerically) log crepant}, that is, $K_Z + \wt D_r = f^* (K_X + D)$.
\end{enumerate}
\end{thm}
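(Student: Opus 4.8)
The plan is to run the $(K_Y + \wt D_0)$-minimal model program over $X$, relying on the MMP for log canonical surfaces~\cite{Tanaka18}, and to show that it terminates with a model over $X$ on which $K + \wt D$ has become log crepant. By construction $(Y_0, \wt D_0) = (Y, \pi\inv_* D + E)$ is an snc pair, since $\pi$ is a log resolution; in particular it is dlt, and $Y_0 = Y$ is smooth, hence \Q-factorial. This establishes~\labelcref{fact res.1} for $i = 0$. Writing $K_{Y_0} + \wt D_0 = \pi^*(K_X + D) + F_0$ with $\pi^*$ Mumford's numerical pullback --- available because $(X, D)$ is numerically log canonical and we are on a surface --- the divisor $F_0 = \sum_j \big( 1 + a(E_j; X, D) \big) E_j$ is $\pi$-exceptional, and effective precisely by numerical log canonicity.

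Inductively, suppose $(Y_i, \wt D_i)$ is a \Q-factorial dlt surface pair carrying a projective birational morphism $\pi_i \from Y_i \to X$. If $K_{Y_i} + \wt D_i$ is not $\pi_i$-nef, the cone and contraction theorems for surfaces produce a $(K_{Y_i} + \wt D_i)$-negative extremal ray $R \subset \NE{Y_i/X}$; as the composite $Y_i \to X$ is birational, the contraction $\phi_i \from Y_i \to Y_{i+1}$ of $R$ over $X$ is divisorial, and its exceptional locus $\Exc(\phi_i)$ is a single irreducible curve --- as always happens for an extremal birational contraction of a \Q-factorial surface --- which is~\labelcref{fact res.2}. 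The standard theory (see e.g.~\cite{KM98, Kollar13}) then guarantees that $Y_{i+1}$ is again \Q-factorial and that $(Y_{i+1}, \wt D_{i+1})$, with $\wt D_{i+1} = (\phi_i)_* \wt D_i$, is dlt; the $(K + \wt D)$-negativity of the contraction is precisely what is needed here, so~\labelcref{fact res.1} persists. Since $\phi_i$ contracts an extremal ray, $\rho(Y_{i+1}/X) = \rho(Y_i/X) - 1$, and as $\rho(Y_0/X)$ is finite the procedure terminates after some number $r$ of steps, with $K_Z + \wt D_r$ being $f$-nef, where $Z = Y_r$ and $f \from Z \to X$.

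It remains to verify~\labelcref{fact res.3}. Write $K_Z + \wt D_r = f^*(K_X + D) + F_r$, again with $f^*$ Mumford's pullback. Since $f$ is birational and each $f$-exceptional prime divisor occurs in $\wt D_r$ with coefficient $1$, the divisor $F_r$ is $f$-exceptional, and the coefficient of $E_j$ in it is $1 + a(E_j; X, D) \ge 0$; hence $F_r$ is effective. As $f^*(K_X + D)$ is numerically $f$-trivial, $F_r$ is moreover $f$-nef. An effective, $f$-exceptional, $f$-nef \Q-divisor vanishes by the negativity lemma~\cite{KM98}, so $F_r = 0$ and $K_Z + \wt D_r = f^*(K_X + D)$; when $(X, D)$ is genuinely log canonical this is an equality of \Q-divisors, hence also a \Q-linear equivalence over $X$.

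The entire substance of the argument is concentrated in correctly invoking the positive-characteristic surface MMP of~\cite{Tanaka18} --- the existence of extremal contractions over $X$, the preservation of \Q-factoriality and of the dlt property at each step, and termination --- together with the bookkeeping of Mumford's numerical pullback in the case where $K_X + D$ is only numerically \Q-Cartier. The remaining ingredients (irreducibility of each contracted locus, the discrepancy computation, and the final negativity-lemma step) are routine. In particular, and in contrast to~\cite{KK10}, no preliminary perturbation of $(X, D)$ into a klt pair is required, because the MMP can be run directly on the dlt pairs $(Y_i, \wt D_i)$.
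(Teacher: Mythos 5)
Your proof is correct, and while it follows the same overall strategy as the paper---run Tanaka's MMP for excellent surfaces relatively over $X$, then conclude with the Negativity Lemma---it differs in a genuine and interesting way in \emph{which} pair the MMP is run on. The paper decomposes the exceptional divisor according to discrepancies into $E^{-1}$, $E^{>-1}$, $E^0$, $E^{>0}$ and runs the MMP on the auxiliary $\eps$-perturbed pair $(Y,\, \pi\inv_* D + E^{-1} + (1+\eps)E^{>-1} + \eps E^0)$; pushing the resulting identity forward to $Z$, the Negativity Lemma kills $\phi_* E^{>-1}$, $\phi_* E^0$ and $\phi_* E^{>0}$, which forces every divisor of discrepancy $>-1$ (the $\eps E^0$ term is there precisely to catch the discrepancy-zero ones) to be contracted and yields~\labelcref{fact res.3}. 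You instead run the MMP directly on the pairs $(Y_i, \wt D_i)$ and apply negativity only at the end, to $F_r = \sum_j \big(1 + a(E_j; X, D)\big) E_j$, which is effective by numerical log canonicity and $f$-nef because $K_Z + \wt D_r$ is $f$-nef while the Mumford pullback $f^*(K_X + D)$ is $f$-numerically trivial; this gives~\labelcref{fact res.3} just as quickly and, as a by-product, shows that exactly the divisors of discrepancy $>-1$ disappear. What your variant buys is that~\labelcref{fact res.1} is immediate, since each $\phi_i$ is a step of the MMP for the pair $(Y_i, \wt D_i)$ itself, so dlt-ness and \Q-factoriality are preserved by the standard (characteristic-free) arguments; in the paper's setup the pairs that are automatically dlt at each step carry the perturbed boundary, so~\labelcref{fact res.1} as stated requires an extra observation. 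What the paper's discrepancy decomposition buys is an explicit bookkeeping of the removed divisors, in the spirit of the dlt-blowup construction of~\cite{KK10}. The remaining ingredients---birationality of each contraction over $X$, irreducibility of its exceptional locus on a \Q-factorial surface, termination via the drop in relative Picard rank---are handled identically in both arguments, and both rely on the same inputs (Tanaka's MMP, Mumford's pullback, and the surface Negativity Lemma).
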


\begin{proof}
Let $F_1, \dots, F_n$ be all the irreducible components of $E$, and consider the ramification formula $K_Y + \pi\inv_* D = \pi^* (K_X + D) + \sum_{i=1}^n a_i F_i$, where $\pi^*(-)$ denotes Mumford's pullback.
We then have
\begin{align} \label{496}
K_Y + \wt D_0 = \pi^* (K_X + D) + \sum_{i=1}^n (a_i + 1) F_i.
\end{align}
We may run the MMP on the dlt pair $(Y, \wt D_0)$ and obtain a minimal model $\phi \from Y \to Z$ over $X$~\cite[Thm.~1.1]{Tanaka18}.
This provides the maps in the statement to be proven.
Also,~\labelcref{fact res.1} and~\labelcref{fact res.2} are clear by construction.
It remains to show~\labelcref{fact res.3}.
To this end, push forward~\labelcref{496} to $Z$:
\begin{align} \label{504}
K_Z + \wt D_r = f^* (K_X + D) + \underbrace{\phi_* \left( \sum_{i=1}^n (a_i + 1) F_i \right)}_{\text{$f$-nef by construction}}.
\end{align}
The Negativity Lemma~\cite[Lemma~3.40]{KM98} implies that the underbraced term in the above formula is zero.
Hence~\labelcref{504} simplifies to~\labelcref{fact res.3}.
\end{proof}

\section{Adjunction and the different on dlt surface pairs} \label{adj diff dlt}

The different is a correction term that makes the adjunction formula work in the presence of singularities.
For a general treatment of the different, including the case of positive characteristic, see~\cite[Ch.~4]{Kollar13}.
On a surface, things are somewhat simpler, as explained in~\cite[Def.~2.34]{Kollar13}.

\begin{prpdfn}[Different on surfaces]
Let $X$ be a normal \Q-factorial surface and $B \subset X$ a reduced irreducible curve with normalization $\wb B \to B$.
Let $B'$ be a \Q-divisor that has no common components with $B$.
Then there is a canonically defined \Q-divisor $\Diff{\wb B}{B'}$ on $\wb B$, called the \emph{different}, such that
\[ (K_X + B + B')\big|_{\wb B} \; \sim_\Q \; K_{\wb B} + \Diff{\wb B}{B'}. \]
\end{prpdfn}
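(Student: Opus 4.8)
The plan is to reduce the statement to ordinary adjunction on a smooth surface by passing to a log resolution, and then to verify that the $\Q$-divisor on $\wb B$ produced this way is independent of the resolution chosen.

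Since $X$ is $\Q$-factorial, the $\Q$-divisor $K_X + B + B'$ is $\Q$-Cartier, so its pullback along the finite morphism $j \from \wb B \to B \inj X$ is a well-defined $\Q$-linear equivalence class on $\wb B$; what we must do is exhibit a canonical $\Q$-divisor inside it. Fix a log resolution $\pi \from Y \to X$ of $(X, B + B')$ — one exists by resolution of surface singularities over any field — with exceptional divisor $E = \sum_i E_i$, and let $\wt B, \wt B'$ be the strict transforms of $B, B'$. Because $\wt B + \wt B' + E$ has snc support, $\wt B$ is smooth, and since $\wt B \to B$ is proper, birational and quasi-finite with normal source and reduced target it is the normalization, so $\wt B \isom \wb B$. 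The assumption that $B'$ and $B$ share no component guarantees that $\wt B$ is not a component of $\wt B'$, so $\wt B'\big|_{\wt B}$ is an honest $\Q$-divisor. Writing the ramification formula
\[ K_Y + \wt B \;\sim_\Q\; \pi^*(K_X + B + B') - \wt B' + \textstyle\sum_i a_i E_i, \qquad a_i \in \Q, \]
restricting it to $\wt B$, and applying adjunction $(K_Y + \wt B)\big|_{\wt B} \sim K_{\wt B}$ on the smooth surface $Y$, one gets
\[ j^*(K_X + B + B') \;\sim_\Q\; K_{\wt B} + \Big( \wt B' - \textstyle\sum_i a_i E_i \Big)\Big|_{\wt B}. \]
We then \emph{define} $\Diff{\wb B}{B'}$ to be the $\Q$-divisor $\big(\wt B' - \sum_i a_i E_i\big)\big|_{\wt B}$, transported to $\wb B$ along the isomorphism $\wt B \isom \wb B$.

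It remains to see this does not depend on $\pi$. Any two log resolutions are dominated by a third, so it suffices to treat a single point blow-up $\sigma \from Y' \to Y$ and check the correction term is unchanged. If the centre $q$ does not lie on $\wt B$ this is immediate, since the new exceptional curve misses the strict transform of $\wt B$ and the discrepancies of the old $E_i$ are unaffected along it. If $q \in \wt B$, a direct bookkeeping — using $K_{Y'} \sim \sigma^* K_Y + E_0$, $\mult_q \wt B = 1$, and the projection formula for intersection with the strict transform of $\wt B$ — shows that the $E_0$-contribution to the restriction cancels exactly against the change in $\wt B'\big|$, leaving the correction term intact. Hence $\Diff{\wb B}{B'}$ is well-defined, and since the whole construction is Zariski-local on $X$ it is canonical; along the snc locus of $(X, B)$ it recovers the classical $\Diff{\wb B}{B'} = B'\big|_B$, which also fixes the normalization of the definition.

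The one genuinely delicate point is this well-definedness check, and within it the blow-up centred on $\wt B$ itself: one must be sure the discrepancy coefficients and their restrictions to the strict transform of $\wt B$ behave as claimed, and in particular that the conductor-type contribution coming from the singularities of $B$ is correctly absorbed into the $a_i E_i$ terms rather than requiring a separate Riemann--Hurwitz computation. This is precisely the feature that makes the argument insensitive to the characteristic: it uses only resolution of surface singularities, available unconditionally in characteristic $p$, together with adjunction on smooth surfaces, and never invokes ramification theory for the (generically trivial) normalization $\wb B \to B$.
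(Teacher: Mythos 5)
Your construction is correct and is essentially the same as the one behind the statement in the paper: the paper offers no proof of its own but quotes this Proposition/Definition from \cite[Def.~2.34 and Ch.~4]{Kollar13}, where the surface different is set up precisely this way---pass to a log resolution, use adjunction for the smooth curve $\wt B \isom \wb B$ on the smooth surface $Y$, and take the restricted correction term $\big(\wt B' - \sum_i a_i E_i\big)\big|_{\wt B}$. The bookkeeping you flag as delicate does check out: for a blow-up at $q \in \wt B$ the new exceptional curve has discrepancy $a_0 = -\mult_q \wt B' + \sum_i a_i \mult_q E_i$ (crepant pullback of $\wt B + \wt B' - \sum_i a_i E_i$, using $\mult_q \wt B = 1$), and inserting this into the restriction along the strict transform of $\wt B$ cancels exactly the changes in $\wt B'\big|_{\wt B}$ and $E_i\big|_{\wt B}$, so the definition is independent of the chosen resolution.
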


We will mostly be interested in the case where $(X, B)$ is dlt, in which case $B$ is regular by \cref{dlt structure} below.
Hence $\wb B = B$ and we may write
\[ \Diff B{B'} = \sum_{x \in B} \delta_x \cdot [x], \]
where $\delta_x \ne 0$ only for points $x$ that are singular on $X$ or contained in $\supp B'$.
We need to compute the coefficients $\delta_x$ in relation to the singularities of $(X, B + B')$.
In positive characteristic this is only possible under the following additional tameness hypothesis:

\begin{dfn}[Tamely and fiercely dlt pairs] \label{tamely dlt}
A pair $(X, D)$ over a field of characteristic $p$ is called \emph{tamely dlt} if the following hold:
\begin{enumerate}
\item\label{768} $(X, D)$ is reduced and dlt,
\item\label{595} $K_X + D$ is \zp-Cartier (see \cref{sec notation}).
\end{enumerate}
If Condition~\labelcref{768} is satisfied but~\labelcref{595} is not, the pair is said to be \emph{fiercely dlt}.
\end{dfn}

In the case $p = 0$, we recover the usual notion of a reduced dlt pair.
The main result concerning the different is then as follows.
The reader may like to compare this to~\cite[Thm.~3.36]{Kollar13}, where a similar formula is proven under slightly different assumptions.

\begin{thm}[Computation of the different] \label{compute diff}
Let $(X, D)$ be a tamely dlt surface pair, and let $P \subset D$ be an irreducible component.
Write
\[ \Diff P{D - P} = \sum_{x \in P} \delta_x \cdot [x] \]
as above.
Then, referring to the dichotomy in \cref{dlt structure} below:
\begin{enumerate}
\item\label{diff snc} If locally at $x$,~\labelcref{dlt snc} holds, then $\delta_x = 1$.
\item\label{diff plt} If locally at $x$,~\labelcref{dlt plt} holds, then $\delta_x = 1 - \frac1m$, where $m$ is the Cartier index of $K_X + D$ at $x$.
\end{enumerate}
\end{thm}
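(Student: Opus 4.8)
The plan is to reduce the computation to a purely local statement about the surface singularity $\germ x{X, D}$, and then to exploit the classification of two-dimensional dlt (equivalently, log terminal) singularities together with the tameness hypothesis. Since $\delta_x$ depends only on an analytic (or \'etale-local, or formal) neighbourhood of $x$, I would first pass to the strict henselization $R = \O{X,x}^{\mathrm{sh}}$ (using that $k$ — or rather the residue field — plays no essential role here) and write $D = P + D'$ locally, where $D' = (D - P)$ has no component equal to $P$. Recall that by \cref{dlt structure}, at each point $x \in P$ the pair $(X, D)$ is either snc (case~\labelcref{dlt snc}) or plt with $X$ having a cyclic quotient singularity and $D = P$ locally reduced through $x$ (case~\labelcref{dlt plt}); in the plt case $D'$ must be empty near $x$ because an irreducible component of the boundary of a plt pair through a singular point is forced by the classification to be the only boundary component there. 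So in case~\labelcref{diff plt} we are really computing $\Diff P\emptyset$ at a cyclic quotient singularity.

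For part~\labelcref{diff snc}: if $(X, D)$ is snc at $x$, then $X$ is regular there, $P$ is a regular curve, and $D - P$ meets $P$ transversally (or not at all). The adjunction formula in the snc case is classical — $(K_X + P + D')|_P = K_P + (D'|_P)$, i.e. $\delta_x$ is the multiplicity of $D'|_P$ at $x$, which is $0$ or $1$; but since $\delta_x \neq 0$ only when $x \in \supp D'$ (or $x \in \Sing X = \emptyset$), we get $\delta_x = 1$ whenever it is nonzero. This is straightforward and I would dispatch it quickly, perhaps just citing \cite[Sec.~4]{Kollar13} for the snc adjunction.

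For part~\labelcref{diff plt}, which is the main content: here $X$ has a cyclic quotient singularity $\frac1n(1, a)$ at $x$ for suitable coprime $n, a$, and $D = P$ locally, with $P$ regular. The classical formula for the different at a cyclic quotient singularity (see \cite[Sec.~3.9--3.10 or Prop.~3.9.something]{Kollar13}, or Shokurov/Kollár's explicit computations) gives $\delta_x = 1 - \frac1n$. The subtle point in positive characteristic is that the formula $\Diff P0 = (1 - \frac1n)[x]$ and the identification of $n$ with the Cartier index of $K_X + D$ at $x$ both require that the local fundamental group — or rather the relevant cyclic cover — be \emph{tame}, i.e. that $p \nmid n$. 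This is exactly where hypothesis~\labelcref{595} enters: tameness of the dlt pair forces $p \nmid m$ where $m$ is the Cartier index of $K_X + D = K_X + P$ at $x$, and one checks (via the local structure of $\O X(K_X + P)$ on the cyclic quotient, e.g. by passing to the canonical index-one cover, which is \'etale in codimension one and, being of degree $m$ coprime to $p$, is tame) that $m = n$. With $p \nmid n$ the index-one cover $\wt X \to X$ is tamely ramified, $\wt X$ is regular (it is the cyclic quotient's canonical cover), and the usual Hurwitz-type computation of $\Diff P0$ goes through verbatim as over $\C$, yielding $\delta_x = 1 - \frac1m$.

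\textbf{Main obstacle.} The hard part is the verification, in characteristic $p$, that the Cartier index $m$ of $K_X + D$ at $x$ coincides with the order $n$ of the local cyclic quotient, and that the index-one cover is tame and regular — all of which can fail when $p \mid n$ (this is precisely the phenomenon behind the failure of \cref{main lc} in low characteristics, cf. the $D_n$, $E_6$, $E_7$, $E_8^0$ examples in \cref{sharp}). Once tameness is in hand, the rest is the standard quotient-singularity bookkeeping. I would structure the proof so that the tame cyclic cover is introduced early, reduce everything to the regular cover $\wt X$, and then quote (or reprove in a line) the transversal/snc adjunction there to transfer back down.
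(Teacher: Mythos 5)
Your overall strategy -- pass to the index-one cover of $K_X + D$, observe that tameness makes it separable, reduce to a regular cover, and do the ramification computation there -- is exactly the route the paper takes (via \cref{dlt quotient} and the explicit residue computation). The snc case~\labelcref{diff snc} is handled the same way in both.

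However, in case~\labelcref{diff plt} there is a genuine gap: you invoke ``the classification of two-dimensional dlt singularities'' to assert that $\germ x{X,D}$ is a cyclic quotient singularity $\tfrac1n(1,a)$, and you then justify the regularity of the index-one cover by saying it ``is the cyclic quotient's canonical cover.'' In positive characteristic this classification-as-tame-quotients is not available off the shelf, and your argument is circular at precisely the delicate point: the fact that the plt germ is a \emph{tame} cyclic quotient of a regular surface, with the index-one cover of $K_X+D$ realizing it and with $n$ equal to the Cartier index $m$, is a \emph{consequence} of the tameness hypothesis that has to be proved, not an input one can quote. (That it genuinely fails without tameness is shown by \cref{no zp no restr}: the Veronese cone with its plt boundary is an inseparable $\mu_p$-quotient, not a tame cyclic quotient, and the index-one ``cover'' is not separable.) The paper closes this gap without any classification: by \labelcref{595} the index-one cover $\gamma \from V \to U$ is separable, quasi-\'etale and cyclic; the pulled-back pair is again plt by \cite[Cor.~2.43]{Kollar13}, its log canonical divisor is Cartier so all discrepancies are integral, hence the pair $(V, \gamma^*D)$ is canonical, and then \cite[Thm.~2.29]{Kollar13} forces it to be snc -- in particular $V$ is regular and the degree of the cover is $m$ by construction. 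Only after this does the ``Hurwitz-type'' step make sense; note also that it is not quite verbatim the characteristic-zero computation: one compares residues of $m$-pluricanonical generators under the tame ramification $\gamma_D^*(t) = \eps u^m$, and the hypothesis $p \nmid m$ is used a second time to ensure the leading coefficient $m^m$ does not vanish, which is what pins down $k = 1 - m$ and hence $\delta_x = 1 - \tfrac1m$. As written, your proposal leaves both the regularity of the cover and the identification $n = m$ unproven, and these are the actual content of the theorem.
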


\subsection{The local structure of dlt surfaces}

Locally, dlt surface pairs are in some sense quite simple (even if they are fierce):

\begin{prp}[Dichotomy for dlt surfaces] \label{dlt structure}
Let $(X, D)$ be a reduced dlt surface pair, and let $x \in \supp D$ be any point.
Then either one of the following holds:
\begin{enumerate}
\item\label{dlt snc} The pair $(X, D)$ is snc at $x$, and $x$ is contained in exactly two components of $D$.
\item\label{dlt plt} The divisor $D$ is regular at $x$ and the pair $(X, D)$ is plt at $x$.
\end{enumerate}
In particular, every irreducible component of $D$ is regular.
\end{prp}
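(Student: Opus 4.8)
The plan is to work locally at $x$ and separate cases according to whether $X$ is regular at $x$. First suppose $X$ is regular at $x$. Since $(X,D)$ is dlt and $D$ is reduced with $x \in \supp D$, by definition of dlt there is a log resolution of $(X,D)$ that is an isomorphism over a neighbourhood of the generic point of every divisor with discrepancy $\le -1$; concretely, since $(X,D)$ is log smooth in codimension one and dlt, the pair must be snc in a neighbourhood of $x$. In the surface case this means $D$ is, near $x$, a union of regular curves meeting transversally, and the number of local branches through $x$ is at most $2$ (otherwise three regular curves through a smooth surface point cannot form an snc divisor, and in any case a suitable blow-up would produce a discrepancy $\le -1$, contradicting dlt). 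If $D$ has exactly two branches through $x$ we are in case~\labelcref{dlt snc}; if it has one branch, then $D$ is regular at $x$ and $(X,D)$ is even snc, hence plt at $x$, so case~\labelcref{dlt plt} holds.

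Now suppose $X$ is singular at $x$. The key point is that a dlt surface pair has quotient singularities along $\supp D$, and more precisely the classification of dlt (equivalently, numerically dlt) surface singularities shows that if $(X,D)$ is dlt and $X$ is singular at $x \in \supp D$, then $D$ cannot have two branches through $x$ and cannot be singular at $x$: otherwise the dual graph forces a discrepancy $\le -1$. Thus through the singular point $x$ there passes exactly one branch of $D$, that branch is regular at $x$, and removing the boundary shows $(X,D)$ is plt at $x$ (a dlt surface pair with irreducible boundary through a singular point is automatically plt there, since plt and dlt coincide when the boundary is regular in codimension one and $X$ is $\Q$-factorial). This is case~\labelcref{dlt plt}. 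Combining the two cases, in all situations $D$ is either snc with two branches at $x$, or regular at $x$ with $(X,D)$ plt; in particular every irreducible component of $D$, being regular at each of its points, is regular.

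The main obstacle is the singular case: one needs the classification of dlt surface singularities (e.g.\ via the results on surface MMP singularities recalled in \cite{Kollar13}, notably the description in~\cite[Sec.~3.3]{Kollar13} or~\cite[Ch.~3]{KM98}) to rule out a reducible or singular boundary through a singular point and to upgrade dlt to plt there. One must also be careful that the argument uses only the characteristic-free part of that classification, which is available since the surface MMP works over arbitrary fields~\cite{Tanaka18}; no tameness hypothesis is needed for this structural statement, only for the subsequent computation of the different in \cref{compute diff}.
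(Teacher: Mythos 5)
Your dichotomy (whether $X$ is regular at $x$, rather than whether $(X,D)$ is snc at $x$) can be made to work, but the decisive step in the singular case rests on a false principle. You justify plt-ness at a singular point $x$ by asserting that ``plt and dlt coincide when the boundary is regular in codimension one and $X$ is \Q-factorial''. This is not true: the pair $\big( \A2_k, \set{xy = 0} \big)$ is \Q-factorial and dlt, its boundary is regular in codimension one, yet it is not plt at the origin (the blow-up there has discrepancy $-1$). What is actually needed --- and what drives the paper's two-line proof --- is the definitional fact that dlt constrains precisely the divisors whose center meets the non-snc locus: at any point where $(X,D)$ fails to be snc (in particular at any singular point of $X$), every exceptional divisor centered there has discrepancy $> -1$, so after shrinking away the finitely many other non-snc points and snc double points of $D$, the pair is plt at $x$ \emph{by definition} of dlt. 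Regularity of $D$ at $x$ is then a \emph{consequence} of plt-ness, via \cite[3.35]{Kollar13}. Your argument runs the other way: you first extract regularity and irreducibility of $D$ at $x$ from ``the classification of dlt surface singularities'' and then try to deduce plt-ness from it; this inverts the logical order (the relevant classification statements are about (numerically) plt pairs, so invoking them presupposes exactly the plt reduction you still owe) and, as written, leans on the incorrect equivalence above. The side claim that a dlt surface pair has quotient singularities along $\supp D$ should also not be asserted casually in positive characteristic --- the paper only gets an (\'etale-local, separable, cyclic) quotient structure under the tameness hypothesis, in \cref{dlt quotient} --- but it is not needed here anyway.

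Your smooth-point case is correct in substance: the blow-up discrepancy estimates rule out three or more branches, tangencies, cusps, and also a node of a single irreducible component (which is lc but not dlt, and which you should exclude explicitly, since otherwise ``two branches at $x$'' does not yet give the two \emph{components} required in~\labelcref{dlt snc}); one should also note that these contradictions use that the center of the blown-up point lies in the non-snc locus, as dlt restricts only such divisors. By contrast, the paper trichotomizes on snc-ness at $x$ directly, so both the smooth and the singular situations collapse into the single observation ``not snc at $x$ $\Rightarrow$ plt at $x$ by definition, hence $D$ regular at $x$ by \cite[3.35]{Kollar13}''; I would recommend repairing your singular case along these lines.
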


\begin{proof}
Assume that we are not in case~\labelcref{dlt snc}.
Then either $(X, D)$ is snc at $x$, but $D$ has only one component at $x$.
In this case,~\labelcref{dlt plt} clearly holds.
Or the pair $(X, D)$ is not snc at $x$, in which case it is plt at $x$ by definition.
Regularity of $D$ at $x$ then follows from~\cite[3.35]{Kollar13}.
\end{proof}

In the following corollary, the crucial point is the separability of the maps $\gamma_\alpha$.
Note that the $U_\alpha$ cover only $\supp D$ and not all of $X$.

\begin{cor}[Dlt surfaces as quotients] \label{dlt quotient}
Let $(X, D)$ be a tamely dlt surface pair.
Then there exist finitely many Zariski-open subsets $\set{U_\alpha} _{\alpha \in I}$ of $X$ that cover $\supp D$ and admit maps
\[ \gamma_\alpha \from V_\alpha \to U_\alpha \quad \text{finite quasi-\'etale separable cyclic Galois} \]
such that the pairs $(V_\alpha, \gamma_\alpha^* D)$ are snc for all indices $\alpha \in I$.
\end{cor}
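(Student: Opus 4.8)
The plan is to work locally around each point $x \in \supp D$, using the dichotomy of \cref{dlt structure} to reduce to two explicit cases, and then to take a common refinement. If $(X, D)$ is snc at $x$ — case~\labelcref{dlt snc} — we simply take $U_\alpha$ to be an snc neighbourhood of $x$ and $\gamma_\alpha = \id$; this is trivially finite quasi-étale separable cyclic Galois (with trivial group). The substantial case is~\labelcref{dlt plt}, where $D$ is regular at $x$ and $(X, D)$ is plt there. In that situation $x$ is (if singular) a cyclic quotient singularity: by the classification of plt surface singularities, the local index-one cover associated to $K_X + D$ realizes a neighbourhood of $x$ as a quotient $V \to U$ by the cyclic group $\mu_m$, where $m$ is the Cartier index of $K_X + D$ at $x$. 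The key point demanded in the statement is that this cover is \emph{separable}, and this is exactly where the tameness hypothesis — $K_X + D$ being \zp-Cartier, so $p \nmid m$ — is used: the degree of $\gamma_\alpha$ is not divisible by $p$, hence $\mu_m$ is étale over the base field (which we may assume contains enough roots of unity after a further harmless étale localization, or argue directly with the group scheme $\mu_m$), and the cover is tamely ramified precisely along the locus where it branches. That the cover is quasi-étale (étale in codimension one away from $D$) is built into the construction of the index-one cover; that $(V_\alpha, \gamma_\alpha^* D)$ is snc follows because pulling back under the cyclic cover the plt point $x$ becomes a smooth point and the strict transform of $D$ becomes a smooth coordinate hyperplane.

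Concretely, I would proceed as follows. First, cover $\supp D$ by finitely many affine opens $U_\alpha$, each containing at most one point $x_\alpha$ that is either singular on $X$ or a triple-intersection-type point of $D$ (possible since $\supp D$ is noetherian and such points are isolated); shrinking, we may assume on each $U_\alpha$ the dichotomy holds uniformly, i.e. either $(U_\alpha, D|_{U_\alpha})$ is snc, or $D|_{U_\alpha}$ is irreducible and regular with $(U_\alpha, D|_{U_\alpha})$ plt. Second, in the snc case set $V_\alpha = U_\alpha$. Third, in the plt case, let $m$ be the Cartier index of $K_X + D$ at $x_\alpha$; after shrinking $U_\alpha$ we may assume $\O{U_\alpha}(-m(K_X + D))$ is trivial and pick a generator, which defines the cyclic cover $\gamma_\alpha \from V_\alpha \to U_\alpha$ of degree $m$. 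By \cref{compute diff}\labelcref{diff plt} together with the plt hypothesis, and by Kollár's description of plt pairs via their index-one covers (\cite[Thm.~2.34 / §3.35]{Kollar13}), the pair $(V_\alpha, \gamma_\alpha^{-1}(D))$ has a smooth underlying surface and $\gamma_\alpha^{-1}(D)$ is smooth; since $\gamma_\alpha$ is Galois its reduced pullback equals $\gamma_\alpha^* D$ up to the ramification coefficient, and one checks the snc conclusion directly. Finally, separability of $\gamma_\alpha$ is immediate from $p \nmid m = \deg \gamma_\alpha$.

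I expect the main obstacle to be making the index-one cover construction go through cleanly \emph{in positive characteristic}: in characteristic zero one freely uses the structure theory of plt/quotient singularities, but here one must verify that (i) the cyclic cover attached to a $p$-torsion-free Cartier-index-$m$ divisor is still a $\mu_m$-Galois cover that is étale in codimension one away from the branch divisor, and (ii) the resulting total space is smooth at the preimage of a plt point — i.e. that the positive-characteristic analogue of "plt surface singularities are cyclic quotients" holds, which is available but needs the tameness $p \nmid m$ to avoid wild ramification pathologies. Once tameness is invoked, standard descent for tame cyclic covers (e.g. via Abhyankar's lemma on the smooth locus) does the rest; the snc verification on $V_\alpha$ is then a local toric computation. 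A secondary, purely bookkeeping, obstacle is arranging finiteness of the index set and the "one bad point per chart" normalization, but this is routine given that $\supp D$ is a curve and its bad points are isolated.
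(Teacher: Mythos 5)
Your overall skeleton matches the paper's: apply the dichotomy of \cref{dlt structure}, take $\gamma_\alpha = \id$ in case~\labelcref{dlt snc}, take the local index-one cover of $K_X + D$ in case~\labelcref{dlt plt}, and deduce separability from the \zp-Cartier hypothesis because the degree of the cover is the Cartier index $m$, which is prime to $p$. The genuine gap is in the one substantial remaining step, namely that $(V_\alpha, \gamma_\alpha^* D)$ is snc. You assert this via ``the classification of plt surface singularities'' and the slogan that a plt point is a tame cyclic quotient whose index-one cover is smooth with smooth boundary --- and you yourself flag verifying this in positive characteristic as the main obstacle. But that assertion is exactly the content to be proven here: deferring it to structure theory of quotient singularities (or to a ``local toric computation'' and Abhyankar's lemma) leaves the proof incomplete, and it also runs against the paper's architecture, where classification is deliberately confined to \cref{sec main lc}. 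In addition, your appeal to \cref{compute diff} would be circular: in the paper, the proof of \cref{compute diff} uses the covers produced by this very corollary, so it cannot be an input to it.

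The paper closes this step with a short, classification-free argument you could substitute: the pulled-back pair $(V_\alpha, \gamma_\alpha^* D)$ is again plt by crepant pullback along the finite quasi-\'etale map $\gamma_\alpha$ \cite[Cor.~2.43]{Kollar13}; since $K_{V_\alpha} + \gamma_\alpha^* D$ is Cartier, all discrepancies are integers $> -1$, hence $\ge 0$, so the pair is canonical; and a canonical surface pair whose reduced boundary passes through the unique point $y \in \gamma_\alpha\inv(x)$ is snc at $y$ by \cite[Thm.~2.29]{Kollar13}. No smoothness of $V_\alpha$ has to be established beforehand --- it falls out of this chain --- and no positive-characteristic structure theory of plt singularities is needed beyond the tameness already used for separability.
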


\begin{proof}
Let $x \in \supp D$ be any point, and apply \cref{dlt structure}.
If we are in case~\labelcref{dlt snc}, we may take $\gamma_\alpha = \id$ and there is nothing to show.
In case~\labelcref{dlt plt}, let $\gamma_\alpha$ be a local index one cover with respect to $K_X + D$.
Then $\gamma_\alpha$ by construction has all the properties claimed, except separability.
But separability is also clear because of our assumption that $K_X + D$ is \zp-Cartier.
It remains to see that $(V_\alpha, \gamma_\alpha^* D)$ is snc.
To this end, note that this pair is again plt~\cite[Cor.~2.43]{Kollar13}.
Furthermore, as $K_{V_\alpha} + \gamma_\alpha^* D$ is Cartier, the discrepancies are actually integral and hence non-negative.
The pair $(V_\alpha, \gamma_\alpha^* D)$ is therefore canonical.
Let $y \in V_\alpha$ be the unique point in $\gamma_\alpha\inv(x)$.
Then $y \in \supp \gamma_\alpha^* D$.
The claim now follows from~\cite[Thm.~2.29]{Kollar13}.
\end{proof}

\subsection{Proof of \cref{compute diff}}

Case~\labelcref{diff snc} is clear, hence we concentrate on Case~\labelcref{diff plt}.
We follow the local computational approach as illustrated in~\cite[Ex.~4.3]{Kollar13}.
Let $\gamma \from V \to U$ be a map as in \cref{dlt quotient}, where $x \in U$, and put
\begin{align*}
D_V & = \gamma^* D, \quad \text{a regular curve,} \\
\gamma_D & = \gamma\big|_{D_V}, \\
\sigma & = \text{a local generator for $m \big( K_U + D\big|_U \big)$}, \\
\sigma_V & = \gamma^* \sigma, \\
\omega & = \text{a local generator for $K_V + D_V$}.
\end{align*}
Then $\sigma_V = \omega^{[m]}$ up to a unit, that is, for a suitable choice of $\omega$.
It follows that
\begin{equation} \label{628}
\res(\sigma_V) = \res(\omega^{[m]}) = \res(\omega)^m = \omega'^m,
\end{equation}
where $\omega'$ is a local generator for $\can{D_V}$.
On the other hand, as $\gamma$ is quasi-\'etale and the residue map (in the snc case) commutes with \'etale pullback, we have
\begin{equation} \label{633}
\res(\sigma_V) = \gamma_D^* \res(\sigma).
\end{equation}
Let $t \in \O{D, x}$ be a local parameter of $D$ at $x$, and let $u \in \O{D_V, y}$ be a local parameter of $D_V$ at the unique point $y$ lying over $x$ such that $\omega' = \d u$.
Then $\gamma_D^*(t) = \eps u^m$ for some unit $\eps \in \O{D_V, y}^\times$.
Hence, writing $\res(\sigma) = t^k (\d t)^m$ up to a unit, with $k$ to be determined, combining~\labelcref{628} with~\labelcref{633} gives
\begin{align*}
\eps^k u^{km} (\eps m u^{m-1} \d u + u^m \d \eps)^m & = \big( \eps^{k + m} m^m u^{m(k + m - 1)} + \cdots \big) \cdot (\d u)^m \\
& = (\d u)^m,
\end{align*}
where the dots stand for terms involving higher powers of $u$.
By the tameness assumption, $m \ne 0$ in the ground field and we obtain $m(k + m - 1) = 0$.
So $k = 1 - m$ and $\delta_x = -k/m = 1 - 1/m$, as claimed. \qed

\section{Residues and restriction on dlt surfaces} \label{res dlt}

\stepcounter{thm}
In this section, we prove Theorems~\labelcref{res} and~\labelcref{restr}.

\subsection{Proof of \cref{res}}

The proof is divided into four steps.

\subsubsection*{Step 1: Symmetric residue maps}

First we will construct the maps $\res_P^m$.
So fix a natural number $m$ and consider the $m$-th symmetric power of the residue map on the snc locus of $(X, D)$.
Pushing it forward to all of $X$ yields a map
\begin{equation} \label{743}
\Sym^{[m]} \Omegarl X1D \lto \O P(*\rp P^c.)
\end{equation}
to the sheaf of rational functions on $P$ with arbitrarily high order poles along $\supp \rp P^c.$.
We need to show that~\labelcref{743} factorizes via $\Sym^{[m]} \Omegarl X1D \to \O P$, for this will be the desired map $\res_P^m$.
So let $\sigma$ be an arbitrary local section of $\Sym^{[m]} \Omegarl X1D$, defined on an open set $U \subset X$.
Let $\wt\sigma$ be its image under~\labelcref{743}, and (after possibly shrinking $U$) pick a map $\gamma \from V \to U$ as in \cref{dlt quotient}.

We will employ the following regularity criterion: $\wt\sigma \in \Hnought U.\O P.$ if and only if $\gamma^*(\wt\sigma) \in \Hnought V.\O{P_V}.$, where $P_V \defn \gamma^* P$ and $D_V \defn \gamma^* D$.
This criterion holds because $P$ is regular, in particular normal.
(Recall that if $A \subset B$ is a finite extension of normal domains and $Q(A)$ is the fraction field of $A$, then $B \cap Q(A) = A$.
In our situation, $A$ is a local ring of $P$ and $B$ is a suitable local ring of $P_V$.)

By \cref{dlt quotient}, the pair $(V, D_V)$ is snc, hence $\Omegal V1{D_V}$ is locally free and we obtain a residue map
\[ \res_{P_V}^m \from \Sym^{[m]} \Omegarl V1{D_V} = \Sym^m \Omegal V1{D_V} \lto \Sym^m \O{P_V} = \O{P_V}. \]
Furthermore, note that $\gamma \from (V, D_V) \to (U, D|_U)$ is a ``morphism of logarithmic pairs'' in the sense of~\cite[Def.~2.4]{GKK10} (this simply means that $\gamma\inv(D|_U) = D_V$ set-theoretically).
Therefore, by~\cite[Remark~2.10]{GKK10} we can pullback $\sigma$ to a regular section of $\Sym^m \Omegal V1{D_V}$, at least off the preimage of the non-snc locus $\Sing{(U, D|_U)}$.
But $\gamma$ is finite (in particular equidimensional), so this preimage still has codimension two in $V$.
Hence $\gamma^* \sigma$ is regular on all of $V$.
In other words,
\[ \gamma^* \sigma \in \Hnought V.\; \Sym^{[m]} \Omegarl V1{D_V}.. \]
Recall that the standard residue map commutes with \'etale pullback, and that $\gamma$ is \'etale over the general point of $P$.
So the two functions
\[ (\gamma|_{P_V})^* (\wt\sigma) \in \Hnought P_V.\O{P_V}(*\supp \gamma|_{P_V}^* {\rp P^c.}). \]
and
\[ \res_{P_V}^m (\gamma^* \sigma) \in \Hnought P_V.\O{P_V}. \]
agree on an open subset of $P_V$, hence everywhere.
This shows that $\wt\sigma$ is a regular function on $P_V$, as desired.

\subsubsection*{Step 2: Surjectivity}

It remains to show surjectivity of the maps $\res_P^m$.
This is a local question, so we may restrict ourselves to an open set $U \subset X$ admitting a map $\gamma \from V \to U$ as in \cref{dlt quotient}.
Let $G = \Gal(\gamma)$ be the Galois group of $\gamma$.
Start with the map
\[ \res_{P_V}^m \from \Sym^m \Omegal V1{D_V} \lto \O{P_V} \]
as before and note that we can also construct $\res_P^m$ by applying the functor $\gamma_*(-)^G$ to $\res_{P_V}^m$.
This means that we consider $U = \factor VG$ with the trivial $G$-action and look at the invariant sections of the relevant push-forward sheaves (which are $G$-sheaves in a natural way).
For more details, cf.~\cite[Appendix~A]{GKKP11}.

The claim now follows from the surjectivity of $\res_{P_V}^m$ (which is due to the fact that the pair $(V, D_V)$ is snc) and the exactness of the functor $\gamma_*(-)^G$.
This exactness holds because the order of $G$ is prime to~$p$ by the ``tamely dlt'' assumption, and therefore we have the usual Reynolds operator argument at our disposal.
Cf.~the characteristic zero version of this argument~\cite[Lemma~A.3]{GKKP11}.

\subsubsection*{Step 3: Residue sequence on $X$}

Next we prove the existence of sequence~\labelcref{res seq}.
The map $\res_P$ is of course nothing but the special case $m = 1$ of the maps just constructed.
By what we already know, we thus only need to show that its kernel is isomorphic to $\Omegarl X1{D-P}$.
But that kernel is a reflexive sheaf by~\cite[Cor.~1.5]{Hartshorne80}.
Furthermore it is isomorphic to $\Omegarl X1{D-P}$ on $\snc{(X, D)}$, by the usual residue sequence for snc pairs.
The isomorphism then extends to all of $X$ by reflexivity.

\subsubsection*{Step 4: Residue sequence on $P$}

Finally we turn to sequence~\labelcref{res P seq}.
Clearly, the reflexive restriction of $\res_P$ to $P$ is a surjective map $\res_P^P \from \Omegarl X1D \big|_P \ddual \lto \O P$, and it remains to show that its kernel is isomorphic to $\Omegal P1{\rd P^c.}$.
To this end, first note that there is a short exact sequence
\begin{equation} \label{ACD.1}
0 \lto \Omegarl X1D(-P) \ddual \lto \Omegarl X1D \lto \Omegarl X1D \big|_P \ddual \lto 0.
\end{equation}
In fact, the second map is surjective because on a regular curve, taking the double dual really just amounts to dividing out the torsion.
And by the same argument as in the previous step, the kernel is reflexive and thus isomorphic to $\Omegarl X1D(-P) \ddual$.

Consider now the commutative diagram with exact rows and columns depicted in Figure~\labelcref{857} \vpageref{857}.
\begin{figure}[!t]
\centerline{
\xymatrix@R=4ex{
& & 0 \ar[d] & 0 \ar@{..>}[d] \\
0 \ar[r] & \Omegarl X1D(-P) \ddual \ar[r] \ar@{=}[d] & \Omegarl X1{D-P} \ar^-{\restr_P}[r] \ar[d] & \Omegal P1{\rd P^c.} \ar@{..>}[d] \ar[r] & 0 \\
0 \ar[r] & \Omegarl X1D(-P) \ddual \ar[r] & \Omegarl X1D \ar[r] \ar^-{\res_P}[d] & \Omegarl X1D \big|_P \ddual \ar[r] \ar^-{\res_P^P}[d] & 0 \\
& & \O P \ar@{=}[r] \ar[d] & \O P \ar[d] \\
& & 0 & 0
} }
\caption{Diagram used in the proof of~\labelcref{res P seq}.} \label{857}
\end{figure}
The first row is the restriction sequence~\labelcref{restr seq}\footnote{Needless to say, the proof of \cref{restr} does not rely on \cref{res}---see \cref{restr proof} below.}, while the second row is~\labelcref{ACD.1}.
The middle column is~\labelcref{res seq}, the residue sequence on $X$.
The Snake Lemma then shows that the dotted arrow $\Omegal P1{\rd P^c.} \dashrightarrow \Omegarl X1D \big|_P \ddual$ exists, is injective, and that its image is exactly the kernel of $\res_P^P$.
The column on the right-hand side is therefore likewise exact, and it is precisely sequence~\labelcref{res P seq}. \qed

\subsection{Proof of \cref{restr}} \label{restr proof}

The proof of \cref{restr} is analogous to the proof of \cref{res}, hence we will only provide an outline, with most details omitted.
To begin with, if $(X, D)$ is snc then sequence~\labelcref{restr seq} reads
\[ 0 \lto \Omegal X1D(-P) \lto \Omegal X1{D-P} \xrightarrow{\;\restr_P\;} \Omegal P1{P^c} \lto 0 \]
and this exists by~\cite[2.3(c)]{EV92}.
In particular, we already have $\restr_P$ and its $m$-th symmetric power on the snc locus $\snc{(X, D)}$.
Pushing forward this symmetric power to all of $X$, we obtain a map
\begin{equation} \label{821}
\Sym^{[m]} \Omegarl X1{D-P} \lto \O P(mK_P)(* \rp P^c.),
\end{equation}
and we have to show that it factors via a map
\[ \Sym^{[m]} \Omegarl X1{D-P} \lto \O P(mK_P + \rd mP^c.), \]
for this will be the desired map $\restr_P^m$.
To this end, we have the following criterion:

\begin{clm} \label{993}
Notation as in the previous proof.
A local section $\wt\sigma$ of $\O P(mK_P)(* \rp P^c.)$ is contained in $\O P(mK_P + \rd mP^c.)$ if and only if $\gamma^*(\wt\sigma)$ is a regular section of $\O{P_V}(mK_{P_V} + mP_V^c)$, where $P_V^c \defn \Diff{P_V}{D_V - P_V} = (D_V - P_V)\big|_{P_V}$.
\end{clm}

\begin{proof}[Proof of \cref{993}]
The proof of this criterion is done by a local computation similar to the one in the proof of \cref{compute diff}, whose notation we adopt.
If locally at $x \in \supp \rp P^c.$, Case~\labelcref{dlt snc} holds, the claim is clear.
Therefore we focus on Case~\labelcref{dlt plt}.
Note that in this case $D_V$ is smooth, which implies $P_V = D_V$ and thus $P_V^c = 0$.

Write $\wt\sigma = t^k \, (\d t)^m$ with $k \in \Z$ (locally and up to units), and let $\ell$ be the Cartier index of $K_X + D$ at $x$.
The coefficient of $\rd mP^c.$ at $x$ is $\rd m (1 - \frac1\ell).$, therefore $\wt\sigma$ is contained in $\O P(mK_P + \rd mP^c.)$ if and only if
\begin{equation} \label{1005}
k \ge - \rd m (1 - \textstyle\frac1\ell)..
\end{equation}
On the other hand, $\gamma^* (t) = \eps u^\ell$ and hence
\[ \gamma^* (\wt\sigma) = \big( \eps^{k + m} \ell^m u^{k\ell + m(\ell - 1)} + \cdots \big) \cdot (\d u)^m, \]
where the dots stand for terms involving higher powers of $u$.
We see that $\gamma^* (\wt\sigma)$ is regular if and only if $k\ell + m(\ell - 1) \ge 0$.
This is equivalent to~\labelcref{1005}, proving the claim.
\end{proof}

Once the maps $\restr_P^m$ are constructed, their surjectivity follows from the right-exactness of $\gamma_*(-)^G$, as before.
Finally, to obtain sequence~\labelcref{restr seq} we set $\restr_P \defn \restr_P^1$.
On the snc locus $\snc{(X, D)}$, the kernel agrees with $\Omegarl X1D(-P) \ddual$ by the snc case mentioned in the beginning of the proof.
Since both sheaves are reflexive, they agree everywhere. \qed

\section{Lifting forms along a non-positive map} \label{sec lift elem}

The following theorem, while technical in nature, is at the heart of the paper.
The ``non-positivity'' in the title refers to property~\labelcref{lift elem.2} below.

\begin{thm}[Lifting forms] \label{lift elem}
Let $g \from Y \to X$ be a proper birational map of normal surfaces over a field $k$, with $E = \Exc(g)$ the reduced exceptional divisor.
Furthermore let $D$ be a reduced divisor on $X$, and set $D_Y \defn g\inv_* D + E$.
Assume the following:
\begin{enumerate}
\item\label{lift elem.1} The pair $(Y, D_Y)$ is tamely dlt, and
\item\label{lift elem.2} the anticanonical divisor $-(K_Y + D_Y)$ is $g$-nef.
\end{enumerate}
Then the natural map
\[ g_* \Omegarl{Y/k}1{D_Y} \xhookrightarrow{\quad} \Omegarl{X/k}1D \]
is an isomorphism.
\end{thm}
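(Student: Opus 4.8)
The plan is to run the "baby case" strategy outlined in the introduction, now with the dlt refinements of \cref{res} and \cref{restr} available. Since the statement is local on $X$, we may assume $X$ is affine and work with global sections; the assertion to prove is that every reflexive $1$-form $\sigma \in \HH0.X.\Omegarl X1D.$ pulls back to a section of $\Omegarl Y1{D_Y}$. Via the natural inclusion of rational differential forms, $g^* \sigma$ is \emph{a priori} only a section of $\Omegarl Y1{D_Y}(G)$ for some effective $g$-exceptional divisor $G$ with $\supp G \subseteq E$; choosing $G$ minimal with this property, our goal is to show $G = 0$. Suppose not. The datum of $g^*\sigma$ as a section of $\Omegarl Y1{D_Y}(G)$ is the same as a nonzero $\O Y$-linear map $\O Y(-G) \to \Omegarl Y1{D_Y}$, nonvanishing in codimension one along $G$.

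The core is the following descent step, which I would isolate as a lemma: if $-(K_Y + D_Y)$ is $g$-nef and $G \neq 0$ is $g$-exceptional and effective, then the Negativity Lemma (or a direct intersection-theoretic argument, since $G$ is supported on the $g$-exceptional locus) yields a component $P \subseteq E$ with $G \cdot P < 0$, hence $(G+P)\big|_P$ has negative degree on $P$ — here I use $P \cong \wb P$ regular by \cref{dlt structure}. Restrict the map $\O Y(-G) \to \Omegarl Y1{D_Y}$ to $P$, taking reflexive restrictions, and compose with the reflexive residue $\res_P^P$ of \cref{res}; because $\O Y(-G)\big|_P$ has positive degree while $\O P$ has degree zero, this composite vanishes. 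Therefore the map factors through $\ker(\res_P^P\circ\restr_P) $; combining the residue sequence \labelcref{res P seq} (kernel $\Omegal P1{\rd P^c.}$ on $P$) with the restriction sequence \labelcref{restr seq}, and using that $\O Y(-G)\big|_P$ still has positive degree against the degree-zero-or-negative quotient pieces, one forces the map $\O Y(-G) \to \Omegarl Y1{D_Y}$ to land in $\Omegarl Y1{D_Y}(-P)$. (This is the place where both sequences are genuinely needed, and where the "tamely dlt" hypothesis enters, via \cref{res}, \cref{restr}.) That means $g^*\sigma$ is in fact a section of $\Omegarl Y1{D_Y}(G - P)$, contradicting minimality of $G$ — unless $G = 0$.

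The main obstacle, I expect, is making the degree bookkeeping on $P$ watertight: one must check that after passing from $(Y,D_Y)$ to the pair $(P, P^c)$ via adjunction, the quotient sheaves appearing on the right of \labelcref{res P seq} and \labelcref{restr seq}, namely $\O P$ and $\Omegal P1{\rd P^c.}$ (equivalently $\O P(K_P + \rd P^c.)$), both have degree $\le 0$ when paired against the \emph{positive}-degree line bundle $\O Y(-G)\big|_P$, so that no nonzero map can hit them. For $\O P$ this is immediate; for $\Omegal P1{\rd P^c.}$ one uses $\deg\!\big(K_P + \rd P^c.\big) \le \deg\!\big(K_P + P^c\big) = \deg\!\big((K_Y+D_Y)\big|_P\big) \le 0$, which is exactly where the $g$-nefness hypothesis \labelcref{lift elem.2} is consumed (via adjunction $(K_Y+D_Y)\big|_P = K_P + P^c$ and $-(K_Y+D_Y)$ being $g$-nef, so of non-positive degree on the $g$-contracted curve $P$). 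One subtlety to handle carefully is that $\restr_P$ in \labelcref{restr seq} is defined on $\Omegarl Y1{D_Y-P}$, not on $\Omegarl Y1{D_Y}$, so the two-step factorization (first residue, then restriction) must be organized exactly as in the Snake-Lemma diagram of Figure~\labelcref{857}; I would phrase the descent directly in terms of that diagram rather than re-deriving it. Finally, a routine check that the minimal $G$ exists (Noetherianity) and that "regular in codimension one along $G$" is preserved under the factorization closes the induction.
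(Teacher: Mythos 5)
Your strategy is the paper's own: pull back $\sigma$, record its pole divisor $G$, find a component $P \subset \supp G$ with $G \cdot P < 0$, kill first the residue and then the restriction of the induced map $\O Y(-G) \to \Omegarl Y1{D_Y}$ by a degree computation on $P$ (with adjunction and the $g$-nefness hypothesis entering exactly where you put them), and descend from $G$ to $G - P$; organizing the two vanishings through \labelcref{res P seq} and the diagram of Figure~\labelcref{857} instead of through \labelcref{res seq} and \labelcref{restr seq} on $Y$ is an equivalent bookkeeping choice. However, the step you yourself flag as ``bookkeeping'' contains a genuine gap: you treat $\O Y(-G)\big|_P$ as a line bundle of positive degree $-G \cdot P$. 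The divisor $G$ is in general only a Weil divisor ($Y$ is \Q-factorial, not factorial), and the torsion-free restriction of $\O Y(-G)$ to the regular curve $P$ is a line bundle whose degree is only bounded \emph{above} by the rational number $-G \cdot P$: comparing with a Cartier multiple via the multiplication map $\O Y(-G)^{\tensor m} \to \O Y(-mG)$ shows that restriction of a non-Cartier divisorial sheaf loses degree at the singular points of $Y$ on $P$. Since $-G \cdot P$ is typically a fraction less than $1$, the restricted line bundle can very well have degree $0$, and then neither the vanishing of maps to $\O P$ nor to $\Omegal P1{\rd P^c.}$ follows: the targets only satisfy $\deg \le 0$ (in the application to \cref{lift f} the contraction is log crepant, so $\deg(K_P + P^c) = 0$ exactly), so strict positivity of the source degree is indispensable, and at level $m = 1$ you do not have it.

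This is precisely why Theorems~\labelcref{res} and~\labelcref{restr} are stated together with the symmetric-power maps $\res_P^m$ and $\restr_P^m$, and why the paper's proof argues not with the map $i$ itself but with $\Sym^{[m]} i$ for $m$ chosen so that $mG$ is Cartier (possible because a dlt surface is \Q-factorial). Then the relevant restriction is the honest line bundle $\O Y(-mG)\big|_P$ of degree $-mG \cdot P > 0$, the obstruction spaces $\mathrm H^0\big(P, \O P(mG\big|_P)\big)$ and $\mathrm H^0\big(P, \O P(mK_P + \rd mP^c. + mG\big|_P)\big)$ vanish by your degree estimate run at level $m$ (note $\rd mP^c. \le mP^c$, so the adjunction inequality goes through verbatim), and the vanishing of the $m$-th reflexive symmetric power of ${\res_P} \circ i$, resp.\ ${\restr_P} \circ j$, forces the vanishing of the map itself, as one may test at the generic point of $P$. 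Without this device---which is the one idea missing from your proposal---the descent step does not close; with it, your argument coincides with the paper's.
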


\subsection*{Step 0: Setup of notation and outline of proof strategy}

Let
\[ \sigma \in \HH0.X.\Omegarl X1D. \setminus \set0 \]
be a nonzero reflexive logarithmic $1$-form, and let $g^* \sigma$ be its pullback to $Y$, considered as a rational section of the sheaf $\Omegarl Y1{D_Y}$.
We want to show that $g^* \sigma$ is in fact a regular section of that sheaf.
To this end, first pick an effective $g$-exceptional divisor $G$ such that
\begin{equation} \label{rueckert}
g^* \sigma \in \HH0.Y.\Omegarl Y1{D_Y}(G) \ddual..
\end{equation}
For example, $G$ may be taken to be the pole divisor of the rational section $g^* \sigma$.
We will show that whenever $G$ is nonzero, there is a curve $P \subset \supp G$ such that~\labelcref{rueckert} continues to hold with $G$ replaced by $G - P$.
Iterating this argument finitely often, we arrive at $G = 0$, hence $g^* \sigma \in \HH0.Y.\Omegarl Y1{D_Y}.$ as desired.

\subsection*{Step 1: Residue sequence}

Assume that~\labelcref{rueckert} holds for some $G \ne 0$.
Then $G^2 < 0$ by the Negativity Lemma (applied on some resolution of $Y$) and consequently, $G \cdot P < 0$ for some exceptional curve $P \subset \supp G \subset E$.
Twisting by $\O Y(-G)$ and taking the reflexive hull, \labelcref{rueckert} induces a map $i \from \O Y(-G) \to \Omegarl Y1{D_Y}$.
As $g^* \sigma \ne 0$, this map is nonzero and hence injective.
On the tamely dlt pair $(Y, D_Y)$, we have the residue sequence~\labelcref{res seq}
\[ \xymatrix{
& & \O Y(-G) \ar@{ ir->}_-i[d] \ar@/^.5pc/[dr] \ar@/_.5pc/@{-->}_-j[dl] \\
0 \ar[r] & \Omegarl Y1{D_Y - P} \ar[r] & \Omegarl Y1{D_Y} \ar^-{\res_P}[r] & \O P \ar[r] & 0.
} \]

\begin{clm} \label{factor res}
The composition ${\res_P} \circ i$ is zero, and hence $i$ factors via a map $j$ as indicated by the dashed arrow in the above diagram.
\end{clm}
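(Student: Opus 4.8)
I want to show that $\res_P \circ i = 0$, where $i \from \O Y(-G) \to \Omegarl Y1{D_Y}$ is the map induced by $g^*\sigma$ and $P \subset \supp G$ is an exceptional curve with $G \cdot P < 0$. Composing with $\res_P$ gives a map
\[
\varphi \defn \res_P \circ i \from \O Y(-G) \lto \O P,
\]
which, after restricting to $P$ and forgetting torsion, is the same as a map
\[
\wb\varphi \from \O P(-G)\ddual \lto \O P.
\]
Since $G \cdot P < 0$, the line bundle $\O P(-G)\big|_P$ has positive degree on the proper curve $P$ (which is a regular curve by \cref{dlt structure}, as $P$ is a component of the dlt boundary $D_Y$). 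A nonzero map from a line bundle of positive degree to $\O P$ on a complete regular curve is impossible, since its vanishing locus would be a nonzero effective divisor of negative degree. Hence $\wb\varphi = 0$, and therefore $\varphi$ is zero as a map to the torsion-free sheaf $\O P$ — it vanishes generically along $P$ and so it vanishes. Consequently $i$ maps into $\ker(\res_P) = \Omegarl Y1{D_Y - P}$ by \cref{res}, which is exactly the factorization $j$ in the diagram.

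**Checking the hypotheses feed into this.** Two points need care. First, for $\res_P$ to exist on $Y$ in the first place I am using \cref{res}, whose hypothesis is that $(Y, D_Y)$ is tamely dlt; this is precisely assumption~\labelcref{lift elem.1}. Second, I need $P$ to actually be a component of $D_Y$ so that $\res_P$ makes sense and $P$ is regular: since $P \subset \supp G \subset E$ and $D_Y = g\inv_* D + E$ contains $E$, this holds. The existence of $P$ with $G \cdot P < 0$ is already established in Step~1 of the proof: by the Negativity Lemma (applied on a resolution of $Y$) one has $G^2 < 0$ for the nonzero effective $g$-exceptional divisor $G$, hence $G \cdot P < 0$ for at least one component $P$ of $\supp G$.

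**Main obstacle.** The essential content is really the degree argument: translating "$G \cdot P < 0$" into "$\O P(-G)\ddual$ has positive degree on $P$" and hence has no nonzero map to $\O P$. The one subtlety is that $\O Y(-G)\big|_P$ need not literally be the reflexive sheaf $\O P(-G)\ddual$ — these differ by torsion and by the fact that $P$ may not be Cartier in $Y$ — but since $P$ is a regular curve, passing to the double dual on $P$ simply kills the torsion, and the degree of the resulting line bundle is computed by the intersection number $-G \cdot P > 0$ (Mumford's pullback / intersection theory on the \Q-factorial surface $Y$). So the only thing to verify carefully is that the relevant restriction, after reflexivization on $P$, is a positive-degree line bundle; once that is in hand, the non-existence of a nonzero morphism to $\O P$ is immediate, and the claim follows.
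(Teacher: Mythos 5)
Your reduction to a statement on the curve $P$ is the right instinct, but the key step --- ``the degree of the resulting line bundle is computed by the intersection number $-G\cdot P>0$'' --- is a genuine gap, and it is exactly the point the paper's proof is designed to get around. The divisor $G$ is only \Q-Cartier on the \Q-factorial surface $Y$, so $-G\cdot P$ is in general a non-integral rational number (Mumford's pullback), whereas $\deg_P \big( \O Y(-G)\big|_P \ddual \big)$ is an integer; the two simply cannot agree when $G$ fails to be Cartier at a point of $P$. Worse, the inequality goes the wrong way for you: the natural map $\big( \O Y(-G)\big|_P \big)^{\tensor m} \to \O Y(-mG)\big|_P$ (for $mG$ Cartier) is generically an isomorphism, which gives $\deg_P \big( \O Y(-G)\big|_P \ddual \big) \le -G\cdot P$. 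So whenever $0 < -G\cdot P < 1$ (e.g.\ $-G\cdot P = \tfrac{m-1}{m}$, which occurs for exceptional curves through quotient-type singular points of $Y$), the reflexivized restriction may well have degree $\le 0$, and then nonzero maps to $\O P$ are not excluded by any degree count: for a line bundle $\sL$ on $P$ one has $\Hom{\O P}.\sL.\O P. = \mathrm{H}^0(P, \sL\inv)$, which is nonzero as soon as $\deg \sL \le 0$ (with $\sL$ trivial in the degree-zero case). Your argument therefore proves the claim only under the additional assumption that $G$ is Cartier along $P$, which is not available here.

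The paper's proof fixes precisely this: it chooses $m$ with $mG$ Cartier, takes the $m$-th reflexive symmetric power $\Sym^{[m]} i$, and composes with the map $\res_P^m$ from \cref{res} (this is the reason those symmetric residue maps are constructed at all). The restriction to $P$ of the resulting composition is an honest map $\O P(-mG) \to \O P$ between line bundles, i.e.\ an element of $\mathrm{H}^0(P, \O P(mG))$, and this group vanishes because $\deg_P \O P(mG) = mG\cdot P$ is now a genuinely negative integer; since that composition is the $m$-th symmetric power of ${\res_P}\circ i$, its vanishing forces ${\res_P}\circ i = 0$. If you want to keep your structure, you must replace the naive degree computation on $(\O Y(-G)|_P)\ddual$ by this passage to a Cartier multiple via $\Sym^{[m]}$ and $\res_P^m$; as written, the central inequality of your proof is unjustified and in general false.
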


\begin{proof}[Proof of \cref{factor res}]
Let $m \ge 1$ be sufficiently divisible so that $mG$ is Cartier (recall that $Y$ is \Q-factorial).
The $m$-th reflexive symmetric power of $i$, composed with the map $\res_P^m$ from \cref{res}, yields a map
\begin{equation} \label{728}
\O Y(-mG) \xrightarrow{\Sym^{[m]} i} \Sym^{[m]} \Omegarl Y1{D_Y} \xrightarrow{\res_P^m} \O P
\end{equation}
which is nothing but the $m$-th reflexive symmetric power of ${\res_P} \circ i$.
Hence in order to show that ${\res_P} \circ i$ vanishes, it is sufficient to prove the vanishing of~\labelcref{728}.
As the target of the latter map is supported on $P$, it is zero if and only if its restriction to $P$ is zero.
But that restriction is a map $\O P(-mG) \to \O P$, or in other words, an element of $\HH0.P.\O P(mG).$.
As $G \cdot P < 0$ and $mG$ is Cartier, the latter space is zero.
\end{proof}

\subsection*{Step 2: Restriction sequence}

We essentially repeat Step~1, but with the residue sequence replaced by the restriction sequence~\labelcref{restr seq}:
\[ \xymatrix{
& & \O Y(-G) \ar@{ ir->}_-j[d] \ar@/^.5pc/[dr] \ar@/_.5pc/@{-->}_-\iota[dl] \\
0 \ar[r] & \Omegarl Y1{D_Y}(-P) \ddual \ar[r] & \Omegarl Y1{D_Y - P} \ar^-{\restr_P}[r] & \Omegal P1{\rd P^c.} \ar[r] & 0.
} \]

\begin{clm} \label{factor restr}
The composition ${\restr_P} \circ j$ is zero, and hence $j$ factors via a map $\iota$ as indicated by the dashed arrow in the above diagram.
\end{clm}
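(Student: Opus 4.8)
The plan is to run the argument used for \cref{factor res} essentially verbatim, merely trading the residue sequence and the maps $\res_P^m$ for the restriction sequence~\labelcref{restr seq} and the maps $\restr_P^m$ furnished by \cref{restr} (which applies because $(Y, D_Y)$ is tamely dlt by~\labelcref{lift elem.1}).

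Concretely, I would first choose $m \ge 1$ sufficiently divisible that $mG$ is Cartier (recall $Y$ is \Q-factorial). Taking the reflexive $m$-th symmetric power of $j$ and composing with $\restr_P^m$ then yields
\[ \O Y(-mG) \xrightarrow{\;\Sym^{[m]} j\;} \Sym^{[m]} \Omegarl Y1{D_Y - P} \xrightarrow{\;\restr_P^m\;} \O P\big( mK_P + {\rd mP^c.} \big), \]
and I claim this composite agrees, off a proper closed subset of $P$, with the $m$-th symmetric power of $\restr_P \circ j$. Indeed, over $\snc{(Y, D_Y)}$ the different $P^c$ is a reduced divisor by \cref{compute diff}, so there $\rd mP^c. = m\rd P^c.$, which identifies the target $\O P(mK_P + \rd mP^c.)$ of $\restr_P^m$ generically with the $m$-th tensor power of the target $\Omegal P1{\rd P^c.}$ of $\restr_P$, and under this identification the coincidence of the two composites is exactly the defining property of $\restr_P^m$. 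It therefore suffices to prove that the displayed composite vanishes, and since its target is supported on $P$, this reduces to showing that the induced homomorphism $\O P(-mG) \to \O P\big( mK_P + {\rd mP^c.} \big)$ is zero, i.e.\ that $\HH0.P.\O P\big( mK_P + {\rd mP^c.} + mG \big). = 0$.

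To obtain this last vanishing I would pass to degrees on the curve $P$, which is regular --- hence smooth and complete --- by \cref{dlt structure}. Using $\rd mP^c. \le mP^c$ together with the adjunction identity $(K_Y + D_Y)\big|_P = K_P + P^c$ recorded in \cref{res}, one finds
\[ \deg_P\big( mK_P + {\rd mP^c.} + mG \big) \;\le\; m\,\big((K_Y + D_Y)\cdot P\big) \;+\; m\,(G\cdot P). \]
Now $P \subset E$ is contracted by $g$, so hypothesis~\labelcref{lift elem.2} forces $(K_Y + D_Y)\cdot P \le 0$, while $G\cdot P < 0$ and $mG$ Cartier make $m\,(G\cdot P)$ a negative integer; thus the right-hand side is strictly negative. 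A line bundle of negative degree on the smooth complete curve $P$ has no nonzero global section, which gives the vanishing. Hence $\restr_P\circ j = 0$, and $j$ lifts to the asserted map $\iota$.

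The step I expect to demand real care is the generic comparison of $\restr_P^m\circ\Sym^{[m]} j$ with $\Sym^m(\restr_P\circ j)$: one must check they agree off a proper closed subset of $P$ in a way that lets the vanishing of the composite force $\restr_P\circ j$ itself --- and not merely its $m$-th power --- to vanish. The obstruction is that $m\rd P^c. = \rd mP^c.$ can fail at the plt points of $(Y, D_Y)$, so the two line bundles on $P$ serving as targets genuinely differ there; resolving this amounts to verifying that they agree on a dense open set and that the relevant maps are compatible on it. Everything else --- the reduction from the possibly non-Cartier $G$ to the Cartier $mG$ via symmetric powers, and the degree count --- is routine and follows the proof of \cref{factor res} line for line.
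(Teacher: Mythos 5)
Your proof is correct and takes essentially the same route as the paper: pass to the reflexive symmetric power $\Sym^{[m]}$ with $mG$ Cartier, compose with $\restr_P^m$ from \cref{restr}, and kill the resulting section of $\O P\big( mK_P + \rd mP^c. + mG \big)$ by the same degree estimate (round-down, adjunction, hypothesis~\labelcref{lift elem.2}, and $G \cdot P < 0$). The generic-comparison step you single out is exactly what the phrase ``generically coincides with the $m$-th symmetric power'' in \cref{restr} is there for, and your treatment of it is fine; the only cosmetic point is that for the negative-degree vanishing you need $P$ only to be an integral proper regular curve (over a possibly imperfect field ``regular'' need not mean ``smooth''), which suffices.
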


\begin{proof}[Proof of \cref{factor restr}]
Let $m$ be as in the proof of \cref{factor res}, so that $mG$ is Cartier.
The $m$-th reflexive symmetric power of $j$, composed with the map $\restr_P^m$ from \cref{restr}, is the $m$-th reflexive symmetric power of ${\restr_P} \circ j$:
\begin{equation} \label{751}
\O Y(-mG) \xrightarrow{\Sym^{[m]} j} \Sym^{[m]} \Omegarl Y1{D_Y - P} \xrightarrow{\restr_P^m} \O P(mK_P + \rd mP^c.)
\end{equation}
As in \cref{factor res}, it suffices to show that the restriction of~\labelcref{751} to $P$ vanishes.
This is a map $\O P(-mG) \to \O P(mK_P + \rd mP^c.)$, or in other words, an element of $\HH0.P.\O P(mK_P + {\rd mP^c.} + mG).$.
As
\begin{align*}
\deg \big( mK_P + \rd mP^c. + mG\big|_P \big) & \le \deg \big( m(K_P + P^c) + mG\big|_P \big) && \text{round-down} \\
& \le \deg \big( m(K_Y + D_Y + G)\big|_P \big) && \text{by adjunction} \\
& \le \deg \big( mG\big|_P \big) && \text{by~\labelcref{lift elem.2}} \\*
& = mG \cdot P && \text{$mG$ is Cartier} \\*
& < 0,
\end{align*}
the latter space is zero.
This ends the argument.
\end{proof}

The proof of \cref{lift elem} is now easily finished: the existence of the map $\iota$ is equivalent to giving a global section of the sheaf $\Omegarl Y1{D_Y}(G - P) \ddual$, which of course is exactly the form $g^* \sigma$ we started with.
This shows that~\labelcref{rueckert} holds with $G - P$ in place of $G$, as desired. \qed

\section{Proof of \cref{main lc}} \label{sec main lc}

The aim of this section is to prove our first main result: any log canonical surface pair $(X, D)$ over a perfect field of characteristic $\ge 7$ satisfies the \lext.
The following notion will play a key role.

\begin{dfn}[Tame resolutions] \label{tame res}
Let $(X, D)$ be a reduced log canonical surface pair over a field $k$.
A \emph{tame resolution} of $(X, D)$ is a log resolution $\pi \from Y \to X$ together with a factorization of $\pi$ as in \cref{fact res} such that
\begin{enumerate}
\item\label{tame.1} for any $0 \le i \le r - 1$, the pair $(Y_i, \wt D_i)$ is tamely dlt, and
\item\label{tame.2} if $f$ is not an isomorphism (this can happen only if $(X, D)$ is not plt), then also $(Z, \wt D_r)$ is required to be tamely dlt.
\end{enumerate}
\end{dfn}

\subsection{Auxiliary results}

First we show that when dealing with log canonical surface pairs, there is no loss of generality in assuming them to be reduced.
We also prove that having a tame resolution implies the \lext and that the \lext is invariant under separable base change.
The latter property is used for reducing to the case of an algebraically closed ground field, where the classification of surface singularities becomes simpler.

\begin{prp}[Rounding down] \label{rounddown}
Let $(X, D)$ be a log canonical surface pair.
Then also $(X, \rd D.)$ is log canonical.
\end{prp}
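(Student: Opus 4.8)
The plan is to reduce to a purely local problem and then apply the classification-free characterization of log canonicity in terms of discrepancies. Let $\pi \from Y \to X$ be a log resolution of both $(X, D)$ and $(X, \rd D.)$ simultaneously -- such a common log resolution exists since $\supp \rd D. \subseteq \supp D$, so any log resolution of $(X, D)$ works. Write $E_1, \dots, E_n$ for the $\pi$-exceptional prime divisors, and let $D' = \rd D.$, with strict transforms $\pi\inv_* D$ and $\pi\inv_* D'$ on $Y$. The two ramification formulas read
\[
K_Y + \pi\inv_* D = \pi^*(K_X + D) + \sum_i a_i E_i, \qquad
K_Y + \pi\inv_* D' = \pi^*(K_X + D') + \sum_i a_i' E_i,
\]
where $\pi^*$ is Mumford's pullback (legitimate on a surface, as used already in \cref{fact res}). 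By hypothesis $(X, D)$ is lc, i.e.\ $a_i \ge -1$ for all $i$; I must show $a_i' \ge -1$ for all $i$.

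The key step is to compare the two discrepancy vectors. Subtracting the formulas, and writing $D - D' = \{D\} = \sum \{a_j\} D_j \ge 0$ for the (effective) fractional part, one gets
\[
\pi\inv_* \{D\} \;=\; \pi^*\{D\} + \sum_i (a_i - a_i') E_i.
\]
Now the left-hand side is effective, and $\{D\}$ is effective on $X$, so its Mumford pullback $\pi^*\{D\}$ is effective as well (pullback of an effective divisor under a birational morphism of surfaces is effective). Hence $\sum_i (a_i - a_i') E_i = \pi\inv_*\{D\} - \pi^*\{D\}$ is $\pi$-exceptional and anti-effective relative to $\pi$ in the sense that its pushforward to $X$ is $\le 0$; more precisely, the negativity lemma \cite[Lemma~3.40]{KM98} applied to $-\big(\pi\inv_*\{D\} - \pi^*\{D\}\big)$ — whose pushforward is $-\{D\} \le 0$ and which is nef over $X$ because $\pi^*\{D\}$ is by definition numerically trivial on fibers — forces each coefficient $a_i - a_i' \le 0$, i.e.\ $a_i' \ge a_i \ge -1$. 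Thus all discrepancies of $(X, D')$ are $\ge -1$, and since the strict transform coefficients in $D' = \rd D.$ are all $0$ or $1$ (hence $\le 1$), the pair $(X, \rd D.)$ is log canonical. (One also needs $\rd D.$ to be a legitimate boundary, i.e.\ coefficients in $[0,1]$, which is immediate.)

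The main obstacle I anticipate is making the comparison of Mumford pullbacks fully rigorous: Mumford's $\pi^*$ is defined by orthogonality against the exceptional curves, not as an honest pullback of line bundles, so I should be careful that $\pi^*$ is additive, preserves effectivity of honest (Cartier or $\Q$-Cartier) divisors, and that the negativity lemma is being applied to a genuinely $\pi$-nef $\Q$-divisor. A cleaner alternative that sidesteps the effectivity subtleties is to argue one valuation at a time: fix a prime divisor $F$ over $X$ and use that the discrepancy $a(F; X, D')$ is a non-increasing function of the boundary, $a(F; X, D') \ge a(F; X, D)$, because $D' \le D$ and discrepancies are monotone in the boundary (this monotonicity is standard and follows directly from the defining ramification formula, cf.\ \cite[Def.~2.8]{Kollar13}); combined with $a(F; X, D) \ge -1$ for all $F$ this gives the claim with no pullback computations at all. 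I will present the valuation-wise version as the primary argument and mention the divisorial reformulation only as motivation.
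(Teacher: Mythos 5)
There is a genuine gap: your argument only establishes \emph{numerical} log canonicity of $(X, \rd D.)$, whereas the actual content of \cref{rounddown} is the $\Q$-Cartier property of $K_X + \rd D.$. The discrepancy comparison you carry out (whether via Mumford pullback or via monotonicity of discrepancies in the boundary) is exactly the step the paper dismisses in one sentence (``If $(X,D)$ is numerically log canonical, then so is $(X,\rd D.)$''), and it is unproblematic; but ``log canonical'' in the sense of \cite[Def.~2.8]{Kollar13} requires $K_X + \rd D.$ to be $\Q$-Cartier, and this does not follow from the hypothesis, since only $K_X + D$ is assumed $\Q$-Cartier and the two differ along $\supp\{D\}$. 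Your ``cleaner'' valuation-wise alternative is in fact circular on this point: the discrepancies $a(F; X, \rd D.)$ in the usual sense are not even defined until one knows $K_X + \rd D.$ is $\Q$-Cartier, and if you instead interpret them via Mumford's pullback you are back to proving only the numerical statement.

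The paper's proof spends all of its effort on precisely this missing step. It localizes at a point $x \in \supp\{D\}$, observes that there $(X, \rd D.)$ is numerically dlt and hence $(X, \emptyset)$ is numerically klt, and then applies \cref{fact res} (which, via Tanaka's MMP for surfaces, works for numerically lc pairs) to $(X,\emptyset)$: since there are no exceptional divisors of discrepancy $-1$, the map $f \from Z \to X$ is an isomorphism and $X$ inherits $\Q$-factoriality from $Z$, so $K_X + \rd D.$ is $\Q$-Cartier at $x$ (away from $\supp\{D\}$ it agrees with $K_X + D$). Alternatively, the paper notes one can adapt the characteristic-zero proof of \cite[Prop.~4.11]{KM98}, replacing the basepoint-free theorem by \cite[Thm.~4.2]{Tanaka18}. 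To repair your proposal you would need to add an argument of this kind; as written, it proves a weaker statement.
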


\begin{prp}[Tameness is sufficient] \label{tame lext}
Let $(X, D)$ be a reduced log canonical surface pair admitting a tame resolution.
Then $(X, D)$ satisfies the \lext for $1$-forms.
\end{prp}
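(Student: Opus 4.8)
The plan is to reduce the \lext for $1$-forms, which is a statement about an arbitrary log resolution, to the specific tame resolution $\pi \from Y \to X$ we are given, and then to extend $1$-forms stepwise along the factorization of $\pi$ produced by \cref{fact res}. First I would observe that in order to check the \lext for $1$-forms it suffices to verify, for one fixed log resolution, that the pushforward sheaf $\pi_* \Omegarl Y1{\rd D_Y. + E}$ is reflexive (this is built into the definition of the \lext, since any two log resolutions are dominated by a common one, and reflexivity of the pushforward can be checked on any single resolution). Since $(X, D)$ is assumed reduced, $\rd D. = D$ and $D_Y = \pi\inv_* D + E$ with $E$ reduced, so we must show that the inclusion $\pi_* \Omegarl Y1{D_Y} \hookrightarrow \Omegarl X1D$ is an isomorphism.

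Next I would feed the factorization $Y = Y_0 \xrightarrow{\phi_0} Y_1 \xrightarrow{\phi_1} \cdots \xrightarrow{\phi_{r-1}} Y_r = Z \xrightarrow{f} X$ from \cref{fact res} into the machinery of \cref{lift elem}. The key point is that each elementary step is of the type to which \cref{lift elem} applies. Indeed, for $0 \le i \le r-1$ the map $\phi_i \from Y_i \to Y_{i+1}$ is a proper birational morphism of normal surfaces whose reduced exceptional divisor is a single curve by \labelcref{fact res.2}; since this curve is $\phi_i$-contracted it has negative self-intersection, and as it is a $K_{Y_i} + \wt D_i$-negative extremal contraction produced by the MMP, the anticanonical divisor $-(K_{Y_i} + \wt D_i)$ is $\phi_i$-nef; and by hypothesis \labelcref{tame.1} the pair $(Y_i, \wt D_i)$ is tamely dlt. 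Here I should double-check the compatibility of notation: writing $X' = Y_{i+1}$, $Y' = Y_i$, $D' = (\phi_i)_* E$-part of $\wt D_{i+1}$ restricted suitably --- more precisely one takes $D' \defn \wt D_{i+1}$ on $Y_{i+1}$ and notes that $\wt D_i = (\phi_i)\inv_* D' + \Exc(\phi_i)$, which is exactly the setup of \cref{lift elem}. Hence \cref{lift elem} gives that $(\phi_i)_* \Omegarl{Y_i}1{\wt D_i} \to \Omegarl{Y_{i+1}}1{\wt D_{i+1}}$ is an isomorphism. Composing these isomorphisms for $i = 0, \dots, r-1$ yields $\pi' _* \Omegarl Y1{\wt D_0} \cong \Omegarl Z1{\wt D_r}$, where $\pi' = \phi_{r-1} \circ \cdots \circ \phi_0 \from Y \to Z$.

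It then remains to handle the last map $f \from Z \to X$. If $f$ is an isomorphism we are already done. Otherwise, by \labelcref{tame.2} the pair $(Z, \wt D_r)$ is tamely dlt, and by \labelcref{fact res.3} the map $f$ is numerically log crepant, i.e.\ $K_Z + \wt D_r = f^*(K_X + D)$, so in particular $-(K_Z + \wt D_r)$ is numerically $f$-trivial, hence $f$-nef; also $\Exc(f)$ need not be irreducible, but \cref{lift elem} makes no irreducibility assumption on $E$, only on the intermediate steps, which is automatic from the induction hidden inside its proof (the divisor $G$ is shrunk one component at a time). Thus \cref{lift elem} applies once more and gives $f_* \Omegarl Z1{\wt D_r} \cong \Omegarl X1D$. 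Composing, $\pi_* \Omegarl Y1{D_Y} = f_* \pi'_* \Omegarl Y1{\wt D_0} \cong \Omegarl X1D$, which is exactly the \lext for $1$-forms. The main obstacle I anticipate is purely bookkeeping: verifying that at each stage the hypotheses of \cref{lift elem} are literally met --- in particular that $\wt D_i$ is the correct strict transform plus full exceptional divisor relative to $\phi_i$, that the tamely dlt hypothesis transfers correctly, and that the $g$-nef condition on the anticanonical divisor holds because the contraction is $(K+D)$-negative extremal (for the $\phi_i$) respectively crepant (for $f$). There is no deep new idea needed here beyond correctly invoking \cref{fact res} and \cref{lift elem}.
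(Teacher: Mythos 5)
Your overall strategy is exactly the paper's: extend $1$-forms step by step along the factorization from \cref{fact res}, applying \cref{lift elem} to each $\phi_i$ and to $f$, treating separately the plt case where $f$ is an isomorphism; your treatment of $f$ (log crepancy by \labelcref{fact res.3}, hence numerical $f$-triviality of $K_Z + \wt D_r$, hence $f$-nefness of the anticanonical) is also the paper's argument. However, there is a genuine gap in your verification of Condition~\labelcref{lift elem.2} for the intermediate contractions $\phi_i$. You assert that $-(K_{Y_i} + \wt D_i)$ is $\phi_i$-nef ``because $\phi_i$ is a $(K_{Y_i}+\wt D_i)$-negative extremal contraction produced by the MMP''. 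But the MMP in the proof of \cref{fact res} is not run on the pair $(Y_i, \wt D_i)$: it is run on the perturbed pair with boundary $\pi\inv_* D + E^{-1} + (1+\eps)E^{>-1} + \eps E^0$, in which exceptional curves of discrepancy $>-1$ carry coefficient strictly less than one (coefficient zero for those of positive discrepancy), whereas $\wt D_i$ carries every exceptional curve with coefficient one. The statement of \cref{fact res} records only the properties \labelcref{fact res.1}--\labelcref{fact res.3} and says nothing about $(K_{Y_i}+\wt D_i)$-negativity of $\phi_i$; and negativity with respect to the perturbed boundary does not formally transfer to $\wt D_i$, since $\wt D_i$ differs from the MMP boundary by an effective divisor that may contain the contracted curve $P$ itself, so the correction term has no definite sign. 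In fact strict negativity can fail: if $P$ has discrepancy $-1$ with respect to $(Y_{i+1}, \wt D_{i+1})$, then $(K_{Y_i}+\wt D_i)\cdot P = 0$.

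The inequality you actually need, $(K_{Y_i}+\wt D_i)\cdot P \le 0$, is nevertheless true, and the paper supplies the missing two lines: since $\wt D_i = (\phi_i\inv)_* \wt D_{i+1} + P$, one has $K_{Y_i} + \wt D_i = \phi_i^*\big(K_{Y_{i+1}} + \wt D_{i+1}\big) + \lambda P$ with $\lambda = a\big(P, Y_{i+1}, \wt D_{i+1}\big) + 1 \ge 0$, because $(Y_{i+1}, \wt D_{i+1})$ is dlt and hence log canonical; intersecting with $P$ and using $P^2 < 0$ (Negativity Lemma) gives $(K_{Y_i}+\wt D_i)\cdot P = \lambda P^2 \le 0$. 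With this substitution for your ``extremal negativity'' claim, your argument closes and coincides with the paper's proof.
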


\begin{prp}[Base change] \label{base change}
Let $(X, D)$ be a pair defined over a field $k$, and consider a separable field extension $k'/k$.
Set $X' \defn X \x_k k'$ and $D' \defn D \x_k k'$.
\begin{enumerate}
\item\label{base change.1} If $(X', D')$ satisfies the \et for $q$-forms, for some value of $q$, then so does $(X, D)$.
\item\label{base change.2} If $(X, D)$ admits a log resolution, the converse of~\labelcref{base change.1} also holds.
\end{enumerate}
Ditto for the \lext.
\end{prp}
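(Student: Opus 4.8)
The plan is to exploit that base change along a separable extension $k'/k$ is faithfully flat and respects all the geometry in play, so that every sheaf and morphism occurring in the definition of the extension properties commutes with $-\x_k k'$; faithfully flat descent along $X'\to X$ then does the rest. First I would record the relevant base change facts. (a)~If $W$ is a normal (resp.\ regular, resp.\ smooth) $k$-variety then so is $W\x_k k'$, a reduced divisor base changes to a reduced divisor, and if $(W,B)$ is snc then so is $(W\x_k k',B\x_k k')$. (b)~If $g\from Y\to X$ is proper birational with $Y,X$ normal then $g'\from Y'\to X'$ is again proper birational, and the formation of strict transforms and of the reduced exceptional divisor commutes with $-\x_k k'$; in particular, in the notation of the definition, $D_{Y'}=D_Y\x_k k'$ and $E'=E\x_k k'$. (c)~Reflexive hulls of coherent sheaves, and pushforward along a quasi-compact open immersion, commute with flat base change. (d)~K\"ahler differentials, their exterior powers, and logarithmic differentials along a reduced divisor commute with arbitrary base change. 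None of these is difficult, though (a) genuinely uses separability.

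The technical core is the claim that reflexive and reflexive logarithmic differentials commute with $-\x_k k'$: for every pair $(X,D)$ with $X$ normal one has $\Omegar{X/k}q\x_k k'\isom\Omegar{X'/k'}q$ and $\Omegarl{X/k}q{\rd D.}\x_k k'\isom\Omegarl{X'/k'}q{\rd D'.}$. The one subtlety is that the snc loci need not match: by (a), $U'\defn\snc{(X,D)}\x_k k'$ is only a \emph{big} open of $X'$ contained in $\snc{(X',D')}$. Writing $j\from U\defn\snc{(X,D)}\inj X$ and $j'\from U'\inj X'$, the sheaf $\Omegarl{X/k}q{\rd D.}\big|_U$ is the honest sheaf of logarithmic $q$-forms along $\rd D.$ on $U$, which by (d) base changes to the honest sheaf of logarithmic $q$-forms along $\rd D'.$ on $U'$, and this in turn is $\Omegarl{X'/k'}q{\rd D'.}\big|_{U'}$. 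Since $\Omegarl{X/k}q{\rd D.}=j_*\big(\Omegarl{X/k}q{\rd D.}\big|_U\big)$ by reflexivity, fact (c) gives $\Omegarl{X/k}q{\rd D.}\x_k k'=j'_*\big(\Omegarl{X'/k'}q{\rd D'.}\big|_{U'}\big)$, which equals $\Omegarl{X'/k'}q{\rd D'.}$ again by reflexivity and the fact that $U'$ is big. The non-logarithmic statement is proved in exactly the same way.

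Granting this, the two assertions follow easily. For~\labelcref{base change.1}: given any proper birational $\pi\from Y\to X$ from a normal variety, base change to $\pi'\from Y'\to X'$, which is of the same type by (b); the assumed extension property of $(X',D')$ makes $\pi'_*\Omegar{Y'/k'}q\to\Omegar{X'/k'}q$ an isomorphism, and by flat base change for $\pi'_*$ together with the core claim (applied on $Y$ and on $X$) this is the base change along the faithfully flat $X'\to X$ of the inclusion $\pi_*\Omegar{Y/k}q\inj\Omegar{X/k}q$, which was therefore already an isomorphism. For~\labelcref{base change.2}: fix a log resolution $\pi\from Y\to X$ of $(X,D)$ (which exists by hypothesis); by (a)--(b) its base change $\pi'\from Y'\to X'$ is a log resolution of $(X',D')$. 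Since the extension property need only be verified on a single (log) resolution, the assumed validity for $(X,D)$ gives an isomorphism $\pi_*\Omegarl{Y/k}q{\rd D_Y. + E}\bij\Omegarl{X/k}q{\rd D.}$; base changing it (again by flat base change for $\pi_*$ and by the core claim, now on $Y$ with the snc divisor $D_Y+E$, where no snc subtlety arises) yields the analogous isomorphism for $\pi'$, so $(X',D')$ satisfies the \lext\ for $q$-forms. The logarithmic statements are obtained by the identical argument, \emph{mutatis mutandis}.

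The step I expect to be the main obstacle is the core claim of the second paragraph: because $\snc{(X,D)}\x_k k'$ can be strictly smaller than $\snc{(X',D')}$ --- this already happens for $X=\A2$, $D=\{x^2+y^2=0\}$ over $\R\subset\C$, where the ``conjugate lines'' become genuinely normal crossing after base change --- one has to be careful that passing to the larger big open does not change the reflexive extension, which is exactly what reflexivity of $\Omegarl{X'/k'}q{\rd D'.}$ provides once the two candidate sheaves are known to agree on the smaller one. All other steps are bookkeeping with flat base change.
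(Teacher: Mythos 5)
Your proposal is correct and follows essentially the same route as the paper: base change everything along the faithfully flat separable extension, show that reflexive (log) differentials and the relevant pushforwards commute with this base change, and conclude by faithfully flat descent, checking in part~(2) that the base-changed log resolution is again a log resolution. The only cosmetic difference is that the paper establishes the commutation via a module-theoretic lemma (duals commute with flat base change, and reflexivity ascends/descends along faithfully flat extensions) and then descends reflexivity of $\pi_*\Omega^{[q]}_{Y/k}$, whereas you use the pushforward-from-the-snc-locus description and descend the isomorphism directly; your careful treatment of the possible enlargement of the snc locus after base change is precisely the point the paper leaves to the reader in the omitted logarithmic case.
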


\begin{proof}[Proof of \cref{rounddown}]
If $(X, D)$ is numerically log canonical, then so is $(X, \rd D.)$.
Thus it suffices to show that $K_X + \rd D.$ is \Q-Cartier.
The question is local, so we may concentrate attention on a point $x \in \supp {\{ D \}}$, the fractional part of $D$.
At such a point, the pair $(X, \rd D.)$ is even numerically dlt and then $(X, \emptyset)$ is numerically klt.
Applying \cref{fact res} to the latter pair, we get that $f \from Z \to X$ is an isomorphism, as there are no exceptional divisors of discrepancy $-1$, and hence $X$ is even \Q-factorial because $Z$ is.

An alternative (yet closely related) argument goes by noting that the characteristic zero proof of~\cite[Prop.~4.11]{KM98} still works if we replace the use of the basepoint-free theorem~\cite[Thm.~3.3]{KM98} by~\cite[Thm.~4.2]{Tanaka18}.
\end{proof}

\begin{proof}[Proof of \cref{tame lext}]
Let $\pi \from Y \to X$ be a tame resolution of $(X, D)$, where we keep notation from \cref{fact res}.
It suffices to extend $1$-forms along each step of the given factorization separately.
That is, we will prove the following two statements:

\begin{clm} \label{lift f}
The sheaf $f_* \Omegarl Z1{\wt D_r}$ is reflexive.
\end{clm}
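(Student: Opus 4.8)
The plan is to reduce this to \cref{lift elem}, since $f \from Z \to X$ is precisely the kind of ``non-positive'' map that theorem is designed to handle. Recall that by \cref{fact res}~\labelcref{fact res.3}, $f$ is numerically log crepant, meaning $K_Z + \wt D_r = f^*(K_X + D)$; in particular $-(K_Z + \wt D_r) = -f^*(K_X + D)$ is numerically $f$-trivial, hence certainly $f$-nef. This gives hypothesis~\labelcref{lift elem.2} of \cref{lift elem} for free. So the only thing to check is hypothesis~\labelcref{lift elem.1}, namely that the pair $(Z, \wt D_r)$ is \emph{tamely} dlt.

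First I would dispose of the case where $f$ is an isomorphism. In that situation $(X, D) = (Z, \wt D_r)$ is itself dlt and $K_X + D$ is \Q-Cartier; but we need more, namely that it is \zp-Cartier. This is exactly where the hypothesis that $(X,D)$ is log canonical (and not merely numerically log canonical) together with \cref{dlt structure} comes in: by the dichotomy, at each point of $\supp D$ the pair is either snc (so $K_X + D$ is Cartier locally) or plt with $D$ regular. In the plt case one invokes that a plt surface singularity has a well-understood local index; the content of \cref{tame res}~\labelcref{tame.2} is that when $f$ is an isomorphism the pair $(Z,\wt D_r) = (X,D)$ is \emph{assumed} to be tamely dlt as part of the definition of a tame resolution, so there is genuinely nothing to prove here — the hypothesis has been built in.

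When $f$ is not an isomorphism, $(X,D)$ is not plt, and then \cref{tame res}~\labelcref{tame.2} again directly furnishes that $(Z, \wt D_r)$ is tamely dlt. In either case hypothesis~\labelcref{lift elem.1} of \cref{lift elem} holds by the very definition of a tame resolution, so \cref{lift elem} applies to $g = f$ and yields that $f_* \Omegarl Z1{\wt D_r} \inj \Omegarl X1D$ is an isomorphism; a fortiori $f_*\Omegarl Z1{\wt D_r}$ is reflexive, which is the assertion of the claim.

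I do not expect any real obstacle here: the claim is essentially a bookkeeping step that matches the hypotheses of \cref{tame res} to those of \cref{lift elem}, with the log crepancy of $f$ supplying the nefness condition automatically. The only subtlety worth a sentence is making sure the reader sees that ``tamely dlt for $(Z,\wt D_r)$'' is not something one proves from scratch but rather part of the standing hypothesis encoded in \cref{tame res}~\labelcref{tame.2} — and that in the excluded case $f$ is an isomorphism, so the statement is vacuous once one observes $f_*\Omegarl Z1{\wt D_r} = \Omegarl X1D$ is reflexive by definition.
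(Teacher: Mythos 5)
Your argument is correct and is essentially the paper's proof: when $f$ is not an isomorphism you apply \cref{lift elem}, with the tameness hypothesis supplied by \cref{tame res}~\labelcref{tame.2} and the $f$-nefness (indeed numerical triviality) of $-(K_Z + \wt D_r)$ coming from the log crepancy property~\labelcref{fact res.3}, while the isomorphism case is trivial. The only slip is your passing remark that \labelcref{tame.2} assumes tameness of $(Z, \wt D_r)$ when $f$ is an isomorphism --- it does not, it requires it precisely when $f$ is \emph{not} an isomorphism --- but this is harmless since you rightly dispose of that case by observing that $f_* \Omegarl Z1{\wt D_r} = \Omegarl X1D$ is reflexive by definition.
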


\begin{clm} \label{lift phi}
For any $0 \le i \le r - 1$, the sheaf $(\phi_i)_* \, \Omegarl{Y_i}1{\wt D_i}$ is reflexive.
\end{clm}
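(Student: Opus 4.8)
The plan is to prove Claims \ref{lift f} and \ref{lift phi} by reducing both to an application of \cref{lift elem}. The key observation is that both $f$ and each $\phi_i$ are proper birational maps of normal (in fact $\Q$-factorial) surfaces, so the hypotheses of \cref{lift elem} need only be checked, namely (i) that the relevant target pair is tamely dlt, and (ii) that the relevant anticanonical divisor is nef over the base. For \cref{lift phi}, I would apply \cref{lift elem} to the map $g = \phi_i \from Y_i \to Y_{i+1}$, with the boundary $D$ on the base being $\wt D_{i+1}$ and hence $D_Y = (\phi_i)^{-1}_* \wt D_{i+1} + \Exc(\phi_i) = \wt D_i$ (here I use that $\wt D_i$ was defined precisely as $\pi^{-1}_* D + E$ pushed forward step by step, together with \cref{fact res}.\labelcref{fact res.2} which says $\Exc(\phi_i)$ is irreducible and is therefore a component of $\wt D_i$). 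Condition \labelcref{lift elem.1} holds because $(Y_i, \wt D_i)$ is tamely dlt by \cref{tame res}.\labelcref{tame.1}. Condition \labelcref{lift elem.2}, that $-(K_{Y_i} + \wt D_i)$ is $\phi_i$-nef, is where the real content sits: I expect this to follow from the MMP construction, since the $\phi_i$ are the steps of an MMP run in \cref{fact res} on a pair that differs from $(Y_i, \wt D_i)$ only by an $\eps$-perturbation supported on curves with discrepancy $> -1$, so each $\phi_i$ is a $(K_{Y_i} + \wt D_i)$-flip or divisorial contraction in the appropriate sense; the argument of \cref{fact res} (in particular \labelcref{504} and the Negativity Lemma) shows the pushed-forward boundary is $f$-nef, and an analogous local statement gives $\phi_i$-nefness of the anticanonical at each stage. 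Once \cref{lift elem} applies, $(\phi_i)_* \Omegarl{Y_i}1{\wt D_i} \isom \Omegarl{Y_{i+1}}1{\wt D_{i+1}}$ is reflexive, which is exactly \cref{lift phi}.

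For \cref{lift f}, I would argue similarly with $g = f \from Z \to X$, boundary $D$ on $X$, so that $D_Y = f^{-1}_* D + \Exc(f) = \wt D_r$ by \cref{fact res}.\labelcref{fact res.3} (the exceptional divisors of $f$ are exactly those of discrepancy $-1$, which appear with coefficient one in $\wt D_r$). Here condition \labelcref{lift elem.2} is automatic and sharp: by \cref{fact res}.\labelcref{fact res.3}, $f$ is log crepant, so $K_Z + \wt D_r = f^*(K_X + D)$ and hence $-(K_Z + \wt D_r)$ is even $f$-trivial, a fortiori $f$-nef. Condition \labelcref{lift elem.1} is the only thing to check: if $f$ is not an isomorphism we get tameness of $(Z, \wt D_r)$ directly from \cref{tame res}.\labelcref{tame.2}, and if $f$ is an isomorphism there is nothing to prove. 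So \cref{lift elem} gives $f_* \Omegarl Z1{\wt D_r} \isom \Omegarl X1D$, which is \cref{lift f}.

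Finally I would assemble the pieces. Composing $\pi = f \circ \phi_{r-1} \circ \cdots \circ \phi_0$ and using that reflexivity of a pushforward is stable under composing with further pushforwards of reflexive sheaves that are themselves reflexive, we get that $\pi_* \Omegarl Y1{\wt D_0} = \pi_* \Omegarl Y1{\pi^{-1}_* D + E}$ is reflexive, i.e.\ equals $\Omegarl X1{\rd D.}$ (note $D$ is reduced here by hypothesis, so $\rd D. = D$). Since it suffices to check the \lext on a single log resolution, this proves \cref{tame lext} modulo \cref{rounddown} and \cref{base change}. The main obstacle, as flagged above, is verifying the $\phi_i$-nefness of $-(K_{Y_i} + \wt D_i)$ for the intermediate steps; unlike the log crepant final map $f$, here one genuinely needs to track how the MMP in \cref{fact res} interacts with the unperturbed boundary, and this is presumably why \cref{lift elem} was stated with a general $g$-nef hypothesis rather than assuming log crepancy outright.
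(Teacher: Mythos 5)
Your overall reduction of \cref{lift phi} to \cref{lift elem} is the same as the paper's, and your identifications are fine: $D_Y = (\phi_i\inv)_* \wt D_{i+1} + \Exc(\phi_i) = \wt D_i$, tameness of $(Y_i, \wt D_i)$ from~\labelcref{tame.1}, and your treatment of \cref{lift f} via log crepancy is exactly the paper's argument. But the one step you yourself flag as carrying the real content---condition~\labelcref{lift elem.2}, the $\phi_i$-nefness of $-(K_{Y_i} + \wt D_i)$---is precisely the step you do not prove, and the mechanism you sketch for it would fail. The MMP in \cref{fact res} is run for the perturbed boundary $\pi\inv_* D + E^{-1} + (1+\eps)E^{>-1} + \eps E^0$, not for $\wt D_i$, so each $\phi_i$ is only known to be negative for the perturbed log canonical divisor; moreover the curves it contracts are exactly those of discrepancy $> -1$ over $(X,D)$, i.e.\ exactly the components where the perturbed boundary has coefficient $< 1$ while $\wt D_i$ has coefficient $1$. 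Writing $\wt D_i = \Delta_i + \Gamma_i$ with $\Delta_i$ the pushed-forward perturbed boundary, the correction $\Gamma_i$ is strictly positive along the contracted curve $P$, so $(K_{Y_i}+\wt D_i)\cdot P$ cannot be bounded by $(K_{Y_i}+\Delta_i)\cdot P < 0$; the MMP-negativity alone determines no sign. Likewise, the computation~\labelcref{504} plus the Negativity Lemma only yields log crepancy of the \emph{final} map $f$: at an intermediate stage the pushforward of $\eps(E^{>-1}+E^0)+E^{>0}$ is in general not nef over $X$ (otherwise the MMP would already have terminated), so there is no ``analogous local statement'' at stage $i$ to invoke.

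What actually closes the gap is a short computation using the \emph{downstream} pair rather than the MMP steps. By~\labelcref{fact res.2} the exceptional locus of $\phi_i$ is a single curve $P$, which appears in $\wt D_i$ with coefficient one, and by~\labelcref{fact res.1} the pair $(Y_{i+1}, \wt D_{i+1})$ is dlt, hence log canonical. Therefore $K_{Y_i} + \wt D_i = \phi_i^*\big(K_{Y_{i+1}} + \wt D_{i+1}\big) + \lambda P$ with $\lambda = a\big(P, Y_{i+1}, \wt D_{i+1}\big) + 1 \ge 0$. Intersecting with $P$, using $(\phi_i)_* P = 0$ and $P^2 < 0$ (Negativity Lemma), gives $(K_{Y_i} + \wt D_i)\cdot P = \lambda P^2 \le 0$, which is~\labelcref{lift elem.2} since $P$ is the only curve contracted by $\phi_i$. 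With this inserted, the rest of your argument (application of \cref{lift elem} and the composition of pushforwards) goes through as you describe.
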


\begin{proof}[Proof of \cref{lift f}]
If $(X, D)$ is plt, then $f$ is an isomorphism and there is nothing to prove.
Otherwise, we would like to apply \cref{lift elem}.
The tameness condition~\labelcref{lift elem.1} is satisfied by~\labelcref{tame.2}.
It remains to check~\labelcref{lift elem.2}, i.e.~that $-(K_Z + \wt D_r)$ is $f$-nef.
To this end, let $P \subset Z$ be any $f$-exceptional curve and note that
\begin{align*}
\big( K_Z + \wt D_r \big) \cdot P & = f^* (K_X + D) \cdot P && \text{by~\labelcref{fact res.3}} \\
& = 0 && \text{as $f_* P = 0$.}
\end{align*}
So $K_Z + \wt D_r$ is even $f$-numerically trivial.
\cref{lift f} is proved.
\end{proof}

\begin{proof}[Proof of \cref{lift phi}]
Again, we will apply \cref{lift elem} and only Condition~\labelcref{lift elem.2}, the $\phi_i$-nefness of $-(K_{Y_i} + \wt D_i)$, needs to be checked.
Let $P \subset Y_i$ be the unique $\phi_i$-exceptional curve.
Since $(Y_{i+1}, \wt D_{i+1})$ is dlt (in particular, log canonical) and $\wt D_i = (\phi_i\inv)_* \big( \wt D_{i+1} \big) + P$, we have
\begin{equation} \label{1144}
K_{Y_i} + \wt D_i = \phi_i^* \big( K_{Y_{i+1}} + \wt D_{i+1} \big) + \lambda P,
\end{equation}
where $\lambda = a \big( P, Y_{i+1}, \wt D_{i+1} \big) + 1 \ge 0$.
On the other hand, $P^2 < 0$ by the Negativity Lemma.
Hence
\begin{align*}
(K_{Y_i} + \wt D_i) \cdot P & = \big( \phi_i^* \big( K_{Y_{i+1}} + \wt D_{i+1} \big) + \lambda P \big) \cdot P && \text{by~\labelcref{1144}} \\
& = \lambda \cdot P^2 && \text{as $(\phi_i)_* P = 0$} \\
& \le 0.
\end{align*}
\cref{lift phi} now follows from \cref{lift elem}.
\end{proof}

By \cref{lift f} and \cref{lift phi}, also the sheaf
\begin{align*}
\pi_* \Omegal Y1{\pi\inv_* D + E} & = (f \circ \phi_{r-1} \circ \dots \circ \phi_0)_* \, \Omegal{Y_0}1{\wt D_0} \\
& = (f \circ \phi_{r-1} \circ \dots \circ \phi_1)_* \, \Omegarl{Y_1}1{\wt D_1} \\
& = \cdots \\
& = f_* \Omegarl{Y_r}1{\wt D_r} \\*
& = \Omegarl X1D
\end{align*}
is reflexive.
The proof of \cref{tame lext} is thus finished.
\end{proof}

\begin{proof}[Proof of \cref{base change}]
For any object (variety, map, sheaf, \dots) over $k$, we denote the base change to $k'$ by $(-)'$.
Concerning~\labelcref{base change.1}, let $\pi \from Y \to X$ be proper birational, with $Y$ normal.
Then there is a commutative diagram
\[ \xymatrix{
Y' \ar[rr] \ar_-{\pi'}[d] & & Y \ar^-\pi[d] \\
X' \ar[rr] & & X \\
} \]
Since $k'/k$ is a separable field extension, the horizontal maps are \'etale and faithfully flat.
In particular, $X'$ and $Y'$ are still normal, and after possibly replacing them by suitable connected components, $\pi'$ is proper birational.
By assumption, $\pi'_* \Omegar{Y'/k'}q$ is reflexive.
But
\begin{align*}
\pi'_* \Omegar{Y'/k'}q & = \pi'_* \left( \big( \Omegap{Y'/k'}q \big) \ddual \right) & & \text{by definition} \\
& = \pi'_* \bigg[ \Big( \big( \Omegap{Y/k}q \big)' \Big) \ddual \bigg] & & \text{\cite[Ch.~II, Prop.~8.10]{Har77}} \\
& = \pi'_* \left( \big( \Omegar{Y/k}q \big)' \right) & & \PreprintAndPublication{\text{\labelcref{flat refl.1}}}{} \\
& = \big( \pi_* \Omegar{Y/k}q \big)' & & \text{\cite[Ch.~III, Prop.~9.3]{Har77},}
\end{align*}
hence the claim follows\PreprintAndPublication{ from~\labelcref{flat refl.3}}{}.

For~\labelcref{base change.2}, keep notation but assume additionally that $\pi$ is a resolution of singularities.
By the above argument\PreprintAndPublication{ and~\labelcref{flat refl.2}}{}, the sheaf $\pi'_* \Omegar{Y'/k'}q$ is reflexive.
Because $Y' \to Y$ is \'etale, $\pi'$ is in fact a resolution and it follows that $X'$ satisfies the \et for $q$-forms.

The proof in the logarithmic case is similar, and therefore omitted.
\end{proof}

\PreprintAndPublication{
\begin{lem}[Dual commutes with base change] \label{flat refl}
Let $R$ be a noetherian ring, $M$ a finitely generated $R$-module, and $R \subset S$ a flat ring extension.
Set $M_S \defn M \tensor_R S$.
Then:
\begin{enumerate}
\item\label{flat refl.1} The natural map $\Hom R.M.R. \tensor_R S \to \Hom S.M_S.S.$ is an isomorphism.
\item\label{flat refl.2} If $M$ is reflexive, then so is $M_S$.
\item\label{flat refl.3} If $R \subset S$ is faithfully flat, the converse of~\labelcref{flat refl.2} also holds.
\end{enumerate}
\end{lem}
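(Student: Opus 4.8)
The plan is to prove~\labelcref{flat refl.1} directly from a finite presentation of $M$, and then to deduce~\labelcref{flat refl.2} and~\labelcref{flat refl.3} by applying~\labelcref{flat refl.1} twice: for the former one invokes exactness of $(-) \tensor_R S$, and for the latter, faithfully flat descent of isomorphisms.

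For~\labelcref{flat refl.1}: since $R$ is noetherian and $M$ is finitely generated, $M$ is finitely presented, so fix an exact sequence $R^{\oplus a} \to R^{\oplus b} \to M \to 0$. Applying the left exact functor $\Hom R.-.R.$ yields an exact sequence $0 \to \Hom R.M.R. \to R^{\oplus b} \to R^{\oplus a}$, which remains exact after tensoring with the flat module $S$: $0 \to \Hom R.M.R. \tensor_R S \to S^{\oplus b} \to S^{\oplus a}$. On the other hand, base-changing the presentation gives $S^{\oplus a} \to S^{\oplus b} \to M_S \to 0$, so that $0 \to \Hom S.M_S.S. \to S^{\oplus b} \to S^{\oplus a}$ is exact. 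Under the tautological identifications $\Hom R.R^{\oplus n}.R. \tensor_R S = S^{\oplus n} = \Hom S.S^{\oplus n}.S.$ the two maps $S^{\oplus b} \to S^{\oplus a}$ coincide (both are the transpose of the presentation matrix, base-changed to $S$), and the natural map of the statement is exactly the induced map of kernels; hence it is an isomorphism. The only thing requiring care is the verification that the natural map is compatible with these identifications, which is a routine chase with the functoriality of $\Hom$.

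For~\labelcref{flat refl.2}: note first that $M\dual \defn \Hom R.M.R.$ is finitely generated, being a submodule of $R^{\oplus b}$ over the noetherian ring $R$; thus~\labelcref{flat refl.1} applies both to $M$ and to $M\dual$. This gives a natural isomorphism $(M_S)\dual \isom (M\dual)_S$, and applying~\labelcref{flat refl.1} once more (to $M\dual$) a natural isomorphism $(M_S)\ddual \isom (M\ddual)_S$. Tracing through the construction, this last isomorphism carries the biduality map $M_S \to (M_S)\ddual$ to the base change along $R \to S$ of the biduality map $M \to M\ddual$. If $M$ is reflexive, the latter is an isomorphism, and since $(-) \tensor_R S$ is exact, its base change is too; hence $M_S$ is reflexive.

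For~\labelcref{flat refl.3}: the same identification shows that $M_S \to (M_S)\ddual$ is the base change of $M \to M\ddual$. If $M_S$ is reflexive, this base change is an isomorphism; as $R \to S$ is faithfully flat and faithful flatness detects the vanishing of both kernel and cokernel, a homomorphism of $R$-modules which becomes an isomorphism after applying $(-) \tensor_R S$ must already be one. Hence $M \to M\ddual$ is an isomorphism, i.e.\ $M$ is reflexive. Overall the argument is bookkeeping rather than a real difficulty; the only genuine inputs are that noetherianity upgrades ``finitely generated'' to ``finitely presented'' (and keeps $M\dual$ finitely generated), together with the naturality checks that allow the isomorphisms of~\labelcref{flat refl.1} to pass through biduality — and it is precisely those naturality checks where I would expect to spend the bulk of the effort.
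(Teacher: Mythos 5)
Your proposal is correct and follows essentially the same route as the paper: part~\labelcref{flat refl.1} via a finite presentation and comparison of the two exact sequences obtained by dualizing/tensoring in either order, and parts~\labelcref{flat refl.2} and~\labelcref{flat refl.3} by applying~\labelcref{flat refl.1} (also to $\Hom R.M.R.$) to identify the biduality map of $M_S$ with the base change of that of $M$, using exactness respectively faithfully flat descent. The extra care you take (finite generation of the dual, the naturality checks) is exactly what the paper leaves implicit.
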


\begin{proof}
Let $F_1 \to F_0 \to M$ be a finite presentation of $M$.
Dualizing and tensorizing with $S$, we get the first row in the following commutative diagram.
First tensorizing and then dualizing gives the second row.
\[ \xymatrix{
0 \ar[r] & \Hom R.M.R. \tensor_R S \ar[r] \ar[d] & \Hom R.F_0.R. \tensor_R S \ar[r] \ar@{=}[d] & \Hom R.F_1.R. \tensor_R S \ar@{=}[d] \\
0 \ar[r] & \Hom S.M_S.S. \ar[r] & \Hom S.F_{0,S}.S. \ar[r] & \Hom S.F_{1,S}.S.
} \]
The first row is exact because $S$ is flat over $R$, and the second row is exact for general reasons.
The leftmost vertical arrow is the map in question, while the other two are isomorphisms because the $F_i$ are free.
\labelcref{flat refl.1} now follows from the Snake Lemma.

For~\labelcref{flat refl.2}, consider the natural isomorphism $M \to M \ddual$.
By~\labelcref{flat refl.1}, after tensorizing with $S$ it becomes the natural map $M_S \to M_S \ddual$, and it obviously stays an isomorphism.
Hence $M_S$ is reflexive, too.
If $S$ is faithfully flat, we may run the argument backwards, proving~\labelcref{flat refl.3}.
\end{proof}
}{}

\subsection{Proof of \cref{main lc}}

By \cref{rounddown}, we may assume that $(X, D)$ is reduced.
Furthermore, since our ground field $k$ is assumed to be perfect, its algebraic closure $\bar k$ is separable over $k$ and hence by \cref{base change}, we may assume that $k = \bar k$.
The singularities of reduced log canonical surface pairs over an algebraically closed ground field have been classified in~\cite[Cor.~3.31, 3.39, 3.40]{Kollar13}.
According to this classification, there are seven cases to be considered.
Their dual graphs are depicted in Figures~\labelcref{fig cusp}--\labelcref{fig other quot} \vpageref{fig cusp} (the first case is not shown since it has only one exceptional curve).
Here we use the following color and labeling pattern.
The extra information thus contained in the figures is easily verified.

\begin{ntn}
A \emph{plain circle} denotes an exceptional curve with discrepancy equal to~$-1$.
A node shaded in \emph{gray} denotes an exceptional curve with discrepancy $> -1$.
All exceptional curves are smooth rational.
The components of $\pi\inv_* D$ are shown in~\emph{black}.
A \emph{negative number} attached to a vertex denotes the self-intersection of the corresponding curve.
A \emph{leaf} is a curve intersecting at most one other curve, while a \emph{fork} intersects at least three other curves.
\end{ntn}

\begin{figure}[!ht]
\centering
\begin{tikzpicture}
\node[circle,draw] (node0) at (3, 0) { };
\node[circle,draw] (node1) at (2, 1) { };
\node[circle,draw] (node2) at (1, 1) { };
\node (node3) at (0, 1) { \dots };
\node[circle,draw] (node4) at (-1, 1) { };
\node[circle,draw] (node5) at (-2, 1) { };
\node[circle,draw] (node6) at (-3, 0) { };
\node[circle,draw] (node7) at (-2, -1) { };
\node[circle,draw] (node8) at (-1, -1) { };
\node (node9) at (0, -1) { \dots };
\node[circle,draw] (node10) at (1, -1) { };
\node[circle,draw] (node11) at (2, -1) { };

\path (node0) edge[bend right=40] node {} (node1);
\path (node1) edge node {} (node2);
\path (node2) edge node {} (node3);
\path (node3) edge node {} (node4);
\path (node4) edge node {} (node5);
\path (node5) edge[bend right=40] node {} (node6);
\path (node6) edge[bend right=40] node {} (node7);
\path (node7) edge node {} (node8);
\path (node8) edge node {} (node9);
\path (node9) edge node {} (node10);
\path (node10) edge node {} (node11);
\path (node11) edge[bend right=40] node {} (node0);
\end{tikzpicture}
\caption{Cusp~\cite[(3.39.2)]{Kollar13}.} \label{fig cusp}

\vspace*{\floatsep}

\begin{tikzpicture}
\node[circle,draw,fill=black!15] (leaf0) at (3, 1) [label=right:$-2$] { };
\node[circle,draw,fill=black!15] (leaf1) at (-3, 1) [label=left:$-2$] { };
\node[circle,draw,fill=black!15] (leaf2) at (-3, -1) [label=left:$-2$] { };
\node[circle,draw,fill=black!15] (leaf3) at (3, -1) [label=right:$-2$] { };

\node[circle,draw] (fork0) at (-2, 0) { };
\node[circle,draw] (fork1) at (2, 0) { };
\node[circle,draw] (node0) at (-1, 0) { };
\node[circle,draw] (node1) at (1, 0) { };
\node (dotsnode) at (0, 0) { \dots };

\path (leaf0) edge node {} (fork1);
\path (leaf1) edge node {} (fork0);
\path (leaf2) edge node {} (fork0);
\path (leaf3) edge node {} (fork1);
\path (fork0) edge node {} (node0);
\path (node0) edge node {} (dotsnode);
\path (dotsnode) edge node {} (node1);
\path (node1) edge node {} (fork1);
\end{tikzpicture}
\caption{$\Z/2$-quotient of cusp or simple elliptic~\cite[(3.39.3)]{Kollar13}.} \label{fig z2 cusp}

\vspace*{\floatsep}

\begin{tikzpicture}
\node[circle,draw] (fork) at (0, 0) { };
\node[circle,draw,fill=black!15] (node0) at (-3, 0) { };
\node (node1) at (-2, 0) { \dots };
\node[circle,draw,fill=black!15] (node2) at (-1, 0) { };
\node[circle,draw,fill=black!15] (node3) at (1, 0) { };
\node (node4) at (2, 0) { \dots };
\node[circle,draw,fill=black!15] (node5) at (3, 0) { };
\node[draw,fill=black!15] (node6) at (0, -1.25) { $\Gamma_3$};

\path (node0) edge node {} (node1);
\path (node1) edge node {} (node2);
\path (node2) edge node {} (fork);
\path (fork) edge node {} (node3);
\path (node3) edge node {} (node4);
\path (node4) edge node {} (node5);
\path (fork) edge node {} (node6);

\draw[thick,decoration={brace,mirror,amplitude=8},decorate] ([yshift=-3mm]node0.west) --node[below=3mm]{$\Gamma_1$} ([yshift=-3mm]node2.east);
\draw[thick,decoration={brace,mirror,amplitude=8},decorate] ([yshift=-3mm]node3.west) --node[below=3mm]{$\Gamma_2$} ([yshift=-3mm]node5.east);

\end{tikzpicture}
\caption{Other quotient of simple elliptic~\cite[(3.39.4)]{Kollar13}.
$\Gamma_3$~is likewise a chain and $(\det \Gamma_i)$ is either $(3, 3, 3)$, $(2, 4, 4)$ or $(2, 3, 6)$.} \label{fig other quot simple elliptic}

\vspace*{\floatsep}

\begin{tikzpicture}
\node[circle,draw,fill=black!15] (node00) at (-2, 0) { };
\node[circle,draw,fill=black!15] (node01) at (2, 0) { };
\node[circle,draw,fill=black!15] (node02) at (-1, 0) { };
\node[circle,draw,fill=black!15] (node03) at (1, 0) { };
\node (dotsnode0) at (0, 0) { \dots };
\node[circle,draw,fill=black] (boundary00) at (-3, 0) { };

\path (node00) edge node {} (node02);
\path (node02) edge node {} (dotsnode0);
\path (dotsnode0) edge node {} (node03);
\path (node03) edge node {} (node01);
\path (node00) edge node {} (boundary00);

\node[circle,draw,fill=black!15] (node10) at (-2, 1) { };
\node[circle,draw,fill=black!15] (node11) at (2, 1) { };
\node[circle,draw,fill=black!15] (node12) at (-1, 1) { };
\node[circle,draw,fill=black!15] (node13) at (1, 1) { };
\node (dotsnode1) at (0, 1) { \dots };

\path (node10) edge node {} (node12);
\path (node12) edge node {} (dotsnode1);
\path (dotsnode1) edge node {} (node13);
\path (node13) edge node {} (node11);

\node[circle,draw] (node20) at (-2, -1) { };
\node[circle,draw] (node21) at (2, -1) { };
\node[circle,draw] (node22) at (-1, -1) { };
\node[circle,draw] (node23) at (1, -1) { };
\node (dotsnode2) at (0, -1) { \dots };
\node[circle,draw,fill=black] (boundary20) at (-3, -1) { };
\node[circle,draw,fill=black] (boundary21) at (3, -1) { };

\path (node20) edge node {} (node22);
\path (node22) edge node {} (dotsnode2);
\path (dotsnode2) edge node {} (node23);
\path (node23) edge node {} (node21);
\path (node20) edge node {} (boundary20);
\path (node21) edge node {} (boundary21);

\node at (0, .5) { or };
\node at (0, -.5) { or };
\end{tikzpicture}
\caption{The three possibilities for a cyclic quotient~\cite[(3.40.1)]{Kollar13}.} \label{fig cyclic}

\vspace*{\floatsep}

\begin{tikzpicture}
\node[circle,draw,fill=black!15] (leaf0) at (3, 1) [label=right:$-2$] { };
\node[circle,draw,fill=black!15] (leaf3) at (3, -1) [label=right:$-2$] { };
\node[circle,draw,fill=black!15] (fork0) at (2, 0) { };
\node[circle,draw,fill=black!15] (node0) at (0, 0) { };
\node (dotsnode) at (1, 0) { \dots };

\path (leaf0) edge node {} (fork0);
\path (leaf3) edge node {} (fork0);
\path (fork0) edge node {} (dotsnode);
\path (dotsnode) edge node {} (node0);

\node[circle,draw,fill=black!15] (leaf10) at (10, 1) [label=right:$-2$] { };
\node[circle,draw,fill=black!15] (leaf11) at (10, -1) [label=right:$-2$] { };
\node[circle,draw] (fork1) at (9, 0) { };
\node[circle,draw] (node10) at (7, 0) { };
\node (dotsnode1) at (8, 0) { \dots };
\node[circle,draw,fill=black] (boundary) at (6, 0) { };

\path (leaf10) edge node {} (fork1);
\path (leaf11) edge node {} (fork1);
\path (fork1) edge node {} (dotsnode1);
\path (dotsnode1) edge node {} (node10);
\path (node10) edge node {} (boundary);

\node at (4.75, 0) { or };
\end{tikzpicture}
\caption{The two possibilities for a dihedral quotient~\cite[(3.40.2)]{Kollar13}.} \label{fig dihedral}

\vspace*{\floatsep}

\begin{tikzpicture}
\node[circle,draw,fill=black!15] (fork) at (0, 0) { };
\node[circle,draw,fill=black!15] (node0) at (-3, 0) { };
\node (node1) at (-2, 0) { \dots };
\node[circle,draw,fill=black!15] (node2) at (-1, 0) { };
\node[circle,draw,fill=black!15] (node3) at (1, 0) { };
\node (node4) at (2, 0) { \dots };
\node[circle,draw,fill=black!15] (node5) at (3, 0) { };
\node[draw,fill=black!15] (node6) at (0, -1.25) { $\Gamma_3$};

\path (node0) edge node {} (node1);
\path (node1) edge node {} (node2);
\path (node2) edge node {} (fork);
\path (fork) edge node {} (node3);
\path (node3) edge node {} (node4);
\path (node4) edge node {} (node5);
\path (fork) edge node {} (node6);

\draw[thick,decoration={brace,mirror,amplitude=8},decorate] ([yshift=-3mm]node0.west) --node[below=3mm]{$\Gamma_1$} ([yshift=-3mm]node2.east);
\draw[thick,decoration={brace,mirror,amplitude=8},decorate] ([yshift=-3mm]node3.west) --node[below=3mm]{$\Gamma_2$} ([yshift=-3mm]node5.east);

\end{tikzpicture}
\caption{Other quotient of a smooth surface~\cite[(3.40.3)]{Kollar13}.
$(\det \Gamma_i)$ is either $(2, 3, 3)$, $(2, 3, 4)$ or $(2, 3, 5)$.} \label{fig other quot}
\end{figure}

Since \cref{main lc} is local, we may shrink $X$ and assume that $(X, D)$ has only one singular point.
We use notation from \cref{fact res}, applied to the minimal resolution $\pi \from Y \to X$ of $(X, D)$.
In particular, $E$ is the exceptional locus of $\pi$ and $r$ is the number of contractions performed by the MMP before the minimal dlt model is reached.
The classification is then as follows.
(The names are actually valid only in char\-acteristic zero.
Here they are only meant for easier reference and should not be taken literally.)

\begin{enumerate}
\stepcounter{thm}
\item (Simple elliptic,~\cite[(3.39.1)]{Kollar13}) Here $D = 0$ and $E$ consists of a single smooth elliptic curve, which has discrepancy $-1$.
So $r = 0$ and the tameness condition on $\pi$ is automatically satisfied.
In this case, \cref{main lc} thus follows directly from \cref{tame lext}.

\item (Cusp, \cref{fig cusp}) Again, there are no curves of discrepancy $> -1$, so $r = 0$ and we conclude as before.

\item\label{z2 cusp} ($\Z/2$-quotient of cusp or simple elliptic, \cref{fig z2 cusp}) Here $r = 4$ and each step $\phi_i$ contracts a curve $C_i \subset U_i \subset Y_i$, where $U_i$ is a smooth open subset of~$Y_i$ and $C_i^2 = -2$.
By~\cite[Thm.~3.32]{Kollar13}, the resulting singularity is \'etale locally\footnote{As stated, \cite[Thm.~3.32]{Kollar13} gives the result only up to completion (which would also be sufficient), but the proof shows that there is a map $\phi_i(U_i) \to \Spec k[u^2, uv, v^2]$ which is \'etale at the point~$\phi_i(C_i)$.} isomorphic to the vertex of $\Spec k[u^2, uv, v^2]$.
Since $\operatorname{char} k \ge 7 > 2$, this singularity is actually a $\Z/2$-quotient of a smooth surface.
Then by the usual norm argument, $2 \cdot \Delta_i$ is Cartier for every integral Weil divisor $\Delta_i$ on $Y_i$, where $0 \le i \le 4$.
Applying this to $K_{Y_i} + \wt D_i$, we see that $\pi$ is a tame resolution and we conclude by \cref{tame lext}.

\item\label{other quot simple elliptic} (Other quotient of simple elliptic, \cref{fig other quot simple elliptic}) The three chains of rational curves $\Gamma_i$ can obviously be treated independently of each other, hence we will concentrate on, say, $\Gamma_1$.
The first curve contracted has to be the leaf, since otherwise there would be two components of $\wt D_1$ intersecting at a singular point of $Y_1$, contradicting \cref{dlt structure}.
Repeating this argument, we see that the curves in $\Gamma_1$ are contracted in sequence, starting from the leaf and proceeding towards the fork.
In particular, at each step there is only one singular point and it is obtained by contracting a subchain of $\Gamma_1$.
But $\det \Gamma_1 \le 6$ and by the recurrence relation in~\cite[(3.33.1)]{Kollar13}, the same also holds for all its subchains.
By~\cite[Thm.~3.32]{Kollar13} and the assumption $\operatorname{char} k \ge 7$, the singular point of each $Y_i$ is a quotient by a finite group of order $\le 6$.
As in the previous case~\labelcref{z2 cusp}, this implies that $\pi$ is tame and hence \cref{main lc} holds also in this case.

\item\label{cyclic} (Cyclic quotient, \cref{fig cyclic}) There are three subcases, according to whether the boundary $D$ has zero, one or two components.
In the first two cases, it actually not true that $(X, D)$ has a tame resolution, since the chain $E$ can be arbitrarily long and hence infinitely many (in fact, all) primes would have to be excluded.
So we cannot apply \cref{tame lext}.
But note that for every exceptional curve $P \subset Y = Y_0$, we have $\deg ( K_P + P^c) \le 0$, where $P^c \defn \Diff P{\wt D_0 - P}$.
(The degree is $-1$ for the leaves and $0$ for the other curves, since there is no fork.)
Also the pair $(Y_0, \wt D_0)$ is clearly tamely dlt, since it is even snc.
Hence in these cases, \cref{main lc} is a direct consequence of \cref{lift elem} applied to~$\pi$.
In the third subcase, we may follow the same argument or else note that $r = 0$, so $\pi$ is tame---it boils down to the same thing.

\item (Dihedral quotient, \cref{fig dihedral}) We have two subcases: either $D = 0$ or $D \ne 0$.
If $D = 0$, again there may not be a tame resolution.
Instead, the MMP needs to be chosen in such a way that first the two $(-2)$-curves intersecting the fork are contracted.
The resulting pairs $(Y_i, \wt D_i)$, $i = 1, 2$, are tamely dlt by the same reasoning as in case~\labelcref{z2 cusp}.
If $P \subset Y_2$ is the image of the fork, both singular points of $Y_2$ appear in $P^c \defn \Diff P{\wt D_2 - P}$ with coefficient either\footnote{As $K_{Y_2} + \wt D_2$ is not Cartier, the coefficient is actually $1/2$, but we only need an upper bound on the different.} zero or $\frac12$, by \cref{compute diff}.
Hence
\[ \deg ( K_P + P^c ) \le -2 + 1 + \frac12 + \frac12 = 0. \]
If $P \subset Y_2$ is any other exceptional curve, the above inequality also holds, as in case~\labelcref{cyclic}.
We can therefore apply \cref{lift elem} to the map $Y_2 \to X$ to conclude.

If $D \ne 0$, then $r = 2$ and only the two $(-2)$-curves are contracted.
The resolution $\pi$ is then tame by exactly the same argument as in case~\labelcref{z2 cusp}.

\item (Other quotient of a smooth surface, \cref{fig other quot}) The argument is similar to case~\labelcref{other quot simple elliptic}.
First the chains $\Gamma_i$ are contracted, starting from the leaves and progressing towards the fork.
As $\det \Gamma_i \le 5 < 7 \le \operatorname{char} k$, this implies that $(Y_i, \wt D_i)$ is tamely dlt for $0 \le i \le r - 1$.
Furthermore $X$ is log terminal and $D = 0$, so $(X, D)$ is plt and case~\labelcref{tame.2} of the definition of tameness applies.
So $\pi$ is tame and \cref{tame lext} gives the result.
\end{enumerate}

\noindent
Since we have now worked our way through all the cases, the proof of \cref{main lc} is finished. \qed

\section{Proof of \cref{main reg}} \label{sec main reg}

This section contains the proof of our second main result, \cref{main reg}.
The argument proceeds in three steps.

\subsection{Passing to a log resolution}

First of all, by blowing up $Y$ further we may turn $\Exc(\pi) + D_Y$ into an snc divisor.
We need to show that this does not change $\det(E_i \cdot E_j)$ up to sign.
Indeed, after renumbering we may assume that we are blowing up a point $p \in Y$ which is contained exactly in the exceptional curves $E_1, \dots, E_k$, where $k \le \ell$.
Let $r_i$ be the multiplicity of $E_i$ at $p$.
Set $A = (a_{ij}) = ( - E_i \cdot E_j)$, the negative of the intersection matrix on $Y$, and $\wt A$ the analogous matrix after blowing up $p$, with the new exceptional curve put first.
Also, let $A_0 = \sMat{0 & 0 \\ 0 & A}$ with one additional zero row and column.
Then
\[ \wt A = A_0 + \Mat{
1 & - r_1 & - r_2 & \cdots & - r_k & 0 \\
- r_1 & r_1^2 & r_1 r_2 & \cdots & r_1 r_k & 0 \\
\vdots & \vdots & \vdots & \ddots & \vdots & \vdots \\
- r_k & r_1 r_k & r_2 r_k & \cdots & r_k^2 & 0 \\
0 & 0 & 0 & \dots & 0 & 0
}, \]
where there are $\ell - k$ zero rows and columns, respectively.
By adding $r_i \cdot \text{(first column)}$ to the $(i + 1)$-st column for all $1 \le i \le k$, the matrix $\wt A$ is transformed into
\[ \wt A' = \Mat{
1 & 0 & \cdots & 0 \\
- r_1 & a_{11} & \cdots & a_{1\ell} \\
\vdots & \vdots & & \vdots \\
- r_k & \vdots & & \vdots \\
0 & a_{\ell 1} & \dots & a_{\ell \ell}
}, \]
while keeping the determinant unchanged.
Expanding $\det \wt A'$ by the first row, we see that $\det \wt A' = \det A$.
Hence we may make the following

\begin{awlog}
The map $\pi \from Y \to X$ is a log resolution of $(X, D)$.
\end{awlog}

\subsection{Dropping the non-exceptional divisor} \label{AB 1}

Pick an irreducible component $P \subset \rd D.$, and let $P_Y$ be its strict transform on $Y$.
Then consider the short exact sequence given by the residue map~\cite[2.3(b)]{EV92}
\[ 0 \lto \Omegal Y1{\rd D_Y. + E - P_Y} \lto \Omegal Y1{\rd D_Y. + E} \lto \O{P_Y} \lto 0. \]
Pushing it forward via $\pi$ yields
\[ 0 \lto \pi_* \Omegal Y1{\rd D_Y. + E - P_Y} \lto \pi_* \Omegal Y1{\rd D_Y. + E} \lto \sQ \lto 0, \]
where $\sQ \subset \pi_* \O{P_Y}$.
In particular, $\sQ$ is supported on $P$ and it is torsion-free as an $\O P$-module.
Hence $\sQ$ has only one associated prime, which is of height $1$.
It then follows from~\cite[Cor.~1.5]{Hartshorne80} that the sheaf $\pi_* \Omegal Y1{\rd D_Y. + E - P_Y}$ is likewise reflexive.
Repeating this argument for all components $P \subset \rd D.$, we arrive at the conclusion that
\[ \pi_* \Omegal Y1E \]
is reflexive, and hence isomorphic to $\Omegar X1$.
In other words, $(X, \emptyset)$ satisfies the \lext.

\subsection{Dropping the exceptional divisor}

Set $E = E_1 + \cdots + E_\ell$, and consider the residue sequence~\cite[2.3(a)]{EV92}
\[ 0 \lto \Omegap Y1 \lto \Omegal Y1E \lto \bigoplus_{i=1}^\ell \O{E_i} \lto 0. \]
We need to show that
\[ \HH0.Y.\Omegap Y1. \lto \HH0.Y.\Omegal Y1E. \]
is an isomorphism.
It suffices to show that the connecting homomorphism
\[ \delta \from \bigoplus_{i=1}^\ell \HH0.E_i.\O{E_i}. \lto \HH1.Y.\Omegap Y1. \]
is injective.
To this end, consider the restriction map
\[ r \from \HH1.Y.\Omegap Y1. \lto \bigoplus_{i=1}^\ell \HH1.E_i.\Omegap{E_i}1.. \]
We will show that the composition
\begin{equation} \label{r circ delta}
r \circ \delta \from \bigoplus_{i=1}^\ell \HH0.E_i.\O{E_i}. \lto \bigoplus_{i=1}^\ell \HH1.E_i.\Omegap{E_i}1.
\end{equation}
is an isomorphism.
In fact, on the left-hand side choose the basis consisting of the constant functions $\mathbf 1_{E_i}$, and on each summand of the right-hand side, choose the basis canonically determined by the trace map.
It is easy to see\footnote{For more details, the reader is advised to consult the proof of~\cite[Prop.~3.2]{GK13}, which is independent of the characteristic.} that with respect to these bases, \labelcref{r circ delta} is given by the intersection matrix $A \defn (E_i \cdot E_j)$.
By the Negativity Lemma~\cite[Lemma~3.40]{KM98}, $A$ is negative definite (in particular, invertible) when considered as an integer matrix.
Here, of course, we have to regard $A$ as defined over our ground field $k$ instead.
However, by our assumption, the characteristic $p$ of $k$ does not divide $\det A$.
Hence the matrix $A$ remains invertible when reduced modulo $p$.
In other words, $r \circ \delta$ is an isomorphism, and then $\delta$ is injective.
It follows that the sheaf $\pi_* \Omegap Y1$ is reflexive. \qed

\section{The characteristic zero Extension Theorem revisited} \label{ext higher-dim zero}

The purpose of this section is to explain how the ideas in this paper yield a new proof of~\cite[Thm.~1.5]{GKKP11}, repeated below as \cref A.
Even though we are ultimately only interested in that statement, in order to give a self-contained argument we have to set up an inductive procedure involving \cref B below.
The latter statement has already been proven in much greater generality in~\cite[Thm.~1.2]{Gra12}, but we must not use that result in our proof in order to avoid a circular dependence on~\cite{GKKP11}.

For a very brief run-down of \cC-pairs and \cC-differentials, we refer to \cref{sec C-pairs} below.
In the whole section, all varieties are assumed to be defined over the complex numbers.

\begin{thmalph} \label{A}
Let $(X, D)$ be a complex log canonical pair.
Then $(X, D)$ satisfies the \lext.
\end{thmalph}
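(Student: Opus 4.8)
The plan is to transplant the argument for \cref{main lc} to arbitrary dimension, substituting for the surface-specific inputs their higher-dimensional analogues: the dlt Minimal Model Program \cite{KK10} in place of \cite{Tanaka18}, Bogomolov--Sommese vanishing \cite[Cor.~6.9]{EV92} in place of the trivial vanishing of $\HH0.C.\sL.$ for a negative line bundle $\sL$ on a curve $C$, and --- crucially --- the statement in lower dimension itself. I would therefore argue by induction on $n \defn \dim X$, the cases $n \le 1$ being vacuous. In tandem with \cref{A} one carries along the auxiliary vanishing statement \cref{B} for \cC-differentials; the proof of \cref{B} in dimension $n$ will invoke \cref{A} only in dimensions $< n$, which is exactly what makes the coupled induction legitimate and keeps us clear of \cite{GKKP11}.

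First I would reduce to the dlt case. By \cite[Thm.~3.1]{KK10} there is a \Q-factorial dlt pair $(Z, \Delta)$ together with a projective birational morphism $f \from Z \to X$ with $\Delta = f\inv_* D + \Exc(f)$ and $K_Z + \Delta = f^*(K_X + D)$. As $f$ is crepant, $-(K_Z + \Delta)$ is $f$-numerically trivial, hence $f$-nef. Granting that \cref{res,restr} hold in arbitrary dimension over $\C$ --- where no tameness hypothesis is needed, so that either the constructions of \cite{GKKP11, Gra12} apply verbatim (using only adjunction, not the Extension Theorem) or the local-quotient method of \cref{dlt quotient} can be rerun --- the proof of \cref{lift elem} goes through unchanged and yields $f_* \Omegarl Zq{\rd\Delta.} \isom \Omegarl Xq{\rd D.}$ for every $q$. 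Composing $f$ with a log resolution of $(Z, \Delta)$, it then suffices to prove that the \Q-factorial dlt pair $(Z, \Delta)$ satisfies the \lext.

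Next I would factor a log resolution $\pi \from W \to Z$ via the MMP exactly as in \cref{fact res}, writing $\pi$ as a composite of divisorial contractions $\phi_i \from W_i \to W_{i+1}$ with $(W_i, \wt\Delta_i)$ dlt and \Q-factorial and $-(K_{W_i} + \wt\Delta_i)$ being $\phi_i$-nef, and then extend $q$-forms along each $\phi_i$ by the higher-dimensional ``Lifting forms along a non-positive map'' lemma. Its proof mirrors \cref{lift elem}: from $\pi^*\sigma \in \HH0.W.\Omegarl W q{\rd\Delta.}(G)\ddual.$ with $G$ a nonzero effective exceptional divisor, one uses the residue sequence to factor the induced nonzero map $\O W(-G) \to \Omegarl Wq{\rd\Delta.}$ through the kernel $\Omegarl Wq{\rd\Delta. - P}$ for a suitable exceptional component $P$, then the restriction sequence to factor it further through $\Omegarl Wq{\rd\Delta.}(-P)\ddual$, and finishes by decreasing induction on $G$. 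The step where \cref{main lc} merely cited that a negative line bundle on a curve has no sections now becomes a genuine vanishing: the obstruction to each factorization is a nonzero map out of $\O P(-mG)$ into a reflexive (symmetric) power of $\Omegarl P{q-1}{P^c}$ on the $(n-1)$-dimensional dlt pair $(P, P^c)$, with $-G\big|_P$ big along the relevant exceptional locus; by \cref{B} in dimension $n-1$ together with \cite[Cor.~6.9]{EV92} on a log resolution of $(P, P^c)$, no such map can exist --- and for $q = 1$, where the target is just $\O P$, this is immediate as in \cref{factor res}. Piecing the contractions together as in \cref{tame lext} then completes the induction.

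The main obstacle is precisely this vanishing input. On surfaces the obstruction spaces are sections of negative line bundles on curves and vanish for free; in higher dimension the exceptional divisors $P$ are themselves singular log canonical pairs, and the obstruction lives in a cohomology group built from logarithmic reflexive differentials on $P$ twisted by a big class. Ruling it out requires the full strength of Bogomolov--Sommese vanishing on $P$, i.e.\ \cref{B} in dimension $n-1$, which is why \cref{A} and \cref{B} cannot be disentangled and must be proved by simultaneous induction; it is also why this argument, unlike the one for \cref{main lc}, cannot be made entirely Hodge-theory-free, since \cite[Cor.~6.9]{EV92} is itself Hodge-theoretic. The genuinely delicate point is the bookkeeping of the coupled induction --- ensuring that at each stage \cref{B} in dimension $n$ is only ever fed \cref{A} in dimensions strictly below $n$ --- so that no circular appeal to \cite{GKKP11} creeps back in.
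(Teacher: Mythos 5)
Your overall architecture is the same as the paper's: a coupled induction between \cref{A} and the Bogomolov--Sommese statement \cref{B}, a dlt model/MMP factorization of a log resolution, and a lifting lemma run through residue and restriction sequences for dlt pairs. However, the key vanishing step as you describe it would fail. You attribute the factorization of the map out of $\O W(-mG)$ entirely to \cref{B} (plus \cite[Cor.~6.9]{EV92}), but when the rank-one subsheaf produced by the residue or restriction map lands in top wedge degree---i.e.\ when the relevant $q$ equals the dimension of the variety on which bigness is tested---Bogomolov--Sommese says nothing: a big rank-one subsheaf of the (log pluri-)canonical sheaf is perfectly possible. That case is excluded only by the non-positivity hypothesis; in the paper's \cref{lift elem 0} the assumption is that $K_P + P^c$ is not $g\big|_P$-big, and for $q = \dim F$ the contradiction is with that assumption, not with \cref{B}. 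You do impose relative nefness of $-(K_{W_i} + \wt\Delta_i)$, but you never feed it into the obstruction argument, so your vanishing has a hole exactly in the top-degree case. A second, related gap: $-G\big|_P$ is not big on $P$; it is only big on the \emph{general fibres} of $g\big|_P$, and even this requires more than the numerical Negativity Lemma that sufficed on surfaces---one needs the Big Negativity Lemma (\cref{neg lemma}, i.e.\ \cite[Prop.~4.1]{Gra12}) together with the \cC-pair restriction machinery \cite[Prop.~7.3]{Gra12} to push the saturated image of the obstruction to a general \emph{projective} fibre $F$ and measure its \cC-Kodaira dimension there (the subsheaf is merely reflexive, not \Q-Cartier, and \cref{B} applies only to projective pairs). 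Your phrase ``big along the relevant exceptional locus'' papers over precisely this mechanism.

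The induction itself is also incomplete as stated: you never indicate how \cref{B} is proven, and your assertion that \cref{B} in dimension $n$ needs \cref{A} only in dimensions $<n$ is not how the dependency runs---the paper proves \cref{A}$_n \Rightarrow$ \cref{B}$_n$ (same dimension) by \Q-factorializing, running the argument of \cite[Thm.~7.2]{GKKP11} together with \cite[Cor.~6.9]{EV92} on a log resolution (this is where extension, i.e.\ \cref{A}$_n$, enters), and upgrading to \cC-Kodaira dimension by the covering trick of \cite[Sec.~7]{JK11}; one may not simply quote \cite[Thm.~1.2]{Gra12}, on pain of circularity through \cite{GKKP11}. Without some such argument half of your coupled induction is missing. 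Minor repairable points: the higher-dimensional MMP factorization contains flips (extension along these is automatic by reflexivity), and since the MMP is not run on dlt pairs directly one needs the perturbation trick from \cite[Thm.~3.1]{KK10}.
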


\begin{thmalph}[Bogomolov--Sommese vanishing] \label{B}
Let $(X, D)$ be a complex-projective dlt \cC-pair and $\sA \subset \Sym_\cC^{[1]} \Omegal XrD = \Omegarl Xr{\rd D.}$ a rank one reflexive subsheaf.
Then the \cC-Kodaira dimension $\kappa_\cC(\sA) \le r$.
\end{thmalph}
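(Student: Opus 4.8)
The plan is to prove \cref B by induction on $n \defn \dim X$, arranged so that \cref A is already available in dimension $n$ by the time \cref B in dimension $n$ is taken up. This causes no circularity: \cref A in dimension $n$ is proven via the lifting technique of \cref{lift elem} adapted to higher dimensions, and the vanishing statements that technique requires live on the exceptional divisors of a resolution, which have dimension $n-1$; so its proof reaches back only to \cref A and \cref B in dimensions strictly below $n$. Thus, when proving \cref B in dimension $n$ we may assume \cref A in dimensions $\le n$ and \cref B in dimensions $< n$. The cases $r = 0$ --- where $\Sym_\cC^{[1]} \Omegal X0D = \O X$, so $\sA$ is the reflexive hull of an ideal sheaf and $\kappa_\cC(\sA) \le 0$ --- and $r = n$ --- where the \cC-Iitaka dimension of any sheaf is at most $\dim X$ --- are trivial, so assume $0 < r < n$ and, aiming for a contradiction, $\kappa_\cC(\sA) \ge r+1$.

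The first step is to reduce to the case $n = r+1$ by general hyperplane sections. If $n > r+1$, I would take a general member $H$ of a very ample linear system on $X$; then $(X, \rd D. + H)$ is again dlt, hence so is the \cC-pair $(H, D|_H)$ of dimension $n-1$. On the snc locus of $(X, D)$ the conormal sequence identifies the restriction of $\Sym_\cC^{[1]} \Omegal XrD$ to $H$ as an extension of $\Sym_\cC^{[1]} \Omegal HrD$ by $\Sym_\cC^{[1]} \Omegal H{r-1}D \tensor \O H(-H)$; extending this over all of $H$ by reflexivity, exactly as in the proof of \cref{restr}, and saturating the image of $\sA|_H$, one gets either an embedding $\sA|_H \inj \Sym_\cC^{[1]} \Omegal HrD$ or an embedding $\sA|_H \tensor \O H(H) \inj \Sym_\cC^{[1]} \Omegal H{r-1}D$. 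The second alternative is impossible: by \cref B in dimension $n-1$ it would force $\kappa_\cC(\sA|_H) \le r-1$, whereas for general $H$ one has $\kappa_\cC(\sA|_H) \ge \min\{\kappa_\cC(\sA), \dim H\} = r+1$. So the first alternative holds; since again $\kappa_\cC(\sA|_H) \ge r+1$, we iterate until $\dim X = r+1$.

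It remains to treat the case $\dim X = r+1 = n$, with $\sA \inj \Sym_\cC^{[1]} \Omegal X{n-1}D = \Omegarl X{n-1}{\rd D.}$ of \cC-Kodaira dimension $n$. Here I would choose a log resolution $\pi \from Y \to X$ of $(X, D)$, with $D_Y$ the strict transform of $D$ and $E$ the reduced exceptional divisor, so that $\rd D_Y. + E$ has simple normal crossings. By \cref A applied to the log canonical pair $(X, D)$ --- available in dimension $n$ --- every reflexive logarithmic $(n-1)$-form on $X$ extends over $Y$, so $\sA \inj \pi_* \Omegal Y{n-1}{\rd D_Y. + E}$; by adjunction and reflexivity this produces a rank one reflexive subsheaf $\pi^{[*]} \sA \inj \Omegal Y{n-1}{\rd D_Y. + E}$, which on the smooth variety $Y$ is a saturated line subsheaf of a locally free sheaf with $\kappa_\cC(\pi^{[*]} \sA) = \kappa_\cC(\sA)$. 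The classical Bogomolov--Sommese vanishing theorem~\cite[Cor.~6.9]{EV92} on $Y$ --- combined with the standard Kawamata cyclic-cover reduction of the \cC-refined statement to the plain one, which does not use~\cite{GKKP11} --- then gives $\kappa_\cC(\pi^{[*]} \sA) \le n-1$, contradicting $\kappa_\cC(\sA) \ge r+1 = n$. This closes the induction.

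The step I expect to be the main obstacle is this last one: it is precisely here that $\sA$ must be compared with genuine differential forms on a resolution, and hence precisely here that the \lext is indispensable. Everything else is bookkeeping --- above all, checking that the simultaneous induction can indeed be run in the order ``\lext in dimension $n$, then Bogomolov--Sommese in dimension $n$''. That in turn comes down to confirming that the higher-dimensional analogue of \cref{lift elem}, when used to lift $q$-forms on an $n$-dimensional pair, only ever needs vanishing statements on the exceptional divisors (dimension $n-1$) --- and so depends on \cref B in dimension $n-1$ but not in dimension $n$. A secondary technical point, routine but in need of care for dlt rather than snc pairs, is transporting the \cC-orbifold structure through the hyperplane sections and the resolution, which is done with the same reflexivity arguments already used for Theorems~\labelcref{res} and~\labelcref{restr}.
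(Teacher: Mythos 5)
Your overall route is the paper's: assuming \cref A in dimension $n$, transplant $\sA$ to a log resolution via the \lext, apply classical Bogomolov--Sommese for snc pairs~\cite[Cor.~6.9]{EV92} there, and recover the \cC-refinement by a covering trick (the paper cites~\cite[Sec.~7]{JK11}); your preliminary hyperplane-section reduction to $\dim X = r+1$ is an extra detour the paper does not need, and it introduces its own unproven restriction statements ($\kappa_\cC(\sA|_H) \ge \min\{\kappa_\cC(\sA), \dim H\}$ for a reflexive, not necessarily \Q-Cartier sheaf), but since the final step works for arbitrary $r$ you could simply delete it.

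The genuine gap is in that final step: you assert that the reflexive pullback satisfies $\kappa_\cC(\pi^{[*]}\sA) = \kappa_\cC(\sA)$, and this is not justified --- indeed it is exactly the point that can fail when $\sA$ is not \Q-Cartier. The Kodaira dimension of $\sA$ is computed from sections of the reflexive powers $\sA^{[m]}$; these are reflexive \emph{symmetric} differentials, and the \lext only controls the forms themselves (the case $m=1$), not their powers, so a section of $\sA^{[m]}$ pulls back to a section of $(\pi^{[*]}\sA)^{[m]}$ only after an uncontrolled twist by the exceptional divisors. Consequently the saturation of $\pi^{[*]}\sA$ inside $\Omegal Y{n-1}{\rd D_Y. + E}$ may well have strictly smaller ($\cC$-)Kodaira dimension than $\sA$, and the contradiction with~\cite[Cor.~6.9]{EV92} is not reached. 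This is why the \Q-Cartier hypothesis appears in~\cite[Thm.~7.2]{GKKP11}, and why the paper's proof begins by passing to a \Q-factorialization of the dlt pair (via~\cite{BCHM10}, cf.~\cite[Thm.~9.2]{Gra12}): the morphism is small, so $\kappa_\cC(\sA)$ is unchanged, and afterwards $\sA$ is \Q-Cartier, so that for suitable $m$ one can compare with the honest line bundle $\pi^* \O X(m\sA)$, whose Kodaira dimension is preserved under the birational pullback. Your argument becomes correct once you insert this \Q-factorialization step (and take the saturation of the image of $\pi^{[*]}\sA$, which only increases the Kodaira dimension) before invoking classical Bogomolov--Sommese on $Y$.
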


The induction runs as follows, where the start of induction (dimension one) is trivial.
Here, of course, ``\cref A$_n$'' means ``\cref A for $X$ of dimension at most $n$'', and ditto for \cref B$_n$.
\begin{itemize}
\item \cref A$_n$ implies \cref B$_n$, and
\item \cref B$_n$ implies \cref A$_{n+1}$.
\end{itemize}
While the proofs of both directions do draw on some of the more elementary arguments in~\cite{GKKP11}, we stress that the technical core of that paper is not used.
Hence it still seems fair to say that our proof is ``new''.

\subsection{Background on \cC-pairs} \label{sec C-pairs}

For a more thorough treatment, see~\cite[Sec.~5]{Gra12} and the references therein.
A \emph{\cC-pair} is a pair $(X, D)$ in the usual sense, where $D = \sum_i (1 - \frac1{n_i}) D_i$ with $n_i \in \N \cup \set\infty$.
One then defines \emph{adapted morphisms} $\gamma \from Y \to X$, essentially by requiring that the ramification order over $D_i$ is equal to $n_i$.
The sheaves of \emph{\cC-differentials} are subsheaves
\[ \Sym_\cC^{[m]} \Omegal XrD \subset \big( \!\Sym^{[m]} \Omegap Xr \big) (* \rp D.) \]
defined by the condition that the pullback of a local section under an adapted morphism has at worst logarithmic poles along $\supp \gamma^* \rd D.$.
We have
\[ \Sym^{[m]} \Omegal Xr{\rd D.} \subset \Sym_\cC^{[m]} \Omegal XrD \]
and for $m = 1$, this is an equality.
Let $\sA \subset \Sym_\cC^{[1]} \Omegal XrD$ be a Weil divisorial subsheaf.
The \emph{\cC-Kodaira dimension} $\kappa_\cC(\sA)$ is defined to be the maximum of the dimensions of $\overline{\phi_m(X)}$, where $\phi_m$ is the rational map given by the global sections of $\Sym_\cC^{[m]} \sA$.
Here $\Sym_\cC^{[m]} \sA$ is the saturation of $\sA^{[m]}$ inside $\Sym_\cC^{[m]} \Omegal XrD$.

\subsection{Residue and restriction sequences}

We need to have the results of \cref{res dlt} at our disposal in this setting.
These are, to a large extent, already contained in~\cite[Sec.~11]{GKKP11} and~\cite[Sec.~6]{Gra12}.
Hence we only give a sketch of the proof.

\begin{thm}[Residue sequence] \label{res 0}
Let $(X, D)$ be a dlt \cC-pair and $P \subset \rd D.$ an irreducible component.
Setting $P^c \defn \Diff P{D-P}$, the pair $(P, P^c)$ is again a dlt \cC-pair, and the following holds:
For any integer $r \ge 1$, there is a sequence
\begin{equation} \label{res 0 seq}
0 \lto \Omegarl Xr{\rd D. - P} \lto \Omegarl Xr{\rd D.} \xrightarrow{\;\res_P\;} \Omegarl P{r-1}{\rd P^c.} \lto 0
\end{equation}
which is exact on $X$ off a codimension three subset and on $\snc{(X, D)}$ agrees with the usual residue sequence.
Its restriction to $P$ induces a sequence
\begin{equation} \label{res P 0 seq}
0 \lto \Omegarl Pr{\rd P^c.} \lto \Omegarl Xr{\rd D.} \big|_P \ddual \xrightarrow{\;\res_P^P\;} \Omegarl P{r-1}{\rd P^c.} \lto 0
\end{equation}
which is exact on $P$ off a codimension two subset.
More generally, for every $m \in \N$ there is a map
\[ \res_P^m \from \Sym_\cC^{[m]} \Omegal XrD \lto \Sym_\cC^{[m]} \Omegal P{r-1}{P^c}, \]
surjective off a codimension three subset of $X$, which generically coincides with the $m$-th symmetric power of the residue map. \qed
\end{thm}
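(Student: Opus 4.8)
The plan is to transcribe the proof of \cref{res} into the birational setting, replacing each surface‑specific step by one that keeps track of codimensions. Three ingredients must be assembled first. That $P^c = \Diff P{D-P}$ is an effective boundary making $(P, P^c)$ a dlt \cC-pair is pure adjunction theory and can be quoted from \cite[Ch.~4]{Kollar13} together with the \cC-refinements in \cite[Sec.~6]{Gra12}. Next one needs the higher‑dimensional analogue of \cref{dlt quotient}: away from a closed subset of codimension $\ge 3$, a dlt pair over \C is \'etale‑locally the quotient of an snc pair by a finite group, so that $\supp \rd D.$ is covered by Zariski‑open sets $U_\alpha$ carrying finite Galois covers $\gamma_\alpha \from V_\alpha \to U_\alpha$ that are quasi‑\'etale, \'etale over the generic point of each component of $\rd D.$, and such that $(V_\alpha, \gamma_\alpha^* D)$ is snc; this is contained in \cite[Sec.~9]{GKKP11} and \cite[Sec.~6]{Gra12}. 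Finally, since the characteristic is zero the orders of the groups $\Gal(\gamma_\alpha)$ are invertible, so the functors $(\gamma_\alpha)_*(-)^{\Gal(\gamma_\alpha)}$ are exact and the tameness subtleties of \cref{tamely dlt} never arise.

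With these in hand, I would build the maps $\res_P^m$ exactly as in Step~1 of the proof of \cref{res}: push forward the $m$-th \cC-symmetric power of the snc residue to obtain a map out of $\Sym_\cC^{[m]} \Omegal XrD$ into the sheaf of $(r-1)$-forms on $P$ with arbitrarily high order poles along $\supp \rp P^c.$, and then verify that it factors through $\Sym_\cC^{[m]} \Omegal P{r-1}{P^c}$ by the pullback criterion. Concretely, a local section $\wt\sigma$ of the target lies in the reflexive \cC-differentials precisely when $\gamma_\alpha^* \wt\sigma$ does, and the latter is checked by comparing $\gamma_\alpha^* \wt\sigma$ with the snc residue of $\gamma_\alpha^* \sigma$ on $(V_\alpha, \gamma_\alpha^* D)$: these two agree over a dense open subset of $\gamma_\alpha^* P$ because the ordinary residue commutes with \'etale pullback and $\gamma_\alpha$ is \'etale over the generic point of $P$, hence they agree everywhere. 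Surjectivity of $\res_P^m$ off a codimension three subset then follows from surjectivity in the snc case together with the exactness of $(\gamma_\alpha)_*(-)^{\Gal(\gamma_\alpha)}$, mirroring Step~2 of the proof of \cref{res}.

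For sequence~\labelcref{res 0 seq} I would specialize to $m = 1$, so that the middle term becomes $\Omegarl Xr{\rd D.}$ as in the setup of \cref{B}: the kernel of $\res_P$ is reflexive off codimension three by \cite[Cor.~1.5]{Hartshorne80}, it agrees with $\Omegarl Xr{\rd D. - P}$ on $\snc{(X,D)}$ by the classical residue sequence, and hence the two sheaves agree on the big locus where both are reflexive, which gives the asserted exactness. Sequence~\labelcref{res P 0 seq} would then be produced by the Snake Lemma applied to precisely the diagram used in Step~4 of the proof of \cref{res}, whose two rows are the restriction sequence (the \cC-analogue of~\labelcref{restr seq}, available off codimension three from \cite[Sec.~11]{GKKP11} and \cite[Sec.~6]{Gra12}) and the sequence
\[ 0 \lto \Omegarl Xr{\rd D.}(-P) \ddual \lto \Omegarl Xr{\rd D.} \lto \Omegarl Xr{\rd D.} \big|_P \ddual \lto 0, \]
and whose middle column is~\labelcref{res 0 seq} itself.

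The main obstacle will be not any single step but the codimension bookkeeping that pervades all of them: one must ensure that the locus where the quotient description of $(X,D)$ breaks down, the locus where reflexive hulls and $\Sym_\cC^{[m]}$ are not yet formed fibrewise, and the torsion introduced by restricting to $P$ all lie inside a subset of codimension $\ge 3$ in $X$ (respectively $\ge 2$ in $P$), so that the asserted exactness statements are not vacuous. A second, more formal difficulty is checking that the \cC-symmetric power functors of \cite[Sec.~5]{Gra12} are compatible with the pullback‑then‑take‑invariants operation used above; this is where the orbifold structure of the pair, rather than merely its reduced boundary, genuinely intervenes, and it is the one ingredient with no exact counterpart in the surface argument of \cref{res dlt}. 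Everything else is a faithful transcription of the surface case.
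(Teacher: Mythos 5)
Your proposal is correct and follows essentially the same route as the paper, which deliberately gives no proof of \cref{res 0}: it states that the results are to a large extent already contained in \cite[Sec.~11]{GKKP11} and \cite[Sec.~6]{Gra12}, and that the missing part (notably the sequence on $P$ obtained via the Snake Lemma diagram) is proved by the argument analogous to \cref{res dlt} --- which is exactly the transcription, with the same references and codimension bookkeeping, that you carry out.
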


\begin{thm}[Restriction sequence] \label{restr 0}
Notation as above.
Then there is a sequence
\begin{equation} \label{restr 0 seq}
0 \lto \Omegarl Xr{\rd D.}(-P) \ddual \lto \Omegarl Xr{\rd D. - P} \xrightarrow{\;\restr_P\;} \Omegarl Pr{\rd P^c.} \lto 0,
\end{equation}
exact off a codimension three subset, which on $\snc{(X, D)}$ agrees with the usual restriction sequence.
More generally, for every $m \in \N$ there is a map
\[ \restr_P^m \from \Sym_\cC^{[m]} \Omegal Xr{D-P} \lto \Sym_\cC^{[m]} \Omegal Pr{P^c} \]
which is surjective in codimension two and generically coincides with the $m$-th symmetric power of the restriction map. \qed
\end{thm}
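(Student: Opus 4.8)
The plan is to transcribe, \emph{mutatis mutandis}, the proof of the surface restriction sequence \cref{restr} given in \cref{restr proof} (which in turn mirrors the treatment of residues in \cref{res dlt}), with two modifications: everything is phrased for $\cC$-pairs and their reflexive $\cC$-symmetric powers $\Sym_\cC^{[\bullet]}$ rather than for a single reduced pair, and every assertion of exactness is claimed only \emph{off a closed subset $Z \subset X$ with $\codim X Z \ge 3$}. The geometric input that replaces \cref{dlt quotient} is the local structure of dlt $\cC$-pairs: one may choose such a $Z$ and finitely many Zariski-open subsets $U_\alpha$ covering $\supp \rd D. \setminus Z$, each equipped with a finite quasi-\'etale Galois cover $\gamma_\alpha \from V_\alpha \to U_\alpha$ such that $(V_\alpha, \gamma_\alpha^* D)$ is snc. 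This is the codimension-two analogue of \cref{dlt quotient}; for it we refer to \cite{GKKP11} (cf.\ also \cite[Sec.~6]{Gra12}). Since we are in characteristic zero, $|\Gal(\gamma_\alpha)|$ is automatically invertible, so no tameness hypothesis intervenes, and by the adjunction part of \cref{res 0} the pair $(P, P^c)$ is again a dlt $\cC$-pair, so the same apparatus is available on $P$.

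To construct the maps $\restr_P^m$, I would push forward the $m$-th $\cC$-symmetric power of the ordinary restriction map on $\snc{(X,D)}$. This produces a map from $\Sym_\cC^{[m]} \Omegal Xr{D-P}$ to the sheaf of rational $\cC$-symmetric $r$-forms on $P$ with poles along $\supp \rd P^c.$, and the point is that it factors through $\Sym_\cC^{[m]} \Omegal Pr{P^c}$. As in \cref{res dlt}, this is checked by the criterion (valid by quasi-\'etale descent for reflexive sheaves): a local section $\wt\sigma$ lies in $\Sym_\cC^{[m]} \Omegal Pr{P^c}$ if and only if its pullback $\gamma_\alpha^* \wt\sigma$ is a regular section of $\Sym_\cC^{[m]} \Omegal{P_{V_\alpha}}r{P_{V_\alpha}^c}$, where $P_{V_\alpha} \defn \gamma_\alpha^* P$ and $P_{V_\alpha}^c \defn \Diff{P_{V_\alpha}}{\gamma_\alpha^* D - P_{V_\alpha}}$. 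Since the restriction map commutes with \'etale pullback and $\gamma_\alpha$ is \'etale over the generic point of $P$, the section $\gamma_\alpha^* \wt\sigma$ agrees on a dense open subset of $P_{V_\alpha}$ with the snc restriction of $\gamma_\alpha^* \sigma$, hence everywhere, and is therefore regular. Surjectivity of $\restr_P^m$ off $Z$ then follows exactly as in \cref{res dlt}: locally the map is the $\Gal(\gamma_\alpha)$-invariant part of the surjective snc restriction map, and the functor $(\gamma_\alpha)_*(-)^{\Gal(\gamma_\alpha)}$ is exact because $|\Gal(\gamma_\alpha)|$ is invertible.

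With these maps available, set $\restr_P \defn \restr_P^1$. Its kernel is reflexive by \cite[Cor.~1.5]{Hartshorne80}: the image of $\restr_P$ is a subsheaf of $\Omegarl Pr{\rd P^c.}$, whose only associated prime (as an $\O X$-module) has height one. On the big open set $\snc{(X,D)}$ this kernel coincides with $\Omegarl Xr{\rd D.}(-P)\ddual$ by the classical restriction sequence \cite[2.3(a)]{EV92}, and two reflexive sheaves that agree on a big open subset are canonically identified on all of $X$ by the standard properties of reflexive sheaves recalled in \cref{sec notation}. Hence the first map in \labelcref{restr 0 seq} is injective with image exactly the kernel of $\restr_P$, so \labelcref{restr 0 seq} is exact at its first two terms everywhere; surjectivity at the third term holds after deleting $Z$, so the whole sequence is exact off codimension three. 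That it restricts to the usual restriction sequence on $\snc{(X,D)}$ is immediate from the construction.

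The main obstacle, and essentially the only step requiring genuine work rather than bookkeeping, is the local structure of dlt pairs in codimension two together with the attendant quasi-\'etale descent --- producing the covers $\gamma_\alpha$ and pinning down exactly which codimension-three locus must be removed --- and, inside that framework, the verification of the pullback criterion above by an explicit local computation that correctly tracks the fractional data (the $\cC$-structure, $\Sym_\cC$, round-downs), in the spirit of the computations in \cref{restr proof} and the proof of \cref{compute diff}. Granting these, the remainder is a routine adaptation of \cref{res dlt}.
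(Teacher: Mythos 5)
Your proposal is correct and follows essentially the route the paper intends: the paper gives no proof of \cref{restr 0}, stating only that the result is largely contained in \cite[Sec.~11]{GKKP11} and \cite[Sec.~6]{Gra12} and that the missing part is handled by arguing as in \cref{res dlt}, which is precisely the adaptation you carry out (codimension-two quotient structure of dlt pairs replacing \cref{dlt quotient}, pushforward of the symmetric restriction maps from the snc locus, Galois-invariant descent, and reflexivity of the kernel, with exactness only off a codimension-three set). Like the paper, you defer the $\cC$-differential bookkeeping in the pullback criterion to the cited references and to computations analogous to \cref{compute diff}, which is consistent with the level of detail the paper itself provides.
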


\begin{proof}[Proof sketch of Theorems~\labelcref{res 0} and~\labelcref{restr 0}]
Sequences~\labelcref{res 0 seq} and~\labelcref{res P 0 seq} along with the respective properties are in~\cite[Thm.~11.7]{GKKP11}.
The existence of $\res_P^m$ is shown in~\cite[Thm.~6.9(i)]{Gra12}.
Likewise, the maps $\restr_P^m$ are constructed in~\cite[Thm.~6.9(ii)]{Gra12}.
What is missing is the following:
\begin{itemize}
\item the existence of sequence~\labelcref{restr 0 seq}, and
\item the surjectivity properties of $\res_P^m$ and $\restr_P^m$.
\end{itemize}
For the first item, the argument is similar to Step~3 in the proof of \cref{res}, i.e.~we reduce to the snc case by a reflexivity argument.
For the second item (which is in fact not used in this paper), we resort to Step~2 in the above proof, but instead of \cref{dlt quotient} we use~\cite[Prop.~6.12]{Gra12}.
\end{proof}

\subsection{Lifting along a non-positive map}

The analog of \cref{lift elem} is as follows.

\begin{thm}[Lifting forms] \label{lift elem 0}
Assume \cref B$_n$.
Let $g \from Y \to X$ be a proper birational map of normal varieties of dimension at most $n + 1$, with $E = \Exc(g)$ the reduced divisorial part of the exceptional locus.
Furthermore let $D$ be an effective divisor on $X$, and set $D_Y \defn g\inv_* D + E$.
Assume both of the following:
\begin{enumerate}
\item\label{lift elem 0.1} The pair $(Y, D_Y)$ is dlt and \Q-factorial.
\item\label{lift elem 0.2} For any irreducible component $P \subset E$, setting $P^c \defn \Diff P{D_Y - P}$, we have that $K_P + P^c$ is not $g\big|_P$-big.
\end{enumerate}
Then for any integer $r \ge 1$, the natural map
\[ g_* \Omegarl Yr{\rd D_Y.} \xhookrightarrow{\quad} \Omegarl Xr{\rd D.} \]
is an isomorphism.
\end{thm}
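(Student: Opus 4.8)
The plan is to imitate the proof of \cref{lift elem}, but now keeping track of the degree of differential forms, which forces the use of the induction hypothesis \cref B$_n$. So let $\sigma \in \HH0.X.\Omegarl Xr{\rd D.}. \setminus \set0$ be an arbitrary reflexive logarithmic $r$-form, pull it back to a rational section $g^*\sigma$ of $\Omegarl Yr{\rd D_Y.}$, and choose an effective $g$-exceptional divisor $G$ with $g^*\sigma \in \HH0.Y.\Omegarl Yr{\rd D_Y.}(G)\ddual.$. As before I want to show that whenever $G \ne 0$, one may replace $G$ by $G - P$ for a suitable component $P \subset \supp G \subset E$; iterating finitely often yields $G = 0$ and thus $g^*\sigma$ is regular. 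By the Negativity Lemma on a resolution of $Y$, pick $P$ with $G \cdot P < 0$. Twisting \labelcref{rueckert} by $\O Y(-G)$ and reflexivizing gives a nonzero, hence injective, map $i \from \O Y(-G) \to \Omegarl Yr{\rd D_Y.}$.

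The first reduction uses the residue sequence \labelcref{res 0 seq} on the dlt \cC-pair $(Y, D_Y)$: I claim $\res_P \circ i = 0$, so that $i$ factors through $\Omegarl Yr{\rd D_Y. - P}$. To see this, raise $i$ to its $m$-th reflexive \cC-symmetric power for $m$ sufficiently divisible that $mG$ is Cartier, and compose with $\res_P^m$ from \cref{res 0}; this is the $m$-th symmetric power of $\res_P \circ i$, and lands in $\Sym_\cC^{[m]}\Omegarl P{r-1}{\rd P^c.}$. Its source is $\O Y(-mG)$, which is supported nowhere, but more to the point the composite is determined in codimension one, so it vanishes iff its restriction to $P$ does. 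That restriction is a map $\O P(-mG) \to \Sym_\cC^{[m]}\Omegarl P{r-1}{\rd P^c.}\big|_P$, i.e.\ a sheaf inclusion exhibiting $\O P(mG)\big|_P$ — a line bundle of negative degree as a quotient data, but really one must argue via \cref B$_n$: restricting to $P$ and saturating, a nonzero such map would produce a rank one reflexive subsheaf of $\Sym_\cC^{[m]}\Omegal P{r-1}{P^c}$ of \cC-Kodaira dimension governed by $-mG\big|_P$, which is $g\big|_P$-ample on the fibers, contradicting \cref B$_n$ applied to the general (projective) fiber of $P \to g(P)$ once $\dim P \le n$. The second reduction is entirely parallel, replacing the residue sequence by the restriction sequence \labelcref{restr 0 seq}: one shows $\restr_P \circ j = 0$ where $j$ is the factored map, again by passing to $m$-th \cC-symmetric powers, composing with $\restr_P^m$ from \cref{restr 0}, restricting to $P$, and invoking hypothesis \labelcref{lift elem 0.2} — that $K_P + P^c$ is not $g\big|_P$-big — together with \cref B$_n$ on the fibers to rule out a nonzero map. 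The existence of the resulting factorization $\iota \from \O Y(-G) \to \Omegarl Yr{\rd D_Y.}(-P)\ddual$ is equivalent to $g^*\sigma \in \HH0.Y.\Omegarl Yr{\rd D_Y.}(G-P)\ddual.$, completing the inductive step.

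The main obstacle — and the only place the argument genuinely diverges from \cref{lift elem} — is the vanishing of the two restricted maps on $P$. In the surface case ($r = 1$, $\dim P = 0$... no, $\dim P = 1$) the target on $P$ was literally $\O P$ or $\O P(mK_P + \rd mP^c.)$, and one concluded by an elementary degree count on a curve using $g$-nefness of $-(K_Y + D_Y)$. Here $P$ is positive-dimensional, the targets are honest symmetric powers of logarithmic \cC-differentials rather than structure sheaves, and the hypothesis is weakened from anticanonical nefness to non-bigness of $K_P + P^c$ along the contraction. So the degree count must be replaced by a Kodaira-dimension estimate, and this is exactly where \cref B$_n$ — Bogomolov–Sommese vanishing in dimension $\le n$ — is consumed: it bounds $\kappa_\cC$ of any rank one reflexive subsheaf of $\Sym_\cC^{[m]}\Omegal P{r-1}{P^c}$, which is what one gets by saturating the restricted map, and this bound is incompatible with the subsheaf being a large negative twist of $\O P$ coming from $-mG\big|_P$ together with the non-bigness input on the boundary twist. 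One must also take a small amount of care that everything is happening in the right codimension (the sequences in \cref{res 0} and \cref{restr 0} are only exact off codimension three), that the saturation of a torsion-free quotient is reflexive by \cite[Cor.~1.5]{Hartshorne80} exactly as in \cref{AB 1}, and that $(P, P^c)$ is again a dlt \cC-pair so that \cref B$_n$ applies to it — but all of that is routine. Piecing the elementary steps $\phi_i$ and $f$ together afterwards, exactly as in the proof of \cref{tame lext}, is immediate once the single-step statement \cref{lift elem 0} is in hand, so I would not belabor it.
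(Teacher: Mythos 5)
Your skeleton matches the paper's---iteratively shrink the pole divisor $G$ via the residue and then the restriction sequence, replacing the surface degree count by a Kodaira-dimension estimate on fibres---but two load-bearing steps are not supplied correctly. First, the choice of $P$: ``pick $P$ with $G \cdot P < 0$'' is the surface argument and has no meaning once $\dim Y > 2$; what is actually needed is \cref{neg lemma} (the Big Negativity Lemma, \cite[Prop.~4.1]{Gra12}), which produces a component $P \subset G$ such that $-G\big|_P$ is $g\big|_P$-\emph{big}. Your later assertion that $-mG\big|_P$ is ``$g\big|_P$-ample on the fibers'' is unjustified, and false in general, for a component chosen by mere numerical negativity; yet it is exactly the relative bigness, giving the big line bundle $\O{F}\big({-mG}\big|_F\big)$ on a general fibre $F$ of $g\big|_P$, that powers the contradiction. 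Without that input the Kodaira-dimension argument never starts.

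Second, the way you invoke \cref{B}$_n$ does not work as written. \cref{B}$_n$ concerns rank-one subsheaves of $\Sym_\cC^{[1]}$ of log differentials on a \emph{projective} pair, so it applies neither to $(P, P^c)$ directly ($P$ is only proper over $g(P)$) nor to subsheaves of $\Sym_\cC^{[m]}$ with $m>1$. The correct mechanism is to saturate the image of $\res_P$ at level $m=1$ to obtain a Weil divisorial sheaf $\sB \subset \Sym_\cC^{[1]}\Omegal{P}{r-1}{P^c}$, restrict to the general fibre $F$, and use \cite[Prop.~7.3]{Gra12} to embed the restricted powers compatibly into $\Sym_\cC^{[k]}\Omegal{F}{q}{F^c}$ for \emph{some} $0 \le q \le r-1$; the higher maps $\res_P^m$ serve only to show the resulting rank-one sheaf is $\cC$-big. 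Crucially, the case $q=\dim F$ is not excluded by \cref{B}$_n$ at all: there the conclusion is that $K_F + F^c = (K_P+P^c)\big|_F$ is big, and it is hypothesis~\labelcref{lift elem 0.2} that produces the contradiction. Your sketch consumes \labelcref{lift elem 0.2} only in the restriction step and claims the residue step follows from \cref{B}$_n$ alone; that fails precisely in this top-degree case (e.g.\ when $r-1 \ge \dim F$). Two smaller points: the statement allows a non-reduced effective $D$, so one should first replace $D$ by $\rd D.$ and check via \cite[(4.2.10)]{Kollar13} that dlt-ness and condition~\labelcref{lift elem 0.2} survive the rounding; and the closing remark about piecing together the maps $\phi_i$ and $f$ belongs to the deduction of \cref{A}$_{n+1}$, not to the proof of this theorem.
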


Recall that if $f$ is any map, a divisor on the source of $f$ is called $f$-big if its restriction to a \emph{general} fibre of $f$ is big.

The proof relies crucially on the following Negativity Lemma, which should be compared to the usual one~\cite[Lemma~3.6.2(1)]{BCHM10}.
Indeed our version is somewhat stronger, as it does not merely make a numerical statement, but actually produces sections of a suitable line bundle.

\begin{prp}[Big Negativity Lemma, cf.~\protect{\cite[Prop.~4.1]{Gra12}}] \label{neg lemma}
Let $\pi \from Y \to X$ be a proper birational map between normal quasi-projective varieties.
Then for any nonzero effective $\pi$-exceptional \Q-Cartier divisor $E$, there is an irreducible component $P \subset E$ such that $-E\big|_P$ is $\pi\big|_P$-big. \qed
\end{prp}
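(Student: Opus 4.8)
The plan is to argue by induction on $n \defn \dim X$. After the standard reductions — passing to a resolution of $Y$ wherever it is needed (which is harmless over $\C$, since $\rho$-exceptional curves contribute $0$ to $\rho^{*}E\cdot(-)$, so a good component found upstairs can be traced back to a component of $E$), and shrinking $X$, since the conclusion is local on $X$ — there are two sorts of components of $E$ to keep track of: those whose image in $X$ is positive-dimensional and those contracted to a point. In the base case $n=2$ every component of $E$ is a $\pi$-exceptional curve on the (smooth, after resolving) surface $Y$, the intersection matrix of all such curves is negative definite, hence $E^{2}<0$ and therefore $E\cdot P<0$ for some component $P$; then $-E|_{P}$ has positive degree on the projective curve $P$, so it is ample, a fortiori big, and the general fibre of $\pi|_{P}$ is $P$ itself.

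For the inductive step I would first dispose of the case where some component $E_{i}$ has $\dim\pi(E_{i})\ge 1$. Cutting $X$ by a general very ample hyperplane $H$, one gets (passing to a normalization if necessary) normal varieties $X'\defn H$ and $Y'\defn \pi\inv(H)$, a proper birational map $\pi'\from Y'\to X'$, and a nonzero effective $\Q$-Cartier $\pi'$-exceptional divisor $E'\defn E|_{Y'}$ whose components are exactly the $E_{j}\cap Y'$ for those $j$ with $\dim\pi(E_{j})\ge 1$. The inductive hypothesis yields such a component, say $E_{j}\cap Y'$, with $-E'|_{E_{j}\cap Y'}$ big over $\pi(E_{j})\cap H$. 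The decisive point is that a general fibre of $(E_{j}\cap Y')\to\pi(E_{j})\cap H$ is literally a general fibre $F$ of $E_{j}\to\pi(E_{j})$, and that $-E'|_{E_{j}\cap Y'}$ and $-E|_{E_{j}}$ restrict to the same divisor on $F$; hence $-E|_{E_{j}}$ is $\pi|_{E_{j}}$-big.

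It remains to treat the case where every component of $E$ is contracted to a point. Choosing such a point $x_{0}$ and localising $X$ at $x_{0}$ — which does not affect the conclusion, as the components over $x_{0}$ are disjoint in $Y$ from those over other points — I reduce to $X=\Spec R$ with $R$ local normal of dimension $n$, $Y$ smooth, and $\supp E\subseteq\pi\inv(x_{0})$; here ``$\pi|_{P}$-big'' just means ``big'' on the projective variety $P$. Two ingredients finish this. First, the local volume $\lim_{m}\tfrac{n!}{m^{n}}\,\ell_{R}\!\big(R/\pi_{*}\O Y(-mE)\big)$ is strictly positive, because $\pi_{*}\O Y(-mE)$ is contained in the valuation ideal $\{\,f:\ord_{E_{1}}(f)\ge m a_{1}\,\}$ of a divisorial valuation centred at $\frm$, and such ideals are known to have positive volume. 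Second, filtering the structure sheaf of the subscheme $mE$ (for $m$ sufficiently divisible) by peeling off the prime components one at a time gives
\[ \dim_{\kappa} H^{0}\big(Y,\O{mE}\big)\;\le\;\sum_{i}\sum_{d\ge 0} h^{0}\big(E_{i},\,-E|_{E_{i}}-d\,E_{i}|_{E_{i}}-(\text{effective})\big), \]
while the first ingredient forces the left-hand side to grow like $m^{n}$; a pigeonhole argument then singles out one component $P$ along which these twisted line bundles grow maximally, i.e.\ along which $-E|_{P}$ is big.

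The step I expect to be the genuine obstacle is the second half of the last paragraph: the bookkeeping must be done carefully enough that it is the volume of $-E|_{P}$ itself — with the correct coefficient of $P$ and with the contributions of the remaining components subtracted — that is forced to be positive, and not merely that of $-P|_{P}$. A cleaner route to this final step is to pass to the relative Zariski decomposition of $-E$ over $X$ and localise along a component of its negative part; this is in essence the argument of \cite[Prop.~4.1]{Gra12}, which I would follow. The subsidiary points — normality of the hyperplane sections $X'$, $Y'$, and the descent through the resolution of $Y$ — are routine (Bertini-type arguments, together with the fact that one can always arrange the good component found upstairs to be a strict transform) and leave the logical skeleton above intact.
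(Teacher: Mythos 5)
The paper itself offers no proof of this Proposition: it is stated with a terminal QED and quoted from \cite[Prop.~4.1]{Gra12}, so your argument has to stand on its own, and as written it does not. Your reductions are fine in outline (the surface base case via negative definiteness, and the cut by general hyperplane sections of $X$ to reach the situation where $\supp E$ maps to a point), but the point case is precisely where the content of the statement lies, and there you stop short. The filtration-and-pigeonhole bookkeeping that you yourself single out as ``the genuine obstacle'' is not a technicality: peeling off prime components only yields growth of $h^0$ of twists by partial sums $-D_j\big|_{E_i}$, and you must (a) arrange the $D_j$ to be comparable to multiples of $E$, (b) work on a model where the individual components are ($\Q$-)Cartier --- on the normal $Y$ they need not be, and after resolving $Y$ you must additionally rule out that the pigeonhole selects a divisor extracted by the resolution rather than a strict transform of a component of $E$, which you assert but do not prove --- and (c) make the positivity land on $-E\big|_P$ itself. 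None of this is carried out. The fallback you offer does not close the gap either: a relative Zariski decomposition of $-E$ over $X$ does not exist in general in dimension $\ge 3$ (Nakayama's examples), and it is in any case not the argument of \cite[Prop.~4.1]{Gra12}.

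For comparison, a complete argument is much shorter and needs neither induction, nor local volumes, nor Izumi-type estimates; it is essentially what \cite{Gra12} does. Since $Y$ is quasi-projective and $\pi$ proper, $\pi$ is projective, hence $Y \isom \mathrm{Bl}_{\sI} X$ for some coherent ideal sheaf $\sI \subset \O X$ by \cite[Ch.~II, Thm.~7.17]{Har77}; thus $\sI \cdot \O Y = \O Y(-G)$ with $G$ effective Cartier and $-G$ $\pi$-ample. Every component $E_i$ of $E$ lies in $\supp G$: otherwise $-G$ would restrict, on a general fibre of $E_i \to \pi(E_i)$ (which is positive-dimensional because $E_i$ is $\pi$-exceptional), to a class that is simultaneously ample and anti-effective, which is absurd. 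Now set $\lambda \defn \min_i \mult_{E_i}(G)/\mult_{E_i}(E) > 0$, so that $R \defn G - \lambda E$ is an effective $\Q$-Cartier divisor and some component $P \subset E$ achieving the minimum is not contained in $\supp R$. Restricting $-\lambda E = -G + R$ to a general fibre of $P \to \pi(P)$ exhibits it as (ample) $+$ (effective), hence big; so $-E\big|_P$ is $\pi\big|_P$-big. If you prefer to keep your volume-theoretic route, the missing bookkeeping in the point case is exactly the part you would have to write out in full.
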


\begin{proof}[Proof of \cref{lift elem 0}]
We first contend that we may replace $D$ by $\rd D.$ and thus assume that $D$ is reduced.
To this end, note that $\rd D_Y. = g\inv_* \rd D. + E$, so the conclusion we are aiming at only depends on $\rd D.$.
Also, as $Y$ is \Q-factorial, the pair $(Y, \rd D_Y.)$ remains dlt.
Finally, for any component $P \subset E$ we have
\begin{align*}
\Diff P{\rd D_Y. - P} & = \Diff P0 + (\rd D_Y. - P)\big|_P && \text{\cite[(4.2.10)]{Kollar13}} \\
& = \Diff P0 + (D_Y - P)\big|_P - \{ D_Y \}\big|_P \\
& = P^c - \{ D_Y \}\big|_P && \text{\cite[(4.2.10)]{Kollar13}}
\end{align*}
where $\{ D_Y \} \defn D_Y - \rd D_Y. \ge 0$ is the fractional part of $D_Y$.
So Condition~\labelcref{lift elem 0.2} is likewise preserved.

Now let $\sigma \in \HH0.X.\Omegarl XrD. \setminus \set0$ be an arbitrary nonzero logarithmic $r$-form, and pick an effective $g$-exceptional divisor $G$ such that
\begin{equation} \label{rueckert 0}
g^* \sigma \in \HH0.Y.\Omegarl Yr{D_Y}(G) \ddual..
\end{equation}
Equivalently, there is an injective map $i \from \O Y(-G) \to \Omegarl Yr{D_Y}$.
We may assume that $G \ne 0$, in which case by \cref{neg lemma} there is a component $P \subset G$ such that $-G\big|_P$ is $g\big|_P$-big.
Set $P^c \defn \Diff P{D_Y - P}$.
Let $F \subset P$ be a general fibre of $g\big|_P$, and set $F^c \defn P^c\big|_F$.
Then $F$ is normal and $(F, F^c)$ is again a dlt \cC-pair.
Also, define $\sA \subset \Sym_\cC^{[1]} \Omegal Yr{D_Y}$ to be the image of $i$.
Now consider the residue sequence~\labelcref{res 0 seq} along $P$:
\[ \xymatrix{
& & \sA \ar@{ ir->}[d] \ar@/^.5pc/[dr] \ar@/_.5pc/@{-->}[dl] \\
0 \ar[r] & \Omegarl Yr{D_Y - P} \ar[r] & \Omegarl Yr{D_Y} \ar^-{\res_P}[r] & \Omegarl P{r-1}{\rd P^c.} \ar[r] & 0.
} \]

\begin{clm} \label{factor res 0}
We have $\res_P(\sA) = 0$, and hence $\sA$ is contained in $\Omegarl Yr{D_Y - P}$ as indicated by the dashed arrow in the above diagram.
\end{clm}

In the following, note that the restriction of a reflexive sheaf on $P$ to the general fibre $F$ remains reflexive and hence in this case the double dual may be omitted.

\begin{proof}[Proof of \cref{factor res 0}]
Arguing by contradiction, let us assume that $\res_P(\sA) \ne 0$ and denote its saturation by $\sB \subset \Sym_\cC^{[1]} \Omegal P{r-1}{P^c}$, a Weil divisorial sheaf.
By~\cite[Prop.~7.3]{Gra12}, there are a number $0 \le q \le r - 1$ and embeddings
\[ \alpha_k \from \sC_k \defn (\Sym_\cC^{[k]} \sB) \big|_F \xhookrightarrow{\quad} \Sym_\cC^{[k]} \Omegal Fq{F^c} \]
for all $k$, satisfying the compatibility conditions that $\sC_k$ and $\sC_1^{[k]}$ generically agree as subsheaves of $\Sym_\cC^{[k]} \Omegal Fq{F^c}$.
We will show that $\sC \defn \sC_1$ is ``\cC-big'' in the sense that $\kappa_\cC(\sC) = \dim F$.
If $q < \dim F$, this contradicts \cref B$_n$.
If $q = \dim F$, it contradicts Assumption~\labelcref{lift elem 0.2}, which says that $K_F + F^c = (K_P + P^c)\big|_F$ is not big.

To this end, we claim that for any natural number $m$ there is an inclusion
\begin{equation} \label{1771}
\big( \O Y(-mG)\big|_P \ddual \big) \big|_F \subset \big( (\Sym_\cC^{[m]} \sA)\big|_P \ddual \big) \big|_F \subset \Sym_\cC^{[m]} \sC.
\end{equation}
The first inclusion holds because $i$ does not vanish along $P$ nor $F$ (otherwise we would necessarily have $\res_P(\sA) = 0$).
For the second one, the map $\res_P^m$ from \cref{res 0} gives an inclusion
\[ (\Sym_\cC^{[m]} \sA) \big|_P \ddual \xhookrightarrow{\quad} \Sym_\cC^{[m]} \Omegal P{r-1}{P^c} \]
which by~\labelcref{gen equal} factors via the saturated subsheaf $\Sym_\cC^{[m]} \sB$.
Restricting to~$F$, we see that the middle term of~\labelcref{1771} is contained in $\sC_m$.
But $\sC_m$, $\sC^{[m]}$ and $\Sym_\cC^{[m]} \sC$ all generically agree.
Hence $\sC_m \subset \Sym_\cC^{[m]} \sC$ by another application of~\labelcref{gen equal} and we obtain~\labelcref{1771}.

Now let $m$ be sufficiently divisible so that $mG$ is Cartier.
In this case, on the left-hand side of~\labelcref{1771} the double dual may be dropped and we simply get the big line bundle $\O F \big( {-mG}\big|_F \big)$.
As a consequence, also $\Sym_\cC^{[m]} \sC$ is big, establishing our claim that $\kappa_\cC(\sC) = \dim F$.
\end{proof}

We next consider the restriction sequence~\labelcref{restr 0 seq}:
\[ \xymatrix{
& & \sA \ar@{ ir->}[d] \ar@/^.5pc/[dr] \ar@/_.5pc/@{-->}[dl] \\
0 \ar[r] & \Omegarl Yr{D_Y}(-P) \ddual \ar[r] & \Omegarl Yr{D_Y - P} \ar^-{\restr_P}[r] & \Omegarl Pr{\rd P^c.} \ar[r] & 0.
} \]
The same line of argument as in the proof of \cref{factor res 0} then shows that we have $\restr_P(\sA) = 0$ and so $\sA$ is contained in $\Omegarl Yr{D_Y}(-P) \ddual$.
The details are omitted for their similarity.
The proof of \cref{lift elem 0} is now finished in exactly the same fashion as \cref{lift elem}: we have shown that $G$ can be replaced by $G - P$ in~\labelcref{rueckert 0}.
Hence after finitely many repetitions we arrive at $G = 0$.
\end{proof}

\subsection{Proof of ``\labelcref A$_n$ \imp\ \labelcref B$_n$''}

This implication is by far the easier of the two.
By~\cite{BCHM10}, the dlt pair $(X, D)$ admits a \Q-factorialization.
This is essentially a dlt modification in the special case of dlt pairs, cf.~\cref{sec fact res}.
For the proof, see~\cite[Thm.~3.1]{KK10} and~\cite[Thm.~9.2]{Gra12}.

As the Kodaira dimension is invariant under small morphisms, we may assume that~$\sA$ is \Q-Cartier.
Under this assumption, the proof of~\cite[Thm.~7.2]{GKKP11} shows how to deduce that $\kappa(\sA) \le r$ from \cref A$_n$ and the standard Bogomolov--Sommese vanishing theorem for snc pairs~\cite[Cor.~6.9]{EV92}.
The stronger statement that $\kappa_\cC(\sA) \le r$ can then be obtained from this by a branched covering trick as explained in~\cite[Sec.~7]{JK11}.
Compare also~\cite[Thm.~7.3]{GKKP11} (which erroneously contains the extra assumption that $\dim X \le 3$). \qed

\subsection{Proof of ``\labelcref B$_n$ \imp\ \labelcref A$_{n+1}$''}

Let $\pi \from Y \to X$ be a log resolution of $(X, D)$.
Then $\pi$ can be factored as $f \circ \phi_{r-1} \circ \cdots \circ \phi_0$ just as in \cref{fact res}, whose notation we adopt here.
The only difference is that some of the $\phi_i$ might be flips.
Also the proof is essentially the same, except that instead of~\cite{Tanaka18} we need to appeal to~\cite{BCHM10}.
Also, because we cannot in general run the MMP on a dlt pair, we have to use the perturbation trick from the proof of~\cite[Thm.~3.1]{KK10} to reduce the situation to the case of klt pairs.

We will lift forms along each step separately, just as in \cref{tame lext} but using \cref{lift elem 0} instead of \cref{lift elem}.
For this, we need to make sure Condition~\labelcref{lift elem 0.2} is satisfied.
As far as the map $f$ is concerned, this is quite clear: since $K_Z + \wt D_r = f^* (K_X + D)$, the restriction of $K_Z + \wt D_r$ to any fibre of $f$ is trivial and in particular not big.
But for any component $P \subset E$, we have $(K_Z + \wt D_r)\big|_P = K_P + P^c$ by adjunction and so~\labelcref{lift elem 0.2} is satisfied.

We now fix $0 \le i \le r - 1$ and turn to the map $\phi_i \from Y_i \to Y_{i+1}$.
If $\phi_i$ is a flip, then it is an isomorphism in codimension one and hence extension of forms from $Y_{i+1}$ to $Y_i$ is automatic by reflexivity.
We may therefore assume that $\phi_i$ is a divisorial contraction, with irreducible exceptional divisor $P$.
By construction, $K_{Y_i} + \wt D_i$ is $\phi_i$-anti-ample.
Also, $(K_{Y_i} + \wt D_i)\big|_P = K_P + P^c$ by adjunction.
Thus $K_P + P^c$ is $\phi_i\big|_P$-anti-ample and in particular~\labelcref{lift elem 0.2} is satisfied. \qed

\section{Sharpness of results} \label{sharp}

In this section we discuss some examples that show to what extent our main results are optimal.
First, we show that \cref{main lc} fails for the rational double point $E_8^0$ in characteristic $p \le 5$, using notation from~\cite{Artin77}.

\begin{exm}[No \lext in low characteristics] \label{E8 sharp}
Fix any field $k$ of characteristic $p = 2, 3 \text{ or } 5$.
Then for the (non-$F$-pure) $E_8^0$ singularity
\[ X = \set{f = z^2 + x^3 + y^5 = 0} \subset \A3_k, \]
the \lext does not hold.
More precisely, note that \kahler differentials on $X$ satisfy the relation
\[ \d f = 3x^2 \d x + 5y^4 \d y + 2z \d z = 0 \]
and hence we may consider the $1$-form
\[ \sigma \defn \begin{cases}
y^{-4} \d x = \phantom- x^{-2} \d y,   & p = 2, \\
z^{-1} \d y = - y^{-4} \d z, & p = 3, \\
z^{-1} \d x = - x^{-2} \d z, & p = 5.
\end{cases}
\]
As any two coordinate functions on $X$ vanish simultaneously only at the origin, $\sigma \in \HH0.\Reg X.\Omegap X1. = \HH0.X.\Omegar X1.$ is a reflexive differential form on $X$.
We blow up the origin of $\A3_k$ (and points lying over it) four times in a row, yielding a map $\phi \from \wt{\A3_k} \to \A3_k$.
In suitable coordinates on $\wt{\A3_k}$, this map is given by
\[ \phi(u, v, w) = (u^2 v^5, uv^3, u^2 v^7 w). \]
We compute
\[ \phi^*(f) = u^4 v^{14} w^2 + u^6 v^{15} + u^5 v^{15} = u^4 v^{14} \cdot \underbrace{\big( w^2 + uv(u + 1) \big)}_{\substack{\text{equation of strict} \\[.3ex] \text{transform $\wt X$ of $X$}}}. \]
We see that $\wt X$ can be parametrized rationally by the $(u, w)$-plane, namely by setting $v = - \frac{w^2}{u(u + 1)}$.
In this parametrization, for $p = 2$ the pullback of $\sigma$ is given by
\begin{align*}
\phi^*(\sigma) & = (uv^3)^{-4} \, \d (u^2 v^5) \\
& = u^{-2} v^{-8} \, \d v \\
& = \frac{u^6 (u + 1)^8}{w^{16}} \, \d \left( \frac{w^2}{u(u + 1)} \right) \\
& = \cdots \\
& = \frac{u^4 (u + 1)^6}{w^{14}} \, \d u.
\end{align*}
A similar calculation for the other characteristics gives
\[ \phi^*(\sigma) = \begin{cases}
\displaystyle \frac{u^2 (u + 1)^4}{w^9} \, \d u, & p = 3, \\[2.5ex]
\displaystyle \frac{u (u + 1)^2}{w^5} \, \d u, & p = 5.
\end{cases}
\]
This shows that the extension of $\sigma$ to $\wt X$ has worse than logarithmic poles along the exceptional divisor $\set{w = 0}$.
\end{exm}

Next, we show that in \cref{main reg}, the assumption on $\det(E_i \cdot E_j)$ not being divisible by $p$ really is necessary.

\begin{exm}[No \et in spite of \lext] \label{lext no et}
Let $k$ be a field of characteristic $p > 0$, and consider the $p$-th Veronese subring $R$ of $k[x, y]$, that is, $R = k[x^p, x^{p-1}y, \dots, y^p]$.
Then $X = \Spec R$ is a strongly $F$-regular surface, since $R$ is a direct summand of the regular ring $k[x, y]$.
In particular, $X$ is klt.
If $\pi \from Y \to X$ is the minimal resolution, then $E = \Exc(\pi)$ consists of a single smooth rational curve of self-intersection $-p$.
In particular, the assumptions of \cref{main reg} are not satisfied.
For later use, let us record the discrepancy $a = a(E, X)$ along $E$: by adjunction,
\[ -2 = (K_Y + E) \cdot E = \big( \pi^* K_X + (a + 1) E \big) \cdot E = - (a + 1) p \]
and hence $a = -1 + \frac2p$.

One can see by direct calculation that $X$ satisfies the \lext, but not the \et.
More precisely, $Y$ is covered by two open sets $U_0, U_1 \isom \A2_k$, where $U_i$ has coordinates $x_i, y_i$ and the coordinate change is given by $(x_1, y_1) = (x_0\inv, x_0^p y_0)$.
The exceptional curve $E$ is given by the equation $y_i = 0$ in the chart $U_i$.
Consider the form $\sigma \in \HH0.Y \setminus E.\Omegap Y1.$ given by $y_0\inv \d y_0$ on $U_0$ and by $y_1\inv \d y_1$ on $U_1$.
It obviously does not extend regularly over $E$, showing that the \et fails for $X$.
But $\sigma$ has only a logarithmic pole along $E$, and it generates the quotient $\factor{\Omegar X1}{\pi_* \Omegap Y1}$.
Thus the \lext does hold for $X$.
(Of course, this last fact also follows from \cref{main lc}, at least if $p \ge 7$.)
\end{exm}

An elaboration of the previous example shows that \cref{restr} fails for dlt pairs whose canonical divisor is not \zp-Cartier.

\begin{exm}[No restriction sequence for fiercely dlt pairs] \label{no zp no restr}
Using notation from \cref{lext no et}, let $D \subset X$ be the $\pi$-image of the curve $D_Y \subset U_0 \subset Y$ given by the equation $\set{x_0 = \text{const.}}$ (where the constant is arbitrary).
Then $D$ is a smooth curve passing through the singular point $x \in X$, and it is isomorphic to its strict transform $D_Y$.
\[ \xymatrix{
D_Y \ar@{ ir->}[rr] \ar_-\wr[d] & & Y \ar^-\pi[d] \\
D \ar@{ ir->}[rr] & & X
}
\]
We have $\pi^* D = D_Y + rE$, where $0 = \pi^* D \cdot E = 1 - rp$ and hence $r = 1/p$.
It follows that the discrepancy of $(X, D)$ along $E$ is
\[ a(E, X, D) = a(E, X) - r = - \frac{p-1}p. \]
This shows that the pair $(X, D)$ is plt.
On the other hand, $K_X + D$ cannot be \zp-Cartier, since otherwise $p$ would not appear in the denominator of $a(E, X, D)$ (written in lowest terms).
Hence $(X, D)$ is fiercely dlt.
In particular, we cannot apply \cref{compute diff} to compute $\Diff D0$.
But~\cite[Thm.~3.36]{Kollar13} tells us that we still have $D^c \defn \Diff D0 = \big( 1 - \frac1p \big) \cdot [x]$.
So, if \cref{restr} held, we would have a restriction map as in~\labelcref{restr seq}
\[ \restr_D \from \Omegar X1 \to \Omegal D1{\rd D^c.} = \Omegap D1. \]
Consider however the form $\sigma$ from the previous example, viewed as a section of $\Omegar X1$.
As $D_Y \isom D$, we can compute $\restr_D(\sigma)$ on $Y$.
We have already seen that $\sigma$ acquires a logarithmic pole along $E$.
So $\sigma\big|_{D_Y}$ has a logarithmic (i.e.~simple) pole at the unique point in the intersection $D_Y \cap E$, which under $\pi$ maps to $x$.
Summing up, this means that we do have a restriction map
\[ \restr_D \from \Omegar X1 \to \Omegal D1x, \]
but it does not factor via $\Omegap D1$.
Looking at higher powers $\sigma^{[m]}$, we see that also the other maps $\restr_D^m$ from \cref{restr} do not exist in this example.
\end{exm}

\begin{exm}[No \lext for singularities reduced from characteristic zero]
Finally, we would like to remark that if we start with a log canonical singularity in characteristic zero and then reduce it modulo some small prime $p$, the resulting singularity may not satisfy the \lext even if it remains log canonical.
Indeed, \cref{E8 sharp} furnishes a counterexample since $z^2 + x^3 + y^5 = 0$ defines an $E_8$ rational double point also in characteristic zero.
\end{exm}

\section{Counterexamples in higher dimensions} \label{ext higher-dim p}

In this final section, we will prove \cref{lext p fail}.
As a starting point, in~\cite{Kol95nh} Koll\'ar has given a fairly explicit method for constructing counterexamples to Bogo\-molov--Sommese vanishing over fields of positive characteristic.
We will recall Koll\'ar's construction in \cref{kollar construction} below, both for the benefit of the reader and in order to bring the result in the precise form we need.
It turns out that in the examples, the line bundle in question is not just big, but even ample.
Thus also Nakano vanishing is violated:

\begin{prp}[Failure of Nakano vanishing] \label{nakano fail}
Fix an algebraically closed field $k$ of characteristic~$p > 0$, and an integer $n \ge 2$.
\begin{enumerate}
\item\label{Nakano Fano} If $n \ge 2p - 2$, then there exists an $n$-dimensional Fano variety $Y/k$ with only isolated canonical hypersurface singularities such that
\[ \HH0.Y.\Omegar Y{n - 1} \tensor \can Y. \ne 0. \]
\item\label{Nakano Fano II} If $n \ge 3p - 2$, then there exists $Y$ as above, but such that $\can Y\inv$ admits a square root $L$ (i.e.~an ample line bundle with $L^2 \isom \can Y\inv$) and
\[ \HH0.Y.\Omegar Y{n - 1} \tensor L\inv. \ne 0. \]
\item\label{Nakano CY} If $n \ge p - 2$, then there exists an $n$-dimensional variety $Y/k$ with only isolated canonical hypersurface singularities, satisfying $\can Y \isom \O Y$ and
\[ \HH0.Y.\Omegar Y{n - 1} \tensor L\inv. \ne 0 \]
for some ample line bundle $L$ on $Y$.
\end{enumerate}
In all cases, $Y$ actually has $F$-pure singularities.
If $n \ge 3$, then $Y$ is even terminal and strongly $F$-regular.
\end{prp}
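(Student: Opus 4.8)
The plan is to obtain all three items from Koll\'ar's construction, recalled as \cref{kollar construction} below. That construction takes a prime $p$ together with a choice of auxiliary integers and produces a normal hypersurface $Y \subset \PP{n+1}$ of an explicit degree $d$ (with $n$ large compared to $p$), having at worst isolated hypersurface singularities, together with a distinguished nonzero global section
\[ \omega \in \HH0.Y.\Omegar Y{n-1} \tensor \O Y(-c). \]
for a positive integer $c$ that the recipe expresses in terms of $d$ and $p$. The section is manufactured on $\Reg Y$, which is enough because $\Omegar Y{n-1}$ is reflexive; the underlying mechanism is that in characteristic $p$ the exterior derivative annihilates $p$-th powers, so a form whose apparent poles lie along a divisor cut out by $p$-th powers is in fact regular. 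This ``free factor of $p$'' is precisely what produces the numerical thresholds below.

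Granting this input, I would establish each item by bookkeeping on the adjunction isomorphism $\can Y \isom \O Y(d - n - 2)$. For~\labelcref{Nakano Fano} one picks the parameters so that $c = n + 2 - d$, i.e.\ $\O Y(-c) \isom \can Y$, and $d \le n + 1$, so that $\can Y\inv$ is ample and $Y$ is Fano; Koll\'ar's relation between $c$, $d$ and $p$ then translates into the constraint $n \ge 2p - 2$. For~\labelcref{Nakano Fano II} one instead arranges $d = n$, so that $\can Y\inv \isom \O Y(2)$ has the square root $L \defn \O Y(1)$, together with $c = 1$, so that $\omega$ becomes a section of $\Omegar Y{n-1} \tensor L\inv$; the numerical constraint sharpens to $n \ge 3p - 2$. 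For~\labelcref{Nakano CY} one takes $d = n + 2$, so that $\can Y \isom \O Y$, and the only further requirement is that $\O Y(c)$ be ample, which the construction accommodates as soon as $n \ge p - 2$. Reading off the sign of $d - n - 2$ in the three regimes confirms that $\O Y(-c)$ is, respectively, $\can Y$, a square root of $\can Y\inv$, and an anti-ample bundle, exactly as asserted.

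It remains to control the singularities. They are isolated hypersurface singularities by construction, so it suffices to analyse a single germ $\germ{x}{Y}$, whose local equation can be read off from Koll\'ar's recipe. I would deduce $F$-purity from Fedder's criterion, checking that the $(p-1)$-st power of the local equation contains the relevant ``diagonal'' monomial --- here one uses that the exponents occurring are prime to $p$. The log discrepancy at $x$ is then computed by the usual (weighted) blow-up formula from the degree and weights of that equation; the outcome should be that $\germ{x}{Y}$ is canonical for every $n \ge 2$, and that for $n \ge 3$ the discrepancy is strictly positive, so that $Y$ is terminal. Strong $F$-regularity for $n \ge 3$ then follows from $F$-purity combined with $\Q$-Gorenstein terminality, or alternatively from a direct Fedder-type estimate.

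The step I expect to be the main obstacle is this last one, because of the tension it contains: one must keep the singularities mild --- isolated, canonical, and even terminal and strongly $F$-regular once $n \ge 3$ --- without weakening the cohomological pathology, and the two demands pull against each other. It is exactly the thresholds $n \ge 2p - 2$, $n \ge 3p - 2$ and $n \ge p - 2$, obtained by balancing the lower bound on $d$ forced by Koll\'ar's construction against the value of $d - n - 2$ dictated by the desired canonical class, that make both requirements compatible at once.
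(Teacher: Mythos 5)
Your overall strategy (Koll\'ar's characteristic-$p$ construction, adjunction bookkeeping, Fedder's criterion, discrepancy computation) is the same as the paper's, but you mis-state the construction at the decisive point, and the numerology you build on that misstatement is never actually derived. Koll\'ar's construction, as recalled in \cref{kollar construction}, does \emph{not} produce a hypersurface $Y \subset \PP{n+1}$ of degree $d$: it produces the inseparable $p$-th root cover $\pi \from Y = X[{\sqrt[p]s}] \to X$ of a smooth hypersurface $X \subset \PP{n+1}$, with $s$ a general section of $L^p$ for a chosen line bundle $L$ on $X$. (The statement's ``hypersurface singularities'' are local; $Y$ itself is not globally a hypersurface, so your adjunction identity $\can Y \isom \O Y(d-n-2)$ has no meaning here.) What the mechanism gives is $K_Y = \pi^*\big(K_X + (p-1)L\big)$ together with an injection $\pi^*(K_X + pL) \hookrightarrow \Omegar Y{n-1}$, so the achievable twist is pinned to pullbacks from $X$; the three cases come from choosing $L = \O X(2), \O X(3), \O X(1)$ and $\deg X = n-2p+3,\ n-3p+3,\ n-p+3$, and the thresholds $n \ge 2p-2,\ 3p-2,\ p-2$ are nothing more than the condition $\deg X \ge 1$. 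Your ``Koll\'ar's relation between $c$, $d$ and $p$'' is never written down, and it cannot be supplied within your framework: the only hypersurfaces in $\PP{n+1}$ that the $p$-th power trick produces directly are those with $X = \PP n$ and $L = \O{\PP n}(1)$, i.e.\ $\set{x_{n+1}^p = f(x_0,\dots,x_n)}$ of degree $p$, where the invertible subsheaf is $\O Y(p-n-1)$ and $\can Y = \O Y(p-n-2)$; one checks that this realizes neither case~\labelcref{Nakano Fano} nor~\labelcref{Nakano Fano II} for any $n$, and case~\labelcref{Nakano CY} only at the boundary value $n = p-2$. So the constraints $n \ge 2p-2$, etc., in your write-up are asserted rather than obtained, and the flexibility of taking the \emph{base} $X$ to be a hypersurface of positive degree --- whence $Y$ is not a hypersurface in $\PP{n+1}$ --- is essential and missing.

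Your treatment of the singularities is fine in outline and close to the paper's: genericity of $s$ (guaranteed by second-order global generation of $L^p$) yields the local normal form $y^p = x_{n-1}x_n + f_2(x_1,\dots,x_{n-2}) + \text{(higher order)}$, from which isolatedness, canonicity (terminality for $n \ge 3$), $F$-purity via Fedder, and strong $F$-regularity for $n \ge 3$ are read off. But the ``tension'' you anticipate between mild singularities and the cohomological nonvanishing is not where the difficulty lies: the singularities come for free from genericity and do not interact with the choice of degrees, while the numerical thresholds arise solely from keeping $\deg X \ge 1$ once the desired value of $K_X + (p-1)L$ is prescribed.
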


In \cref{ext cones}, we will turn our attention to cones over projective varieties and study when the \lext holds for such spaces.
The conclusion is that cones over the examples from \cref{nakano fail} are sufficient to prove \cref{lext p fail}, which is accomplished in \cref{proof lext p fail}.

\subsection{Koll\'ar's construction} \label{kollar construction}

Koll\'ar's method is quite flexible in the sense that it does not rely on resolution of singularities and gives very good control on the canonical divisor of the resulting example.
On the other hand, it only works in dimensions that satisfy a certain lower bound depending on the characteristic, and the spaces obtained are virtually never smooth.
Also, the violation of Nakano vanishing is only guaranteed in degree $n - 1$, where $n$ is the dimension.

Let $X$ be an $n$-dimensional smooth projective variety over an algebraically closed field of characteristic $p > 0$ and $L$ a line bundle on $X$.
Assume that $L^p$ is ``globally generated to second order'' in the sense that the restriction map
\[ \HH0.X.L^p. \lto L^p \tensor_{\O X} \!\left( \!\factor{\O X}{\frm_x^3} \right) \]
is surjective for every (closed) point $x \in X$ with ideal sheaf $\frm_x \subset \O X$.
Choose a general section $s \in \HH0.X.L^p.$ and consider the cover
\[ Y \defn X[{\sqrt[p]s}] \xrightarrow{\hspace{.75em}\pi\hspace{.75em}} X \]
as before.
By~\cite[(14.2)]{Kol95nh} there is a short exact sequence
\begin{equation} \label{1901}
0 \lto \pi^* \coker \underbrace{\left[ L^{-p} \xrightarrow{\d s} \Omegap X1 \right]}_{\substack{\text{const.~rank one} \\[.3ex] \text{off small subset}}} \lto \Omegap Y1 \lto \pi^* L\inv \lto 0.
\end{equation}
Taking determinants, we see that $K_Y = \pi^* \big( K_X + (p - 1) L \big)$.
On the other hand, the $(n - 1)$-th reflexive wedge power of the first map in~\labelcref{1901} shows that
\begin{equation} \label{1921}
\HH0.Y.\Omegar Y{n - 1} \tensor \pi^* (K_X + pL)\inv. \ne 0.
\end{equation}
Thus we obtain interesting examples if $K_X + pL$ is ample, but $K_X + (p - 1)L$ is not.

\begin{proof}[Proof of \cref{nakano fail}]
Let $X \subset \PP{n + 1}$ be a smooth hypersurface of degree $d = n - 2p + 3 \ge 1$, and take $L = \O X(2)$.
The global generation hypothesis on $L^p$ is automatically satisfied, hence we may construct $\pi \from Y \to X$ as above.
Then $Y$ is Fano since
\[ K_X + (p - 1) L = \O X \big( \! - \! (n + 2) + d + 2(p - 1) \big) = \O X( - 1) \]
is anti-ample.
On the other hand, $K_X + pL = \O X(1) = \big( K_X + (p - 1) L \big)\inv$ is ample and so by~\labelcref{1921}, the variety $Y$ violates Nakano vanishing in the required form.
By~\cite[(20.3), (22.1)]{Kol95nh}, the singularities of $Y$ are locally of the form
\begin{equation} \label{1880}
y^p = \underbrace{x_{n - 1} x_n + f_2(x_1, \dots, x_{n - 2})}_{\text{non-degenerate quadric}} \; + \; \text{(higher-order terms w.r.t.~$x$)}.
\end{equation}
Using this description, it can be checked that $Y$ has only isolated canonical hypersurface singularities, which are terminal for $n \ge 3$.
This proves~\labelcref{Nakano Fano}.

The argument for~\labelcref{Nakano Fano II} is very similar.
We start with $X$ a smooth hypersurface of degree $d = n - 3p + 3 \ge 1$ and $L = \O X(3)$.
Then
\[ K_X + (p - 1) L = \O X \big( \! - \! (n + 2) + d + 3(p - 1) \big) = \O X( - 2) \]
and $K_X + pL = \O X(1)$.
Again we conclude by~\labelcref{1921}.

For~\labelcref{Nakano CY}, we tweak the numbers once more.
Let $X$ be of degree $d = n - p + 3 \ge 1$ and $L = \O X(1)$.
Then
\[ K_X + (p - 1) L = \O X \big( \! - \! (n + 2) + d + (p - 1) \big) = \O X, \]
so $\can Y \isom \O Y$, and $K_X + pL = \O X(1)$.

The claim about $F$-purity can likewise be checked using~\labelcref{1880} and Fedder's criterion~\cite{Fedder83}.
If $n \ge 3$, then even~\labelcref{1880} multiplied by the non-unit $x_1$ is $F$-pure and so $Y$ is strongly $F$-regular.
Note also that a strongly $F$-regular Gorenstein singularity is automatically canonical, therefore this provides an alternative proof of $Y$ being canonical.
\end{proof}

\begin{rem}
One might be tempted to try and construct lower-dimensional examples by starting with a more interesting $X$ than just a hypersurface in $\PP{n + 1}$.
This, however, is not possible because the Fano index of $X$ is always $\le \dim X + 1$ by~\cite[Ch.~V, Thm.~1.6]{Kollar96}.
\end{rem}

\subsection{Extension properties on cones} \label{ext cones}

Fix an integer $n \ge 2$, a smooth projective variety $Y$ with $\dim Y = n - 1$, and an ample line bundle $L$ on $Y$.
Following~\cite[Ch.~3.1]{Kollar13}, let
\[ X \defn \Spec \bigoplus_{m \ge 0} \HH0.Y.L^m. \]
be the affine cone over $(Y, L)$.
Blowing up the vertex gives a log resolution $\pi \from \wt X \to X$, where $\wt X$ is the total space of the line bundle $L\inv$ and the exceptional locus $E$ is the zero section of $L\inv$.
In particular, there is an affine map $r \from \wt X \to Y$, which maps $E$ isomorphically onto $Y$.

For any integer $q \ge 0$, we will say that Condition~\labelcref{kan}$_q$ holds if
\begin{equation} \label{kan}
\tag{$*$}
\HH0.Y.\Omegap Yq \tensor L^{-m}. = 0 \qquad \text{for all $m \ge 1$.}
\end{equation}
Note that~\labelcref{kan}$_q$ always holds in any of the following cases: $q = 0$, $q \ge n$, or if $L$ is sufficiently ample.
In characteristic zero, \labelcref{kan}$_q$ holds for any $q \ne n - 1$ by Nakano vanishing.

With this notation in place, we have the following result.
It should be compared to the non-logarithmic, characteristic zero version in~\cite[Prop.~B.2]{KebekusSchnell18}.

\begin{prp}[\lext on cones] \label{lext cones}
Notation as above. Then the following equivalences hold:
\begin{enumerate}
\item $X$ satisfies the \lext for $1$-forms \gdw~\labelcref{kan}$_1$ holds.
\item $X$ satisfies the \lext for $n$-forms \gdw~\labelcref{kan}$_{n-1}$ holds.
\end{enumerate}
More generally, for arbitrary values of $q$ we have the following:
\begin{enumerate}
\item If~\labelcref{kan}$_q$ and~\labelcref{kan}$_{q-1}$ hold, then $X$ satisfies the \lext for $q$-forms.
\item\label{1968} Conversely, if $X$ satisfies the \lext for $q$-forms, then~\labelcref{kan}$_q$ holds.
If in addition $n \ge 3$ and $L$ is sufficiently ample, then also~\labelcref{kan}$_{q-1}$ holds.
\end{enumerate}
\end{prp}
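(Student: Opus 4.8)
The plan is to reduce everything to a $\mathbb{G}_m$-equivariant computation on the total space $\wt X$ of $L\inv$ and ultimately to one cohomology exact sequence on $Y$. First I would note that, since $X$ is affine and $U\defn\wt X\setminus E$ is exactly its smooth locus (a $\mathbb{G}_m$-torsor over $Y$), and since $\Omegal{\wt X}qE$ is locally free, the \lext for $q$-forms holds precisely when the inclusion $\pi_*\Omegal{\wt X}qE\hookrightarrow\Omegar Xq$ is onto on global sections, i.e.
\[ H^0\big(\wt X,\Omegal{\wt X}qE\big) = H^0\big(U,\Omegap Uq\big). \]
Both sides are graded over the coordinate ring $R$ of $X$, and a short check with the $\mathbb{G}_m$-action identifies the left-hand side with the part of the right-hand side of non-negative weight (a logarithmic form along the zero section $E$ can have no component of negative weight). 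So the reformulated goal is: \emph{the \lext for $q$-forms holds exactly when $H^0(U,\Omegap Uq)$ has no summand of negative weight.}

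Next I would introduce the logarithmic relative cotangent sequence of the affine map $r\from\wt X\to Y$ with respect to the relative divisor $E$, namely $0\to r^*\Omegap Y1\to\Omegal{\wt X}1E\to\O{\wt X}\to 0$ with the quotient generated by $\d t/t$, and take its $q$-th exterior power:
\[ 0\lto r^*\Omegap Yq\lto\Omegal{\wt X}qE\lto r^*\Omegap Y{q-1}\lto 0. \]
Its restriction to $U$ is the analogous sequence for the torsor $r|_U\from U\to Y$. Since $r$ and $r|_U$ are affine, I can push forward and split into $\mathbb{G}_m$-weights; in weight $m$ this is, on $Y$, a short exact sequence with sub $\Omegap Yq\tensor L^m$ and quotient $\Omegap Y{q-1}\tensor L^m$ (with $m\ge 0$ over $\wt X$ and $m\in\Z$ over $U$). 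Taking cohomology yields, for every $m$,
\[ 0\to H^0\big(Y,\Omegap Yq\tensor L^m\big)\to H^0(U,\Omegap Uq)_m\to H^0\big(Y,\Omegap Y{q-1}\tensor L^m\big)\xrightarrow{\ \delta_m\ }H^1\big(Y,\Omegap Yq\tensor L^m\big). \]

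From this the assertions fall out. For $m<0$ the leftmost term is what \labelcref{kan}$_q$ requires to vanish, and the image of the third map lies in $H^0(Y,\Omegap Y{q-1}\tensor L^m)$, which \labelcref{kan}$_{q-1}$ requires to vanish; hence if both conditions hold, $H^0(U,\Omegap Uq)_m=0$ for all $m<0$ and the \lext for $q$-forms follows (the general sufficiency statement). Conversely the \lext for $q$-forms forces $H^0(U,\Omegap Uq)_m=0$ for $m<0$, whence the leftmost injection gives \labelcref{kan}$_q$ (first half of the general necessity statement). For $q=1$ the third term equals $H^0(Y,L^m)=0$ for $m<0$ by ampleness, so $H^0(U,\Omegap U1)_m=H^0(Y,\Omegap Y1\tensor L^m)$ and the first equivalence follows; for $q=n$, $\Omegap Yn=0$ because $\dim Y=n-1$, so both the leftmost term and the target of $\delta_m$ vanish and $H^0(U,\Omegap Un)_m=H^0(Y,\Omegap Y{n-1}\tensor L^m)$, giving the second equivalence. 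Finally, for the remaining clause I would invoke Serre duality on the $(n-1)$-fold $Y$ to rewrite $H^1(Y,\Omegap Yq\tensor L^m)\isom H^{n-2}(Y,\Omegap Y{n-1-q}\tensor L^{-m})\dual$; for $m<0$ this vanishes when $n\ge3$ and $L$ is a high enough power of an ample line bundle (Serre vanishing, using $n-2\ge1$), so $\delta_m=0$ for $m<0$, the displayed sequence is short exact there, and the vanishing of $H^0(U,\Omegap Uq)_m$ now also forces \labelcref{kan}$_{q-1}$.

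The step I expect to need the most care is the $\mathbb{G}_m$-bookkeeping of the first two paragraphs: verifying precisely that $H^0(\wt X,\Omegal{\wt X}qE)$ is the non-negative-weight part of $H^0(U,\Omegap Uq)$, and that the weight-$m$ push-forward of the log cotangent sequence is genuinely the $L^m$-twist of the one on $Y$. Neither is deep. It is perhaps pleasant that the precise shape of the connecting map $\delta_m$ — some multiple of the Atiyah class of $L$ — is never needed; this is exactly why the two extremal cases $q=1$ and $q=n$ come out with no hypotheses on $n$ or $L$.
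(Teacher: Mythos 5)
Your proposal is correct and follows essentially the same route as the paper: the exterior powers of the (log) relative cotangent sequence for $r \from \wt X \to Y$, the weight decomposition coming from the affine pushforwards $r_*$ and $r'_*$, the resulting long exact cohomology sequences, and Serre duality plus Serre vanishing (using $n - 2 \ge 1$) for the final clause. The only difference is organizational: you chase the four-term sequence weight by weight, whereas the paper assembles the same data into one two-row diagram with injective comparison maps $\alpha, \beta, \gamma, \delta$ and reads off the equivalences from there.
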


\begin{proof}
The sequence of relative differentials for $r$ reads
\begin{equation} \label{2001}
0 \lto r^* \Omegap Y1 \lto \Omegap{\wt X}1 \lto r^* L \lto 0
\end{equation}
and its logarithmic version is
\begin{equation} \label{2003}
0 \lto r^* \Omegap Y1 \lto \Omegal{\wt X}1E \lto r^* L (E) \lto 0.
\end{equation}
For~\labelcref{2003}, choose a system of local parameters $y_1, \dots, y_{n-1}$ of $Y$ and let $t$~be a nowhere vanishing local section of $L$, considered as a fibrewise linear coordinate on~$\wt X$.
Then the middle term of~\labelcref{2003} is locally freely generated by
\[ r^* \d y_1, \, \dots, \, r^* \d y_{n-1}, \, \d t/t, \]
and the first map is the inclusion of the subsheaf generated by the $r^* \d y_i$.
Consequently, the quotient sheaf is invertible, locally generated by $\d t /t$.
Since $E = \set{ t = 0 }$, this shows that the quotient is isomorphic to $r^* L (E)$.

Now~\labelcref{2001} and~\labelcref{2003} sit inside the diagram shown in Figure~\labelcref{1926} \vpageref{1926}.
\begin{figure}[!t]
\centerline{
\xymatrix@R=4ex{
& & 0 \ar[d] & 0 \ar[d] \\
0 \ar[r] & r^* \Omegap Y1 \ar[r] \ar@{=}[d] & \Omegap{\wt X}1 \ar[r] \ar[d] & r^* L \ar[d] \ar[r] & 0 \\
0 \ar[r] & r^* \Omegap Y1 \ar[r] & \Omegal{\wt X}1E \ar[r] \ar^-{\res_E}[d] & r^*L (E) \ar[r] \ar[d] & 0 \\
& & \O E \ar@{=}[r] \ar[d] & \O E \ar[d] \\
& & 0 & 0
} }
\caption{Relative (log) differential sequences for the map $r$.}\label{1926}
\end{figure}
Also, for forms of higher degree, from~\labelcref{2003} we get~\cite[Ch.~II, Ex.~5.16]{Har77}
\begin{equation} \label{1920}
0 \lto r^* \Omegap Yq \lto \Omegal{\wt X}qE \lto r^* \big( \Omegap Y{q-1} \tensor L \big) (E) \lto 0.
\end{equation}
Recalling that both $r$ and its restriction $r' \defn r\big|_{\wt X \setminus E}$ are affine, with
\begin{align*}
r_* \O{\wt X} & = \ \bigoplus_{m \ge 0} L^m, \\
r_* \O{\wt X}(E) & = \bigoplus_{m \ge -1} L^m, \quad \text{and} \\
r'_* \O{\wt X \setminus E} & = \ \bigoplus_{m \in \Z} L^m,
\end{align*}
from~\labelcref{1920} we obtain the following diagram with exact rows and injective vertical arrows:
\[ \begin{tikzpicture}[baseline= (a).base]
\node[scale=0.845] (a) at (0,0){
\begin{tikzcd}[column sep=small,row sep=small]
	0 \rar & \displaystyle \bigoplus_{m \ge 0} \mathrm H^0(\Omegap Yq \tensor L^m)
		\rar \dar[hookrightarrow,start anchor={[yshift=2.2ex]},end anchor={[yshift=-0.3ex]}]{\alpha}
			& \HH0.\wt X.\Omegal{\wt X}qE.
		\rar \dar[hookrightarrow,end anchor={[yshift=-.55ex]}]{\beta} & \displaystyle \bigoplus_{m \ge 0} \mathrm H^0(\Omegap Y{q-1} \tensor L^m)
		\rar \dar[hookrightarrow,start anchor={[yshift=2.2ex]},end anchor={[yshift=-0.3ex]}]{\gamma}
			& \displaystyle \bigoplus_{m \ge 0} \mathrm H^1(\Omegap Yq \tensor L^m)
		\dar[hookrightarrow,start anchor={[yshift=2.2ex]},end anchor={[yshift=-0.3ex]}]{\delta} \\
	0 \rar & \displaystyle \bigoplus_{m \in \Z} \mathrm H^0(\Omegap Yq \tensor L^m)
		\rar & \HH0.\wt X \setminus E.\Omegap{\wt X}q.
		\rar & \displaystyle \bigoplus_{m \in \Z} \mathrm H^0(\Omegap Y{q-1} \tensor L^m)
		\rar & \displaystyle \bigoplus_{m \in \Z} \mathrm H^1(\Omegap Yq \tensor L^m)
\end{tikzcd}
};
\end{tikzpicture}
\]
It is clear that $\alpha$ is an isomorphism \gdw~\labelcref{kan}$_q$ holds, $\beta$ is an isomorphism \gdw\ the \lext for $q$-forms holds on $X$, and $\gamma$ is an isomorphism \gdw~\labelcref{kan}$_{q-1}$ holds.
Furthermore, if $n \ge 3$ and $L$ is sufficiently ample then $\delta$ is an isomorphism by Serre vanishing and Serre duality.
All claims thus follow from straightforward diagram chases (cf.~\cite[Lemma~B.2]{GKKP11}).
\end{proof}

\subsection{Proof of \cref{lext p fail}} \label{proof lext p fail}

With all preliminaries in place, the construction of counterexamples to the \lext becomes very easy.
Take $(Y, L)$ as in~\labelcref{Nakano CY}, and let $X$ be the affine cone over $(Y, L)$.
Blowing up the vertex gives an exceptional divisor of discrepancy $-1$ because $\can Y \isom \O Y$.
The result is the total space of $L\inv$, which has canonical singularities just as $Y$.
We conclude that $X$ is log canonical.
By~\labelcref{1968}, the \lext for $(n - 2)$-forms does not hold on $X$.
This proves~\labelcref{lext p fail lc}.

For~\labelcref{lext p fail can}, we use the Fano variety $Y$ from~\labelcref{Nakano Fano} instead.
In this case, $X$ is the cone over $(Y, \can Y\inv)$.
A calculation shows that the first discrepancy is zero.
Hence, since $Y$ has canonical singularities, so does $X$.
The \lext fails for the same reason as above.

For~\labelcref{lext p fail term}, we appeal to~\labelcref{Nakano Fano II}, i.e.~the cone $X$ is taken with respect to a square root of $\can Y\inv$.
In this case the first discrepancy is equal to one.
Since $\dim Y \ge 3p - 2 \ge 4$, we know that $Y$ has only terminal singularities and then the same is true of $X$.

In each case, a log resolution of $X$ can be obtained by first blowing up the vertex of the cone and then pulling back everything along a resolution of $Y$, which exists by~\cite[\S 21]{Kol95nh}. \qed

\providecommand{\bysame}{\leavevmode\hbox to3em{\hrulefill}\thinspace}
\providecommand{\MR}{\relax\ifhmode\unskip\space\fi MR}
% \MRhref is called by the amsart/book/proc definition of \MR.
\providecommand{\MRhref}[2]{%
  \href{http://www.ams.org/mathscinet-getitem?mr=#1}{#2}
}
\providecommand{\href}[2]{#2}

\end{document}